\pgfplotsset{compat=newest}
\newtheorem{assumption}{Assumption}
\crefname{assumption}{Assumption}{Assumptions}
\newtheorem{lemma}{Lemma}
\newtheorem{definition}{Definition}
\newtheorem{theorem}{Theorem}
\newtheorem{remark}{Remark}
\newcommand{\normv}[1]{|\!|\!|#1|\!|\!|_v}
\newcommand{\normvp}[1]{|\!|\!|#1|\!|\!|_{v'}}
\newcommand{\normL}[3]{\norm{#2}_{L^{#1}(#3)}}
\newcommand{\normpbar}[1]{\norm{#1}_p}
\newcommand{\normp}[1]{|\!|\!|#1|\!|\!|_p}
\newcommand{\norml}[1]{\norm{#1}_{\ell^2}}
\newcommand{\normH}[3]{\norm{#2}_{H^{#1}(#3)}}
\newcommand{\seminormH}[3]{\envert{#2}_{H^{#1}(#3)}}
\newcommand{\sumK}{\sum_{K \in \mathcal{T}_h}}
\title{Design and analysis of an exactly divergence-free hybridized
discontinuous Galerkin method for incompressible flows on meshes with
quadrilateral cells}
\author[1]{Joseph P.~Dean\thanks{\url{jpd62@cam.ac.uk},
\url{https://orcid.org/0000-0001-7499-3373}}}
\author[2]{Sander Rhebergen\thanks{\url{srheberg@uwaterloo.ca}
\url{https://orcid.org/0000-0001-6036-0356}}}
\author[1]{Garth N.~Wells\thanks{\url{gnw20@cam.ac.uk},
\url{https://orcid.org/0000-0001-5291-7951}}}
\affil[1]{Department of Engineering, University of Cambridge, United Kingdom}
\affil[2]{Department of Applied Mathematics, University of Waterloo, Canada}
\date{}
\begin{document}
\maketitle
\begin{abstract}
  \noindent
  We generalise a hybridized discontinuous Galerkin method for
  incompressible flow problems to non-affine cells, showing that with a
  suitable element mapping the generalised method preserves a key
  invariance property that eludes most methods, namely that any
  irrotational component of the prescribed force is exactly balanced by
  the pressure gradient and does not affect the velocity field. This
  invariance property can be preserved in the discrete problem if the
  incompressibility constraint is satisfied in a sufficiently strong
  sense. We derive sufficient conditions to guarantee discretely
  divergence-free functions are exactly divergence-free and give
  examples of divergence-free finite elements on meshes with triangular,
  quadrilateral, tetrahedral, or hexahedral cells generated by a
  (possibly non-affine) map from their respective reference cells. In
  the case of quadrilateral cells, we prove an optimal error estimate
  for the velocity field that does not depend on the pressure
  approximation. Our analysis is supported by numerical results.
\end{abstract}
\section{Introduction}
\label{sec:introduction}

Recently, there has been significant interest in numerical methods for
incompressible flow problems that preserve a fundamental invariance
property of the Stokes and Navier-Stokes equations; it can be shown
using a Helmholtz decomposition that any modification to the
irrotational component of the applied force is exactly balanced by the
pressure gradient and does not affect the velocity field
\cite{Linke2014,John2017}. Most of the classical mixed methods for
incompressible flow, including the Taylor--Hood \cite{Hood1974}, MINI
\cite{Arnold1984}, and Crouzeix--Raviart \cite{Crouzeix1973} elements,
do not preserve this property. This is because the divergence-free
constraint is only enforced approximately, and therefore discretely
divergence-free vector fields are not $L^2$-orthogonal to irrotational
fields, resulting in poor momentum balance i.e.~irrotational forces can
drive spurious flow (see \cite{Linke2014,John2017} for details). These
methods are not \emph{pressure robust} in the sense that the pressure
field can pollute the velocity approximation; error estimates for the
velocity field depend on the approximation error of the pressure field
scaled by the inverse of the viscosity. Hence, if the viscosity is small
or if the pressure field is poorly approximated, the error in the
velocity field can be large. This was demonstrated for colliding flow in
a cross-shaped domain by \citet{Linke2009}. Pressure robustness can also
be important to compute accurate solutions to buoyancy-driven flows and
fluids in hydrostatic equilibrium \cite{John2017}. A comprehensive
review of the role the divergence constraint plays in discretisations of
incompressible flow problems can be found in~\citet{John2017}.

Amongst the approaches to preserve the invariance property and obtain a
pressure-robust method, one is to enforce the divergence-free constraint
exactly in the discrete problem~\cite{John2017}. There are also other
benefits to using schemes that provide divergence-free velocity fields.
For instance, if mass is not conserved exactly, finite element methods
for the incompressible Navier--Stokes equations are typically not energy
stable~\cite{Evans2011}. Moreover, a divergence-free velocity field can
help avoid spurious results and instabilities when solving transport
equations (such as in turbulence models \cite{Peters2019}). Furthermore,
for applications involving particle advection, such as particle-mesh
methods or particle tracing, an initially uniform particle distribution
does not remain uniform as time progresses unless the divergence
constraint is satisfied exactly; instead, the particles clump together
in a non-physical manner (see~\cite{Maljaars2019}).

In the case of conforming mixed methods, finite element exterior
calculus \cite{Arnold2006} has proved to be a valuable tool
\cite{John2017} for constructing divergence-free schemes. Several
methods can be viewed in this framework, including the Scott--Vogelius
element \cite{Scott1985,Scott1985b} on barycentric-refined meshes and
the Guzm\'an--Neilan element \cite{Guzman:2014a,Guzman:2014b}. However,
methods of this variety typically require either high-order polynomials,
enriched spaces, special meshes, or tend to be challenging to extend to
three dimensions \cite{John2017,Neilan2018} due to the smoothness of the
Stokes complex \cite{John2017}. In general, it is difficult to balance
stability and incompressibility when restricted to conforming function
spaces~\cite{Qin1994}.

To balance the demands of stability and incompressibility, one approach
is non-conforming methods. Discontinuous Galerkin (DG) methods using
only $L^2$-conforming approximation spaces combined with efficient
(element-wise) post-processing techniques can yield divergence-free
velocity approximations, see \cite{Cockburn2004}. Post-processing can be
avoided if the finite element velocity space is instead
$H(\rm{div})$-conforming, as detailed in
\citet{Cockburn2007,Cockburn2004} and~\citet{Wang2007a}. However, due to
the large number of degrees of freedom introduced, DG methods are
relatively expensive compared to conforming methods. This motivated the
development of hybrid DG (HDG) methods which, in the spirit of hybrid
mixed methods \cite{Boffi2013}, introduce functions defined only over
the facets of the mesh and use static condensation to eliminate the cell
degrees-of-freedom. This reduces the number of globally coupled degrees
of freedom considerably, especially for high-order schemes
\cite{Cockburn2009b}. An HDG method for the Stokes equations was
introduced by \citet{Cockburn2009a} and is based on a
vorticity-velocity-pressure formulation. \citet{Lehrenfeld2016} present
an HDG method for the incompressible Navier--Stokes equations using an
$H(\rm{div})$-conforming finite element space for the velocity field. In
their scheme, only the tangential component of the velocity is
`hybridized'. \citet{Rhebergen2018a,Rhebergen2017,Rhebergen2020} enforce
continuity of the normal component via hybridization with a simple
modification of the scheme in \cite{Labeur2012}. The facet pressure
space is chosen to ensure that discretely divergence-free functions are
$H(\rm{div})$-conforming, otherwise pressure robustness is lost
\cite{Rhebergen2020}. Other examples of HDG methods for the Stokes
equations include \cite{Carrero2005a,Cockburn2005c,Cockburn2005d}.

Most of the analysis of divergence-free HDG schemes in the literature is
limited to meshes made of affine cells, i.e.~flat-faced simplices,
parallelograms, or parallelepipeds. The method in \cite{Rhebergen2018a}
only yields a divergence-free velocity field when the mesh is made of
affine simplices. It would clearly be advantageous for complex, curved
geometries to preserve pressure robustness for higher-order geometric
maps, and to be able to use tensor product cells for boundary layers and
to apply fast integration techniques. Examples of other divergence-free
methods for non-affine cells include \citet{Neilan2020}, who analyse a
Scott--Vogelius isoparametric finite element in two dimensions and
\citet{Evans2011}, who exploits the smoothness of B-splines on
single-patch domains to discretise a Stokes complex, yielding an inf-sup
stable and divergence-free scheme. \citet{Neilan2018} introduce a
divergence-free finite element pair for the Stokes problem that is
stable and conforming on shape-regular meshes made of flat-faced
quadrilateral cells. Piecewise quadratic and piecewise constant
polynomials are used for the velocity and pressure spaces respectively,
and after post-processing, the approximate velocity and pressure fields
are both second-order accurate. Methods that use
$H(\mathrm{div})$-conforming elements for the velocity space, such as
\cite{Cockburn2004,Cockburn2007,Lehrenfeld2016,Schroeder2018}, can also
provide exactly divergence-free solutions on non-affine cells.

This work extends and generalises the HDG method from
\cite{Rhebergen2018a,Rhebergen2020} to non-affine and non-simplex cells
whilst preserving the invariance property. Detailed analysis focuses on
flat-faced quadrilateral cells, but numerical examples are presented for
a range of non-affine cell types. We focus on the Stokes equations, but
the results can easily be generalised to the Navier--Stokes equations.
The layout of this paper is as follows: In \cref{sec:hdg_scheme} the
proposed HDG scheme is detailed, followed in
\cref{sec:ensuring_div_free} by the conditions which, if satisfied,
ensure that discretely divergence-free functions are exactly
divergence-free. In \cref{sec:press_rob_error_est}, a pressure robust
error estimate is derived, and in \cref{sec:proof_of_ass_on_quads} the
conditions required to apply our pressure robust error estimate are
proved for a divergence-free quadrilateral element. We present numerical
results to support our theoretical analysis in \cref{sec:num_ex} and
draw conclusions in \cref{sec:conclusion}.

\section{Stokes flow and the hybrid discontinuous Galerkin method}
\label{sec:hdg_scheme}

Consider viscous incompressible fluid flow in a domain $\Omega \subset
\mathbb{R}^d$, with $d \in \cbr{2, 3}$. The velocity $u: \Omega \to
\mathbb{R}^d$ and pressure $p: \Omega \to \mathbb{R}$ satisfy the Stokes
problem:
\begin{subequations}
  \label{eq:stokesproblem}
  \begin{alignat}{2}
    \label{eq:strong_momentum}
    - \nu \Delta u + \nabla p &= f &&\textnormal{ in } \Omega, \\
    \label{eq:strong_cont}
    \nabla \cdot u &= 0 &&\textnormal{ in } \Omega, \\
    \label{eq:diri_bound}
    u &= 0 &&\textnormal{ on } \partial \Omega,
  \end{alignat}
\end{subequations}
where $f: \Omega \to \mathbb{R}^d$ is a prescribed force and $\nu \in
\mathbb{R}^+$ is the kinematic viscosity. The pressure is determined
only up to a constant, so we seek pressure fields that satisfy the
condition
\begin{equation}
\label{eq:pressure_zero_mean}
    \int_\Omega p \dif x = 0.
\end{equation}

\subsection{Definitions}

Let $K$ denote a triangular/quadrilateral cell for $d = 2$ or a
tetrahedral/hexahedral cell for $d=3$ with non-zero diameter~$h_K$. Each
cell is assumed to be generated from a reference cell $\hat{K}$ via a
$\mathcal{C}^1$-diffeomorphism $T_K: \hat{K} \to K$ obtained from a
Lagrange geometric reference element~\citep{Ern2004}. Let the boundary
of each cell be denoted by $\partial K$ with outward unit normal $n$.
The domain $\Omega$ is partitioned into a mesh $\mathcal{T}_h = \{K\}$,
and $h$ is used to denote the maximum cell diameter in the mesh. The
geometric interpolation of $\Omega$, denoted $\Omega_h$, does not, in
general, coincide exactly with~$\Omega$~\citep{Ern2004}. However, for
simplicity, we assume that $\Omega_h = \Omega$. Let a facet in the mesh
be denoted by $F$, the set of all facets by $\mathcal{F}_h$, and let
$\Gamma_h \coloneqq \bigcup_{F \in \mathcal{F}_h} F$.

We now define the following finite element function spaces:
\begin{subequations}
  \begin{align}
    V_h &\coloneqq \cbr{ v_h \in [L^2(\Omega_h)]^d; \; v_h|_K
          \in V_h(K) \; \forall K \in \mathcal{T}_h},
          \label{eq:V_h_definition}
    \\
    \bar{V}_h &\coloneqq \cbr{ \bar{v}_h \in [L^2(\Gamma_h)]^d;
                \; \bar{v}_h|_F \in \bar{V}_h(F) \; \forall F \in \mathcal{F}_h, \;
                \bar{v}_h = 0 \textnormal{ on } \partial \Omega_h},
                \label{eq:Vbar_h_definition}
    \\
    Q_h &\coloneqq \cbr{ q_h \in L^2(\Omega_h); \; q_h|_K \in Q_h(K) \;
          \forall K \in \mathcal{T}_h},
    \\
    \bar{Q}_h &\coloneqq \cbr{ \bar{q}_h \in L_0^2(\Gamma_h); \;
                \bar{q}_h|_F \in \bar{Q}_h(F) \; \forall F \in \mathcal{F}_h}.
                \label{eq:Qbar_h_definition}
  \end{align}
\end{subequations}
The local spaces $V_h(K)$, $\bar{V}_h(F)$, $Q_h(K)$, and $\bar{Q}_h(F)$
are generated from their respective counterparts $V_h(\hat{K})$,
$\bar{V}_h(\hat{F})$, $Q_h(\hat{K})$, and $\bar{Q}_h(\hat{F})$ on the
reference element $\hat{K}$ or reference facet $\hat{F}$ using a linear
bijective mapping (see \cite[Proposition~1.61]{Ern2004}). For instance,
the space $V_h(K)$ is generated by
\begin{equation}
  V_h(K) = \cbr{\psi^{-1}_K(\hat{v}); \hat{v} \in V_h(\hat{K})},
  \label{eq:generate_V_h_on_K_from_Khat}
\end{equation}
where
\begin{equation}
  \psi_K : V_h(K) \to V_h(\hat{K}).
\end{equation}
We leave the reference spaces and the mapping $\psi_K$ undefined for
now. The spaces $V_h$ and $Q_h$ contain functions that are discontinuous
between cells, and similarly $\bar{V}_h$ and $\bar{Q}_h$ contain
functions that are discontinuous between facets. For convenience, we
introduce $\bm{V}_h \coloneqq V_h \times \bar{V}_h$ and $\bm{Q}_h
\coloneqq Q_h \times \bar{Q}_h$.

We follow Section~2 of \cite{Rhebergen2020} and introduce the extended
function spaces
\begin{subequations}
  \begin{align}
    V(h) &\coloneqq V_h + \sbr{H_0^1(\Omega_h)}^d \cap \sbr{H^2(\Omega_h)}^d,
    \\
    \bar{V}(h) &\coloneqq \bar{V}_h + [H_0^{3/2}(\Gamma_h)]^d,
    \\
    Q(h) &\coloneqq Q_h + L^2_0(\Omega_h) \cap H^1(\Omega_h),
    \\
    \bar{Q}(h) &\coloneqq \bar{Q}_h + H_0^{1/2}(\Gamma_h),
  \end{align}
\end{subequations}
where $L^2_0(\Omega_h)$ denotes the space of functions in
$L^2(\Omega_h)$ with zero mean. Further, we define the norms
\begin{equation}
  \normv{\bm{v}}^2 \coloneqq \sumK \normL{2}{\nabla v}{K}^2 +
  \sumK \frac{\alpha}{h_K}
  \normL{2}{\bar{v} - v}{\partial K}^2
\end{equation}
and
\begin{equation}
  \normvp{\bm{v}}^2 \coloneqq \normv{\bm{v}}^2 + \sumK
  \frac{h_K}{\alpha} \normL{2}{\partial_n v}{\partial K}^2
  \label{eq:norm_vp_def}
\end{equation}
on $V(h) \times \bar{V}(h)$, the norm
\begin{equation}
  \normpbar{\bar{q}}^2 \coloneqq \sumK h_K
  \normL{2}{\bar{q}}{\partial K}^2
\end{equation}
on $\bar{Q}(h)$, and the norm
\begin{equation}
  \normp{\bm{q}}^2 \coloneqq \normL{2}{q}{\Omega_h}^2 + \normpbar{\bar{q}}^2
\end{equation}
on $Q(h) \times \bar{Q}(h)$.

\subsection{Discrete problem}
\label{sec:discrete_problem}

The discrete problem consists of the following: given $f \in
\sbr{L^2(\Omega_h)}^{d}$, find $(\bm{u}_h, \bm{p}_h) \in \bm{V}_h \times
\bm{Q}_h$ such that
\begin{subequations}
  \label{eq:hdg_discrete_problem}
  \begin{alignat}{2}
    \label{eq:hdg_discrete_momentum}
    a_h(\bm{u}_h, \bm{v}_h) + b_h(v_h, \bm{p}_h) &= f_h(v_h)
    \quad &&\forall \bm{v}_h \in \bm{V}_h,
    \\
    \label{eq:hdg_discrete_continuity}
    b_h(u_h, \bm{q}_h) &= 0 \quad &&\forall \bm{q}_h \in \bm{Q}_h,
  \end{alignat}
\end{subequations}
where the bilinear forms are given by
\begin{align} \label{eq:a_h}
  \begin{split}
    a_h(\bm{u}_h, \bm{v}_h) \coloneqq &\sumK \int_K \nu \nabla u_h : \nabla v_h \dif x
    \\
    &- \sumK \int_{\partial K} \nu \del{(u_h - \bar{u}_h) \cdot \partial_nv_h
    + \partial_nu_h \cdot (v_h - \bar{v}_h)} \dif s
    \\
    &+ \sumK \int_{\partial K} \nu \frac{\alpha}{h_K} (u_h - \bar{u}_h)
    \cdot (v_h - \bar{v}_h) \dif s
  \end{split}
\end{align}
and
\begin{equation}
    b_h(v_h, \bm{p}_h) \coloneqq - \sumK \int_K p_h
    \nabla \cdot v_h \dif x + \sumK \int_{\partial K} v_h
    \cdot n \bar{p}_h \dif s,
\end{equation}
where $\alpha > 0$ is a penalty parameter. The linear form is defined as
\begin{equation}
  f_h(v_h) \coloneqq \int_{\Omega_h} f \cdot v_h \dif x.
\end{equation}

The above discrete problem is derived by posing mass and momentum
balances cell-wise, subject to `boundary conditions' provided by the
facet fields. Equations for the facet fields are obtained by enforcing
(weak) continuity of numerical fluxes across facets~\citep{Labeur2012}.
Critically, the numerical fluxes depend only on quantities that are
either \emph{local} to a particular cell or that live on the facet
shared by the cell and its neighbour. The cell fields communicate only
via the facet fields, meaning static condensation can be used to
eliminate the cell unknowns from the global system of equations.

\section{The invariance property and incompressibility}
\label{sec:ensuring_div_free}

There exists a fundamental invariance property of incompressible flow;
the irrotational component of the prescribed force $f \in
[L^2(\Omega)]^d$ affects only the pressure field, not the velocity
field. If $(u, p)$ solves the Stokes problem
\labelcref{eq:stokesproblem}, adding $\nabla \psi$ to the force $f$,
where $\psi \in H^1(\Omega) \setminus \mathbb{R}$, changes the solution
to $(u, p + \psi)$ \citep{John2017}. \emph{Pressure robust} methods
preserve this property in the sense that if $(u_h, p_h)$ solves the
discrete problem, then the above modification to the applied force
changes the discrete solution to $(u_h, p_h + \Pi_{Q_h} \psi)$, where
$\Pi_{Q_h}$ is the $L^2$-projection onto $Q_h$ (see \cite[Lemma
4.6]{John2017}). In this sense, contributions to the prescribed force
that only affect the pressure in the continuous problem also only affect
the pressure in the discrete problem~\citep{John2017}. As shown by
\citet{John2017}, for pressure robust methods the \emph{a priori} error
estimate for the velocity field does not depend on the pressure
approximation, in contrast to methods where the velocity error estimate
depends on the error in the pressure scaled by the inverse of the
viscosity. Pressure-robust methods are desirable since the pressure
field cannot pollute the approximation of the velocity field.

Preservation of the invariance property and pressure robustness are
intimately related to the sense in which the incompressibility
constraint is satisfied in the discrete problem.
\Cref{eq:hdg_discrete_continuity} constrains the discrete velocity
solution to lie in the space of \emph{discretely} divergence-free
functions,
\begin{equation}
  X_h \coloneqq \cbr{ v_h \in V_h; \; b_h(v_h, \bm{q}_h) = 0 \quad \forall
  \bm{q}_h \in \bm{Q}_h}.
\end{equation}
It is possible to construct pressure-robust methods by ensuring that
discretely divergence-free functions are also \emph{weakly}
divergence-free, the meaning of which we now make precise.

\begin{definition}[Weakly divergence-free]
  \label{def:weakly_div_free}
  A function $v \in H(\rm{div}; \Omega)$ is weakly divergence-free
  (written in short as $\nabla \cdot v = 0$) if
  \begin{equation}
      \int_\Omega v \cdot \nabla \psi \dif x = 0
        \quad \forall \psi \in C_0^\infty(\Omega).
  \end{equation}
\end{definition}
\Cref{def:weakly_div_free} implies $v$ is divergence-free in the
\emph{classical} sense in regular regions \emph{and} has continuous
normal components at their interfaces \cite{Bossavit1998}. We denote the
space of weakly divergence-free functions by
\begin{equation}
  X \coloneqq \left\{ v \in H(\textnormal{div}; \Omega); \nabla \cdot v
  = 0 \right\},
\end{equation}
where $\nabla \cdot v = 0$ is understood in the sense of
\cref{def:weakly_div_free}. For numerous common finite elements for
incompressible flows, the space of discretely divergence-free functions
is not a subset of the space of weakly divergence-free functions. In
some cases, e.g.~Taylor--Hood elements, the pressure space is not rich
enough to enforce the divergence constraint exactly. In other cases,
e.g.~standard discontinuous Galerkin methods, discretely divergence-free
functions may have discontinuous normal components.

For the discrete problem in \cref{sec:discrete_problem}, sufficient
conditions for the inclusion $X_h \subset X$ to be satisfied are:
\begin{enumerate}
  \item the property $v_h|_F \cdot n \in \bar{Q}_h(F) \ \forall v_h \in
      V_h(K)$, which ensures elements of $X_h$ have continuous normal
      components across facets; and
  \item the nesting $\nabla \cdot V_{h}(K) \subseteq Q_{h}(K)$, which
  ensures that elements of $X_h$ are divergence-free on cell interiors.
\end{enumerate}
For simplex cells, Lagrange elements have these sufficient properties
provided that the geometric map is affine, see~\citep{Rhebergen2018a}.
However, this is not the case when the geometric map is non-affine or
for non-simplex Lagrange elements. Indeed, in both of these cases,
simple calculations and numerical experiments confirm that the inclusion
$X_h \subset X$ does not hold. A similar issue affects the conforming
isoparametric Scott--Vogelius element, see~\cite{Neilan2020}.

We now demonstrate that, if functions are transformed between the
reference and physical elements in an appropriate manner, it is possible
to state sufficient conditions for the inclusion $X_h \subset X$ to hold
in terms of the relationships between the \emph{reference} finite
element function spaces instead of the \emph{physical} finite element
function spaces, and therefore independently of the geometric map $T_K$.
Key to this is the contravariant Piola transform defined by
\cite[eq.~(1.81)]{Ern2004}
\begin{equation} \label{eq:piola_transform}
    \psi_K(v) \coloneqq \det(J_K)J_K^{-1}(v \circ T_K),
\end{equation}
where $J_K$ is the Jacobian matrix of $T_K$. For an illustration of how
$\psi_K$ transforms vector fields, we refer the reader
to~\cite{Rognes2010}. The contravariant Piola transform preserves
divergence in the sense that
\cite[p.~2430]{Arnold2005}
\begin{equation} \label{eq:piola_preverves_div}
    \int_K \nabla \cdot v q \dif x =
    \int_{\hat{K}} \hat{\nabla} \cdot \hat{v} \hat{q} \dif \hat{x}
\end{equation}
and normal components in the sense that \cite[p.~2430]{Arnold2005}
\begin{equation} \label{eq:piola_preserves_normals}
    \int_{\partial K} v \cdot n q \dif s =
    \int_{\partial \hat{K}} \hat{v} \cdot \hat{n} \hat{q} \dif \hat{s},
\end{equation}
where $\hat{v}: \hat{K} \to \mathbb{R}^d$, $v = \psi_K^{-1}(\hat{v}): K
\to \mathbb{R}^d$, $\hat{q}: \hat{K} \to \mathbb{R}$, $q = \hat{q} \circ
T_K^{-1}: K \to \mathbb{R}$, and $\hat{n}$ denotes the unit outward
normal to $\partial \hat{K}$. Using the contravariant Piola transform to
construct $V_h$, it is possible to write sufficient conditions to ensure
$X_h \subset X$ that are independent of the geometric map $T_K$; we show
this in the following lemmas.
\begin{lemma} \label{lem:X_h_subset_Hdiv}
  Let $V_h(\hat{K}) \subset H(\textnormal{div};\hat{K})$ and
  $\bar{Q}_h(\hat{F})$ be finite dimensional polynomial spaces chosen
  such that
  \begin{equation} \label{eq:hdiv_conformity_condition_Khat}
    \bar{Q}_h(\hat{F}) \supseteq \cbr{\hat{v}_h|_{\hat{F}} \cdot \hat{n};
    \hat{v}_h \in V_h(\hat{K})}.
  \end{equation}
  Also let $V_h(K)$ be generated by
  \cref{eq:generate_V_h_on_K_from_Khat} with $\psi_K$ taken as the
  contravariant Piola transform (\cref{eq:piola_transform}) and generate
  $\bar{Q}_h(F)$ similarly but using the composition $\psi_F(\bar{q}_h)
  \coloneqq \bar{q}_h \circ T_K|_{\hat{F}}$. Then, discretely
  divergence-free functions have continuous normal components (i.e.~$X_h
  \subset H(\textnormal{div};\Omega_h)$).
\end{lemma}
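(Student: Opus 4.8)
The plan is to establish the stronger local statement that the normal component of any $v_h \in X_h$ is continuous across every interior facet, since a piecewise-$H(\textnormal{div})$ field with continuous normal traces lies in $H(\textnormal{div};\Omega_h)$. First I would extract the relevant orthogonality: taking $\bm{q}_h = (0,\bar{q}_h)$ in \cref{eq:hdg_discrete_continuity} kills the volume term in $b_h$ and leaves $\sumK \int_{\partial K} v_h \cdot n\, \bar{q}_h \dif s = 0$ for every $\bar{q}_h \in \bar{Q}_h$. Since each $\bar{q}_h$ is single-valued on facets and vanishes on $\partial\Omega_h$, I would regroup this cell-boundary sum as a sum over interior facets $F = \partial K^+ \cap \partial K^-$, collapsing it to $\sum_{F} \int_F (v_h^+\cdot n^+ + v_h^-\cdot n^-)\,\bar{q}_h \dif s = 0$; that is, the normal jump is $L^2(F)$-orthogonal to $\bar{Q}_h(F)$ on each $F$.

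The core of the argument is to show this orthogonality forces the jump to vanish, and the key device is the Piola normal-preservation identity \cref{eq:piola_preserves_normals}. I would transform each one-sided facet integral $\int_F v_h^\pm \cdot n^\pm\, \bar{q}_h \dif s$ to the corresponding reference facet. Because $V_h(K)$ is built with the contravariant Piola transform while $\bar{Q}_h(F)$ is built with the plain composition $\psi_F(\bar{q}_h)=\bar{q}_h\circ T_K|_{\hat{F}}$, the surface Jacobian relating $\dif s$ to $\dif\hat{s}$ cancels exactly between the velocity and the test function, and \cref{eq:piola_preserves_normals} applies verbatim to give $\int_F v_h^\pm\cdot n^\pm\,\bar{q}_h \dif s = \int_{\hat{F}} \hat{v}_h^\pm\cdot\hat{n}^\pm\,\hat{q}_h^\pm \dif\hat{s}$ with $\hat{q}_h^\pm = \bar{q}_h\circ T_{K^\pm}|_{\hat{F}} \in \bar{Q}_h(\hat{F})$. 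By the reference conformity hypothesis \cref{eq:hdiv_conformity_condition_Khat}, each $\hat{v}_h^\pm\cdot\hat{n}^\pm$ already lies in $\bar{Q}_h(\hat{F})$, so once the two sides are expressed over a common reference facet their combined normal trace is itself an element of $\bar{Q}_h(\hat{F})$; testing it against itself then yields a vanishing $L^2(\hat{F})$-norm and hence a zero jump, which is the claimed normal continuity.

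The step I expect to be the main obstacle is reconciling the two distinct reference facets $\hat{F}^+ = T_{K^+}^{-1}(F)$ and $\hat{F}^- = T_{K^-}^{-1}(F)$ produced by the adjacent cells, so that the two one-sided reference normal traces can be added and tested against a single $\bar{q}_h$. I would handle this by pulling the $K^-$ integral onto $\hat{F}^+$ through the facet-to-facet map $\Phi := (T_{K^-}|_{\hat{F}^-})^{-1}\circ (T_{K^+}|_{\hat{F}^+})$, noting that $\hat{q}_h^-\circ\Phi = \hat{q}_h^+$. For a conforming mesh whose neighbouring cells share the same geometric nodal data on $F$, this $\Phi$ is an affine symmetry of the reference facet with unit surface Jacobian, under which the polynomial space $\bar{Q}_h(\hat{F})$ is invariant; this is precisely where the finite-dimensional polynomial and conformity assumptions do the work, and where the non-affine geometry is tamed. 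Granting this, $(\hat{v}_h^-\cdot\hat{n}^-)\circ\Phi \in \bar{Q}_h(\hat{F})$, the combined trace $\hat{v}_h^+\cdot\hat{n}^+ + (\hat{v}_h^-\cdot\hat{n}^-)\circ\Phi$ is admissible as a test function, and the orthogonality closes the argument.
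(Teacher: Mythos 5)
Your proposal is correct and follows essentially the same route as the paper's (much terser) proof: localise the facet test function, pull both one-sided integrals back to the reference configuration with the Piola normal-trace identity \cref{eq:piola_preserves_normals}, and use the reference conformity condition \cref{eq:hdiv_conformity_condition_Khat} so that the combined normal trace can be tested against itself; your explicit reconciliation of the two reference facets via the affine identification map $\Phi$ is precisely the detail the paper compresses into ``transforming to the reference element''. One minor slip: functions in $\bar{Q}_h$ do not vanish on $\partial \Omega_h$ (that constraint is imposed on $\bar{V}_h$, not on the facet pressure space), but this is immaterial because your argument only requires test functions supported on a single interior facet, for which the boundary-facet terms never enter.
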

\begin{proof}
  Let $v_h \in X_h$, giving $b_h(v_h, \bm{q}_h) = 0 \; \forall \bm{q}_h
  \in \bm{Q}_h$ by definition. Set $\bm{q}_h = (0, \bar{q}_h)$ on a
  single interior facet $F$ and $\bm{q}_h = (0, 0)$ elsewhere. The
  result then follows from transforming to the reference element and
  using the fact that the contravariant Piola transform preserves normal
  traces.
\end{proof}

\begin{remark}
  Since we consider the Stokes problem subject to the homogeneous
  Dirichlet boundary condition given in \cref{eq:diri_bound}, provided
  that the requirements of \cref{lem:X_h_subset_Hdiv} hold, we in fact
  have that $X_h \subset H_0(\textnormal{div};\Omega_h)$, where
  \begin{equation}
    H_0(\textnormal{div};\Omega_h) \coloneqq \left\{
    v \in H(\textnormal{div};\Omega_h); \;
    v \cdot n|_{\partial \Omega_h} = 0
    \right\}.
  \end{equation}
  In other words, the normal component of any discretely divergence-free
  function is exactly zero on the domain boundary. This can be seen by
  considering a facet on the domain boundary and applying a similar
  argument to the proof of \cref{lem:X_h_subset_Hdiv}.
\end{remark}

\begin{lemma} \label{lem:X_h_subset_X}
  If the conditions of \cref{lem:X_h_subset_Hdiv} are satisfied and if
  $V_h(\hat{K})$ and $Q_h(\hat{K})$ are chosen such that
  \begin{equation} \label{eq:div_V_h_in_Q_h_on_Khat}
    \hat{\nabla} \cdot V_h(\hat{K}) \subseteq Q_h(\hat{K}),
  \end{equation}
  then discretely divergence-free functions are weakly divergence-free
  (i.e.~$X_h \subset X$).
\end{lemma}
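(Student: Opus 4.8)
The plan is to combine the $H(\textnormal{div})$-conformity already supplied by \cref{lem:X_h_subset_Hdiv} with the new nesting hypothesis to show that any $v_h \in X_h$ has vanishing divergence on every cell interior; together with continuous normal traces this is precisely the characterisation of weak divergence-freeness recorded after \cref{def:weakly_div_free}. Concretely, for any $\psi \in C_0^\infty(\Omega_h)$ I would integrate by parts cell-wise,
\begin{equation*}
  \int_{\Omega_h} v_h \cdot \nabla \psi \dif x = \sumK \del{-\int_K (\nabla \cdot v_h)\,\psi \dif x + \int_{\partial K} (v_h \cdot n)\,\psi \dif s},
\end{equation*}
and observe that the facet contributions cancel: across interior facets because $v_h \cdot n$ is single-valued by \cref{lem:X_h_subset_Hdiv} while $\psi$ is continuous, and on boundary facets because $\psi$ has compact support. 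It therefore suffices to prove $\nabla \cdot v_h = 0$ on each $K$.

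To obtain this, I would first localise the discrete continuity constraint. Choosing $\bm{q}_h = (q_h, 0)$ in \cref{eq:hdg_discrete_continuity} with $q_h \in Q_h$ supported on a single cell $K$ annihilates the facet integral in $b_h$, leaving $\int_K q_h (\nabla \cdot v_h) \dif x = 0$ for every $q_h \in Q_h(K)$; this localisation is legitimate precisely because $Q_h$ consists of functions discontinuous between cells.

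Next I would transfer this cell-wise orthogonality to the reference element. Writing $\hat{v}_h = \psi_K(v_h) \in V_h(\hat{K})$ and applying the divergence-preservation identity \cref{eq:piola_preverves_div}, the relation becomes $\int_{\hat{K}} \hat{q}_h (\hat{\nabla} \cdot \hat{v}_h) \dif \hat{x} = 0$ with $\hat{q}_h = q_h \circ T_K$. Since $Q_h(K)$ is generated from $Q_h(\hat{K})$ by the pullback $q_h = \hat{q}_h \circ T_K^{-1}$ compatible with \cref{eq:piola_preverves_div}, as $q_h$ ranges over $Q_h(K)$ the reference function $\hat{q}_h$ ranges over all of $Q_h(\hat{K})$, so $\hat{\nabla} \cdot \hat{v}_h$ is $L^2(\hat{K})$-orthogonal to the entire reference pressure space. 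The crux is then the nesting condition \cref{eq:div_V_h_in_Q_h_on_Khat}: it guarantees $\hat{\nabla} \cdot \hat{v}_h \in Q_h(\hat{K})$, so the admissible choice $\hat{q}_h = \hat{\nabla} \cdot \hat{v}_h$ yields $\int_{\hat{K}} (\hat{\nabla} \cdot \hat{v}_h)^2 \dif \hat{x} = 0$ and hence $\hat{\nabla} \cdot \hat{v}_h = 0$. Because the contravariant Piola transform sends a vanishing reference divergence to a vanishing physical divergence, $\nabla \cdot v_h = 0$ on $K$, which closes the argument.

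The main obstacle I anticipate is bookkeeping around the pressure transform rather than any analytic difficulty: the argument only closes if $Q_h(K)$ is mapped from $Q_h(\hat{K})$ by exactly the composition that makes \cref{eq:piola_preverves_div} hold, so that the reference test functions $\hat{q}_h$ sweep out all of $Q_h(\hat{K})$, and I would state this compatibility explicitly. The conceptual point — and what renders the conclusion independent of the possibly non-affine map $T_K$ — is that the reference-level nesting $\hat{\nabla} \cdot V_h(\hat{K}) \subseteq Q_h(\hat{K})$ is what permits testing against the divergence itself on the fixed reference cell.
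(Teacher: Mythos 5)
Your proposal is correct and follows essentially the same route as the paper's (much terser) proof: localise the discrete constraint by taking $\bm{q}_h = (q_h, 0)$ supported on a single cell, pull back to the reference element via the divergence-preservation property \cref{eq:piola_preverves_div} of the contravariant Piola transform, and use the nesting \cref{eq:div_V_h_in_Q_h_on_Khat} to test against the divergence itself. The details you fill in — the cell-wise integration by parts showing that $H(\textnormal{div})$-conformity plus cell-wise vanishing divergence gives weak divergence-freeness, and the compatibility of the pressure pullback with \cref{eq:piola_preverves_div} — are exactly what the paper leaves implicit.
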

\begin{proof}
  Let $v_h \in X_h$, thus $b_h(v_h, \bm{q}_h) = 0 \; \forall \bm{q}_h
  \in \bm{Q}_h$ by definition. Set $\bm{q}_h = (q_h, 0)$ on an element
  $K$ and $\bm{q}_h = (0, 0)$ elsewhere. The result then follows from
  transforming to the reference element and using the fact that the
  contravariant Piola transform preserves divergence.
\end{proof}

Using these results, we now give some examples of finite elements for
which discretely divergence-free functions are weakly divergence-free,
which will be referred to as ``divergence-free elements''. Let
$\mathbb{P}_k$ denote the space of polynomials of degree at most $k$. If
the mesh is made of (possibly curved) simplex cells, then choosing
\begin{equation*}
  V_h(\hat{K}) \coloneqq [\mathbb{P}_k(\hat{K})]^d, \quad
  \bar{V}_h(\hat{F}) \coloneqq [\mathbb{P}_k(\hat{F})]^d, \quad
  Q_h(\hat{K}) \coloneqq \mathbb{P}_{k-1}(\hat{K}), \quad
  \textnormal{and} \quad \bar{Q}_h(\hat{F}) \coloneqq \mathbb{P}_{k}(\hat{F})
\end{equation*}
implies that discretely divergence-free functions are exactly
divergence-free. Indeed, for any $\hat{v}_h \in
[\mathbb{P}_k(\hat{K})]^d$ we have $\hat{v}_h|_{\hat{F}} \cdot \hat{n}
\in \mathbb{P}_{k}(\hat{F})$ and $\nabla \cdot [\mathbb{P}_k(\hat{K})]^d
= \mathbb{P}_{k-1}(\hat{K})$; thus,
\cref{lem:X_h_subset_Hdiv,lem:X_h_subset_X} can be applied.

In the case of (possibly curved) quadrilateral and hexahedral cells,
following an approach similar to the divergence-conforming methods
\cite{Cockburn2007,Lehrenfeld2016} we choose
\begin{equation*}
  V_h(\hat{K}) \coloneqq \mathbb{RT}_k(\hat{K}), \quad
  \bar{V}_h(\hat{F}) \coloneqq [\mathbb{Q}_k(\hat{F})]^d, \quad
  Q_h(\hat{K}) \coloneqq \mathbb{Q}_{k}(\hat{K}), \quad
  \textnormal{and} \quad \bar{Q}_h(\hat{F}) \coloneqq
  \mathbb{Q}_{k}(\hat{F}),
\end{equation*}
where $\mathbb{RT}_k$ denotes the Raviart--Thomas \cite{Raviart1977}
space of index $k$, and $\mathbb{Q}_k$ denotes the space of polynomials
of degree at most $k$ in each variable. These spaces have the property
that for any $\hat{v}_h \in \mathbb{RT}_k(\hat{K})$, we have
$\hat{v}_h|_{\hat{F}} \cdot \hat{n} \in \mathbb{Q}_{k}(\hat{F})$ and
$\nabla \cdot \mathbb{RT}_k(\hat{K}) = \mathbb{Q}_k(\hat{K})$
\cite[p.~97]{Boffi2013}. Another divergence-free choice is to instead
take
\begin{equation*}
  V_h(\hat{K}) \coloneqq \mathbb{BDM}_{k}(\hat{K}) \quad
  \textnormal{and} \quad Q_h(\hat{K}) \coloneqq \mathbb{Q}_{k-1}(\hat{K})
\end{equation*}
with the same facet spaces, where $\mathbb{BDM}_{k}$ is the
Brezzi--Douglas--Marini \cite{Brezzi1985} space of index $k$.

For the seemingly natural choice of
\begin{equation*}
  V_h(\hat{K}) \coloneqq [\mathbb{Q}_k(\hat{K})]^d, \quad
  \bar{V}_h(\hat{F}) \coloneqq [\mathbb{Q}_k(\hat{F})]^d, \quad
  Q_h(\hat{K}) \coloneqq \mathbb{Q}_{k-1}(\hat{K}), \quad
  \textnormal{and} \quad \bar{Q}_h(\hat{F}) \coloneqq \mathbb{Q}_{k}(\hat{F}),
\end{equation*}
we have $\nabla \cdot [\mathbb{Q}_k(\hat{K})]^d \nsubseteq
\mathbb{Q}_{k-1}(\hat{K})$ and it is possible to find elements of $X_h$
that are not elements of $X$. As a simple example, take $k=1$ and
consider the function $v_h \coloneqq \psi_K^{-1}((\hat{x}_0 + c
\hat{x}_0 \hat{x}_1, 0)) \in V_h(K)$, where $c \coloneqq - |\hat{K}| /
\int_{\hat{K}} \hat{x}_1 \dif \hat{x}$ is a constant. It is
straightforward to verify that $v_h$ satisfies $\int_{K} q_h \nabla
\cdot v_h \dif x = 0$ for all $q_h \in Q_h(K)$ and thus is
\emph{discretely} divergence-free, however, it is clearly not
\emph{weakly} divergence-free.

\section{Pressure-robust error estimate}
\label{sec:press_rob_error_est}

In this section, we use standard arguments to derive a pressure-robust
error estimate for the method described in \cref{sec:discrete_problem}.
We begin by stating some assumptions to ensure that the discrete problem
is well-posed and that $\bm{V}_h$ and $\bm{Q}_h$ have suitable
approximation properties. The validity of these assumptions is discussed
in \cref{rem:validity_of_assumptions} below.

\begin{assumption}[Discretely divergence-free functions are exactly
    divergence-free] \label{ass:disc_div_free_exact_div_free}
  Assume that $X_h \subset X$ (see \cref{sec:ensuring_div_free}).
\end{assumption}

\begin{assumption}[Consistency] \label{ass:consistency}
  If $(u, p) \in ([H_0^1(\Omega)]^d \cap [H^2(\Omega)]^d) \times
  (L_0^2(\Omega) \cap H^1(\Omega))$ solves the Stokes problem
  \labelcref{eq:stokesproblem}, then letting $\bm{u} \coloneqq (u,
  u|_{\Gamma_h})$ and $\bm{p} \coloneqq (p, p|_{\Gamma_h})$, it holds
  that
  \begin{subequations}
    \begin{alignat}{2}
      a_h(\bm{u}, \bm{v}_h) + b_h(v_h, \bm{p}) &= f_h(v_h) \quad && \forall
      \bm{v}_h \in \bm{V}_h,
      \\
      b_h(u, \bm{q}_h) &= 0 \quad &&\forall \bm{q}_h \in \bm{Q}_h.
    \end{alignat}
  \end{subequations}
\end{assumption}

\begin{assumption}[Norm equivalence on $\bm{V}_h$] \label{ass:norm_equiv}
  There exists a constant $c$, independent of $h$, such that for all
  $\bm{v}_h \in \bm{V}_h$
  \begin{equation}
      \normv{\bm{v}_h} \leq \normvp{\bm{v}_h} \leq c
      \normv{\bm{v}_h}.
  \end{equation}
  In other words, the norms $\normv{\cdot}$ and $\normvp{\cdot}$ are
  equivalent on $\bm{V}_h$.
\end{assumption}

\begin{assumption}[Stability of $a_h$] \label{ass:stability_of_a_h}
  There exists a $\beta_v > 0$ that is independent of $h$ and a constant
  $\alpha_0 > 0$ such that for $\alpha > \alpha_0$
  \begin{equation}
      a_h(\bm{v}_h, \bm{v}_h) \geq \nu \beta_v \normv{\bm{v}_h}^2
      \quad \forall \bm{v}_h \in \bm{V}_h.
  \end{equation}
\end{assumption}
\begin{assumption}[Boundedness of $a_h$] \label{ass:boundedness_of_a_h}
  There exists a constant $C_a > 0$, independent of $h$, such that
  $\forall \bm{u} \in V(h) \times \bar{V}(h)$ and $\forall \bm{v}_h \in
  \bm{V}_h$
  \begin{equation}
    |a_h(\bm{u}, \bm{v}_h)| \leq \nu C_a
    \normvp{\bm{u}} \normv{\bm{v}_h}.
  \end{equation}
\end{assumption}

\begin{assumption}[Stability of $b_h$] \label{ass:stability_of_b_h}
  There exists a constant $\beta_p > 0$, independent of $h$, such that
  \begin{equation}
    \inf_{\bm{q}_h \in \bm{Q}_h} \sup_{\bm{v}_h \in \bm{V}_h}
    \frac{b_h(v_h, \bm{q}_h)}{\normv{\bm{v}_h} \normp{\bm{q}_h}} \geq \beta_p.
  \end{equation}
\end{assumption}

\begin{assumption}[Boundedness of $b_h$] \label{ass:boundedness_of_b_h}
  There exists a constant $C_b > 0$ that is independent of $h$ such that
  $\forall \bm{v}_h \in \bm{V}_h$ and $\forall \bm{p} \in Q(h) \times
  \bar{Q}(h)$
  \begin{equation}
    |b_h(v_h, \bm{p})| \leq C_b \normv{\bm{v}_h} \normp{\bm{p}}.
  \end{equation}
\end{assumption}

\begin{assumption}[Approximation properties of $\bm{V}_h$ and $\bm{Q}_h$]
  \label{ass:approx_props}
  For any $(v, q) \in [H_0^{k+1}(\Omega)]^d \times H^k(\Omega)$ with $k
  \geq 1$, letting $\bm{v} \coloneqq (v, v|_{\Gamma_h})$ and $\bm{q}
  \coloneqq (q, q|_{\Gamma_h})$, there holds
  \begin{equation} \label{eq:vel_space_approximability}
    \inf_{\bm{v}_h \in \bm{V}_h} \normvp{\bm{v} - \bm{v}_h}
    \leq c h^k \normH{k+1}{v}{\Omega},
  \end{equation}
  and
  \begin{equation} \label{eq:pressure_space_approximability}
    \inf_{\bm{q}_h \in \bm{Q}_h} \normp{\bm{q} - \bm{q}_h} \leq
    c h^{k} \normH{k}{q}{\Omega},
  \end{equation}
  where $c$ is a constant independent of $h$.
\end{assumption}

We are now able to derive the following pressure robust error estimate.
\begin{theorem}[Error estimate] \label{th:error_estimate}
  Let $(u, p) \in [H_0^{k+1}(\Omega)]^d \times H^k(\Omega)$ solve the
  Stokes system given by
  \cref{eq:stokesproblem}
  with $k \geq 1$, and set $\bm{u} \coloneqq (u, u|_{\Gamma_h})$ and
  $\bm{p} \coloneqq (p, p|_{\Gamma_h})$. Let
  \crefrange{ass:disc_div_free_exact_div_free}{ass:approx_props} be
  satisfied and let $(\bm{u}_h, \bm{p}_h) \in \bm{V}_h \times \bm{Q}_h$
  solve the discrete problem given by \cref{eq:hdg_discrete_problem}.
  Then, we have
  \begin{equation}
      \normv{\bm{u} - \bm{u}_h} \leq c h^k \normH{k+1}{v}{\Omega},
  \end{equation}
  and
  \begin{equation}
      \normL{2}{p - p_h}{\Omega} \leq c h^k \left(\normH{k}{p}{\Omega}
      + \nu \normH{k+1}{u}{\Omega}\right),
  \end{equation}
  where $c > 0$ is independent of $h$. In addition, if some restrictions
  are placed on the shape of $\Omega$ \cite{Rhebergen2017} such that the
  regularity condition
  \begin{equation}
      \nu \normH{2}{u}{\Omega} + \normH{1}{p}{\Omega} \leq c_r
      \normL{2}{f}{\Omega},
  \end{equation}
  holds for some constant $c_r$, then
  \begin{equation}
      \normL{2}{u - u_h}{\Omega} \leq c h^{k+1} \normH{k+1}{u}{\Omega}.
  \end{equation}
\end{theorem}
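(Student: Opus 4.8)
The plan is to work within the Babu\v{s}ka--Brezzi framework for the saddle-point problem \labelcref{eq:hdg_discrete_problem}, but to deploy \cref{ass:disc_div_free_exact_div_free} at the critical moment so that the pressure disappears from the velocity bound. First I would subtract the discrete equations from the consistency identities of \cref{ass:consistency} to obtain the error equations
\begin{align*}
a_h(\bm{u} - \bm{u}_h, \bm{v}_h) + b_h(v_h, \bm{p} - \bm{p}_h) &= 0, \quad \bm{v}_h \in \bm{V}_h, \\
b_h(u - u_h, \bm{q}_h) &= 0, \quad \bm{q}_h \in \bm{Q}_h,
\end{align*}
the second of which shows $u_h \in X_h$. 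The pivotal observation is that $b_h(v_h, \bm{p}) = 0$ for every $v_h \in X_h$: by \cref{ass:disc_div_free_exact_div_free} we have $v_h \in X$, so $\nabla \cdot v_h = 0$ on each cell while $v_h \cdot n$ is single-valued across interior facets and zero on $\partial \Omega_h$; since $p \in H^1(\Omega_h)$ is single-valued on $\Gamma_h$, the volume term of $b_h$ vanishes and the facet contributions of adjacent cells cancel. As $b_h(v_h, \bm{p}_h) = 0$ too, we get $b_h(v_h, \bm{p} - \bm{p}_h) = 0$ for all $v_h \in X_h$; this is the mechanism that decouples the velocity error from the pressure.

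Next I would build a divergence-free approximant. Taking the best approximation $\bm{v}_h \in \bm{V}_h$ of $\bm{u}$ from \cref{ass:approx_props}, I would correct it by an element of $\bm{V}_h$ using the inf-sup stability (\cref{ass:stability_of_b_h}) and boundedness (\cref{ass:boundedness_of_b_h}) of $b_h$, obtaining $\bm{w}_h \in \bm{V}_h$ with $w_h \in X_h$ and $\normvp{\bm{u} - \bm{w}_h} \le c\, \normvp{\bm{u} - \bm{v}_h}$ (a Fortin-type argument exploiting $b_h(u, \bm{q}_h) = 0$). Since $w_h - u_h \in X_h$, coercivity (\cref{ass:stability_of_a_h}) gives $\nu \beta_v \normv{\bm{w}_h - \bm{u}_h}^2 \le a_h(\bm{w}_h - \bm{u}, \bm{w}_h - \bm{u}_h) + a_h(\bm{u} - \bm{u}_h, \bm{w}_h - \bm{u}_h)$, where the last term equals $-b_h(w_h - u_h, \bm{p} - \bm{p}_h) = 0$ by the observation above. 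Bounding the remaining term with \cref{ass:boundedness_of_a_h}, dividing through, and combining the triangle inequality, \cref{ass:norm_equiv}, and \cref{ass:approx_props} yields $\normv{\bm{u} - \bm{u}_h} \le c h^k \normH{k+1}{u}{\Omega}$, free of any pressure contribution.

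The pressure estimate is the standard inf-sup argument. For $\bm{q}_h \in \bm{Q}_h$ I would apply \cref{ass:stability_of_b_h} to $\bm{p}_h - \bm{q}_h$, use the momentum error equation to write $b_h(v_h, \bm{p}_h - \bm{q}_h) = a_h(\bm{u} - \bm{u}_h, \bm{v}_h) + b_h(v_h, \bm{p} - \bm{q}_h)$, bound the right-hand side via \cref{ass:boundedness_of_a_h,ass:boundedness_of_b_h}, and then use the triangle inequality together with \cref{ass:approx_props}. Controlling $\normvp{\bm{u} - \bm{u}_h}$ through \cref{ass:norm_equiv} and the velocity estimate just proved gives $\normL{2}{p - p_h}{\Omega} \le \normp{\bm{p} - \bm{p}_h} \le c h^k(\normH{k}{p}{\Omega} + \nu \normH{k+1}{u}{\Omega})$.

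Finally, for the $L^2$ velocity bound I would use Aubin--Nitsche duality. Let $(\phi, \xi)$ solve the Stokes problem with data $u - u_h$, so the regularity hypothesis gives $\nu \normH{2}{\phi}{\Omega} \le c_r \normL{2}{u - u_h}{\Omega}$. Testing the dual problem with $\bm{u} - \bm{u}_h$ and using $b_h(u - u_h, \bm{\xi}) = 0$ (again because $u - u_h \in X$ and $\xi \in H^1$) reduces $\normL{2}{u - u_h}{\Omega}^2$ to $a_h(\bm{u} - \bm{u}_h, \bm{\phi})$; subtracting an approximant $\bm{\phi}_h$ with $\phi_h \in X_h$, using symmetry of $a_h$ and Galerkin orthogonality (whereby $b_h(\phi_h, \bm{p} - \bm{p}_h) = 0$) leaves $a_h(\bm{u} - \bm{u}_h, \bm{\phi} - \bm{\phi}_h) \le \nu C_a \normvp{\bm{u} - \bm{u}_h}\normvp{\bm{\phi} - \bm{\phi}_h}$. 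Inserting the velocity estimate, the $k = 1$ approximation bound $\normvp{\bm{\phi} - \bm{\phi}_h} \le ch\,\normH{2}{\phi}{\Omega}$, and the regularity estimate, the factor $\nu$ cancels and one power of $h$ is gained, producing $\normL{2}{u - u_h}{\Omega} \le c h^{k+1}\normH{k+1}{u}{\Omega}$. The hard part will be the Fortin construction of $\bm{w}_h$ (and of $\bm{\phi}_h$) with controlled approximation error, and making rigorous the identity $b_h(v_h, \bm{p}) = 0$ for $v_h \in X_h$ --- the precise point where \cref{ass:disc_div_free_exact_div_free} enters and the conceptual heart of pressure robustness.
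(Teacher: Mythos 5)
Your proposal is correct and is essentially the argument the paper has in mind: the paper's own proof is a one-line appeal to ``standard arguments'' plus Aubin--Nitsche duality, and your write-up reconstructs precisely those steps --- error equations from \cref{ass:consistency}, the decoupling identity $b_h(v_h, \bm{p} - \bm{p}_h) = 0$ for $v_h \in X_h$ furnished by \cref{ass:disc_div_free_exact_div_free}, a Fortin-type divergence-free approximant, the inf-sup argument for the pressure, and the duality argument for the $L^2$ bound. The only points needing (routine) extra care are that the bound of \cref{ass:boundedness_of_a_h} must be extended to pairs of non-discrete arguments in the duality step, and that $\normvp{\bm{u} - \bm{u}_h}$ in the pressure estimate is controlled by splitting through the divergence-free approximant $\bm{w}_h$ and applying \cref{ass:norm_equiv} to $\bm{w}_h - \bm{u}_h \in \bm{V}_h$.
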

\begin{proof}
  The velocity and pressure error estimates follow from
  \crefrange{ass:disc_div_free_exact_div_free}{ass:approx_props} using
  standard arguments, so we omit the details. The error estimate in the
  $L^2(\Omega)$-norm can be obtained using the Aubin--Nitsche duality
  method.
\end{proof}

\begin{remark}[Validity of assumptions] \label{rem:validity_of_assumptions}
  When the mesh cells are generated from an affine map from the
  reference simplex, square, or cube, e.g.~flat sided triangles,
  tetrahedra, parallelograms, and parallelepipeds, it is straightforward
  to verify that the assumptions hold for all of the divergence-free
  elements discussed in \cref{sec:ensuring_div_free} using an approach
  similar to \cite[Section~2]{Rhebergen2020}. We therefore omit the
  proofs in this work. When the maps are non-affine, it is more
  complicated to verify that the assumptions hold. The main difficulty
  is the loss of approximation accuracy of standard
  $H(\textrm{div})$-conforming elements on general meshes.
  \citet{Arnold2005} prove necessary and sufficient conditions for
  $H(\textrm{div})$-conforming elements to achieve optimal order
  approximation in $L^2$ on cells generated from a bilinear isomorphism
  of the square (i.e.~flat-faced quadrilaterals). These conditions are
  satisfied by Raviart-Thomas elements but not by
  Brezzi--Douglas--Marini elements. The order of approximation in the
  $L^2$-norm of $\mathbb{BDM}_k$ degrades from $k + 1$ in the affine
  case to $\lfloor(k + 1) / 2\rfloor$ in the bilinear case. The
  situation is more complicated in three dimensions;
  \citet{falk_gatto_monk_2011} show that standard
  $H(\textrm{div})$-conforming elements do not achieve optimal order
  approximation on cells generated by a trilinear isomorphism of the
  cube. In \cref{sec:proof_of_ass_on_quads}, we restrict the analysis to
  the divergence-free Raviart--Thomas element from
  \cref{sec:ensuring_div_free} on flat-faced quadrilateral cells,
  leaving more complex cell types and geometric maps to be investigated
  as further work.
\end{remark}

\section{Proof of assumptions for flat-faced quadrilateral cells}
\label{sec:proof_of_ass_on_quads}

Up to this point, the presentation has been quite general;
\cref{lem:X_h_subset_Hdiv,lem:X_h_subset_X} give sufficient conditions
for discretely divergence-free functions to be exactly divergence-free
regardless of whether the geometric map $T_K$ is affine or not, we have
given examples of divergence-free elements on simplices, quadrilateral,
and hexahedral cells in \cref{sec:ensuring_div_free}, and the pressure
robust error estimate derived in \cref{sec:press_rob_error_est} applies
to any divergence-free element that satisfies
\crefrange{ass:consistency}{ass:approx_props}. We now focus specifically
on the divergence-free Raviart--Thomas element discussed in
\cref{sec:ensuring_div_free} on families of shape-regular meshes of
flat-faced quadrilateral cells, for which we prove
\crefrange{ass:consistency}{ass:approx_props}. Recall that
\cref{ass:disc_div_free_exact_div_free} was proved for the
Raviart--Thomas element in \cref{sec:ensuring_div_free}.

Let the mesh be made of flat-faced quadrilateral cells which are the
image of the reference square under a bilinear diffeomorphism, i.e.~$T_K
\in [\mathbb{Q}_1(\hat{K})]^2$ \cite{Ern2021}. Recall that $J_K$ is the
Jacobian matrix of $T_K$. We assume each cell is convex, which implies
that \cite[Lemma~1.119]{Ern2004}
\begin{align} \label{eq:jacobian_geometric_bounds}
  \begin{split}
    \normL{\infty}{\det(J_K)}{\hat{K}} &\leq c h_K^2, \quad
    \normL{\infty}{\det(J_K^{-1})}{K} \leq c \frac{1}{\rho_K^2}, \quad \\
    \normL{\infty}{J_K}{\hat{K}} &\leq c h_K, \quad
    \normL{\infty}{J_K^{-1}}{K} \leq c \frac{h_K}{\rho_K^2},
  \end{split}
\end{align}
where $\rho_K \coloneqq \min_{i \in \{1:4\}}\rho_i$, $\rho_i$ is the
diameter of the largest circle that can be inscribed into the triangle
formed by the three vertices $(a_j)_{j \neq i}$ of $K$. Let the family
of meshes $\left\{\mathcal{T}_h\right\}_{h > 0}$ be shape-regular in the
sense that there exists a $\sigma_0$ such that for all $h$ and for all
$K \in \mathcal{T}_h$, $\sigma_K \coloneqq h_K / \rho_K \leq \sigma_0$
\cite{Ern2004}. As such, the analysis does not hold for, for example,
highly stretched (high aspect ratio) cells in the mesh. We refer to
\cite{Georgoulis2003} for more on the theoretical aspects of such
anisotropic cells.

\subsection{Some useful results on quadrilateral cells}

To prove \crefrange{ass:consistency}{ass:approx_props}, we require the
following results.

\begin{lemma}[Broken Poincar\'{e} inequality] \label{lem:broken_poincare_ineq}
  For all $\bm{v}_h \in \bm{V}_h$, there is a constant $c > 0$,
  independent of $h$, such that
  \begin{equation}
      \normL{2}{v_h}{\Omega} \leq c \normv{\bm{v}_h}.
  \end{equation}
\end{lemma}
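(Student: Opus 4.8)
The plan is to reduce the claim to a standard discrete Poincar\'{e}--Friedrichs inequality for broken polynomial spaces, handling the coupling to the facet unknown $\bar{v}_h$ by means of the boundary condition $\bar{v}_h = 0$ on $\partial \Omega_h$ that is built into $\bar{V}_h$. Since $v_h|_K$ is a polynomial (a Piola transform of an $\mathbb{RT}_k$ function), it lies in $[H^1(K)]^d$ on each cell, so $v_h$ is a piecewise-$H^1$ vector field and the scalar inequality may be applied componentwise.

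First I would introduce the auxiliary broken norm
\[
  \norm{v_h}_{1,h}^2 \coloneqq \sumK \normL{2}{\nabla v_h}{K}^2
  + \sum_{F \in \mathcal{F}_h} \frac{1}{h_F}
  \normL{2}{\llbracket v_h \rrbracket}{F}^2,
\]
where $\llbracket v_h \rrbracket$ denotes the usual jump across an interior facet and the one-sided trace on a boundary facet, and $h_F$ is the facet diameter. The key step is to establish $\norm{v_h}_{1,h} \le c \normv{\bm{v}_h}$ with $c$ independent of $h$. On an interior facet $F = \partial K^+ \cap \partial K^-$ I would insert the single-valued facet function $\bar{v}_h$ and use the triangle inequality,
\[
  \llbracket v_h \rrbracket|_F = (v_h^+ - \bar{v}_h) - (v_h^- - \bar{v}_h),
\]
so that $\tfrac{1}{h_F}\normL{2}{\llbracket v_h \rrbracket}{F}^2 \le \tfrac{2}{h_F}(\normL{2}{v_h^+ - \bar{v}_h}{F}^2 + \normL{2}{v_h^- - \bar{v}_h}{F}^2)$; on a boundary facet the condition $\bar{v}_h = 0$ gives $\llbracket v_h \rrbracket = v_h = v_h - \bar{v}_h$ directly. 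Summing over all facets, using shape regularity to exchange $h_F$ for the adjacent $h_K$ up to a uniform constant, and comparing with the jump term of $\normv{\cdot}$ yields the bound.

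Second, I would invoke a standard discrete Poincar\'{e}--Friedrichs inequality for piecewise polynomial (broken $H^1$) functions on shape-regular meshes, which gives $\normL{2}{v_h}{\Omega} \le c\, \norm{v_h}_{1,h}$ with $c$ independent of $h$; the homogeneous Dirichlet anchoring needed to rule out a nontrivial kernel is supplied precisely by the boundary-trace contributions retained in $\norm{\cdot}_{1,h}$. Chaining the two estimates gives $\normL{2}{v_h}{\Omega} \le c \normv{\bm{v}_h}$, as required.

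I expect the main obstacle to be the $h$-independence of the constant in the first step rather than the cited inequality itself. The norm $\normv{\cdot}$ penalises $v_h - \bar{v}_h$ cell-by-cell with weight $\alpha/h_K$, whereas $\norm{\cdot}_{1,h}$ penalises the inter-element jump with weight $1/h_F$, so one must verify that $h_{K^+}$, $h_{K^-}$, and $h_F$ are comparable uniformly over the mesh; this is where shape regularity ($\sigma_K \le \sigma_0$) and the comparability estimates \cref{eq:jacobian_geometric_bounds} enter. A secondary point worth checking is that no compatibility condition on $\bar{v}_h$ beyond $\bar{v}_h \in \bar{V}_h$ is used---only the boundary constraint---so the estimate holds for every $\bm{v}_h \in \bm{V}_h$.
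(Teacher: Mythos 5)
Your proposal is correct and follows essentially the same route as the paper: the paper's proof is a one-line citation of the discrete (broken) Poincar\'{e} inequality in Di Pietro--Ern, and the ``simple consequence'' it alludes to is precisely your jump-splitting step, i.e.~writing $\llbracket v_h \rrbracket = (v_h^+ - \bar{v}_h) - (v_h^- - \bar{v}_h)$ on interior facets and using $\bar{v}_h = 0$ on boundary facets, together with $h_F \sim h_K$ from shape regularity. One small imprecision: on non-affine cells the Piola transform of an $\mathbb{RT}_k$ function is not a polynomial in physical coordinates, so one should invoke the piecewise-$H^1$ version of the discrete Poincar\'{e}--Friedrichs inequality rather than a broken-polynomial one; your observation that $v_h|_K \in [H^1(K)]^d$ is exactly what makes this work.
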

\begin{proof}
  A simple consequence of \cite[Eq. (4.21)]{DiPietro2012}.
\end{proof}

\begin{lemma}[Inverse inequality] \label{lem:inverse_ineq}
  For all $K \in \mathcal{T}_h$ and $l \in \{1, 2\}$, there exists a
  constant $c > 0$, independent of $h$, such that $\forall v_h \in V_h$
  \begin{equation}
    \normH{l}{v_h}{K} \leq c h_K^{-1} \normH{l-1}{v_h}{K}.
  \end{equation}
\end{lemma}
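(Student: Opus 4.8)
The plan is to prove the estimate cellwise by transforming to the reference cell $\hat{K}$, exploiting finite dimensionality there, and scaling back with the two-sided geometric bounds of \cref{eq:jacobian_geometric_bounds}. Shape-regularity gives $\rho_K \simeq h_K$, so these bounds are two-sided in powers of $h_K$: $\det(J_K) \simeq h_K^2$, $\norm{J_K} \lesssim h_K$, $\norm{J_K^{-1}} \lesssim h_K^{-1}$, and the smallest singular value of $J_K^{-1}$ is bounded below by $c h_K^{-1}$. It suffices to establish the seminorm form $\seminormH{l}{v_h}{K} \leq c h_K^{-1} \seminormH{l-1}{v_h}{K}$ for $l \in \{1, 2\}$, writing $\seminormH{0}{v_h}{K} \coloneqq \normL{2}{v_h}{K}$; the stated full-norm bound follows because $\normH{l}{v_h}{K}^2 = \normH{l-1}{v_h}{K}^2 + \seminormH{l}{v_h}{K}^2 \leq (1 + c^2 h_K^{-2}) \normH{l-1}{v_h}{K}^2$ and $h_K \lesssim 1$.

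For $v_h = \psi_K^{-1}(\hat{v})$ with $\hat{v} \in \mathbb{RT}_k(\hat{K})$, set $w \coloneqq v_h \circ T_K = \det(J_K)^{-1} J_K \hat{v}$ (\cref{eq:piola_transform}). Differentiating $v_h = w \circ T_K^{-1}$ by the chain rule writes an order-$l$ physical derivative of $v_h$ as a sum of products of $J_K^{-1}$ and its reference derivatives with reference derivatives of $w$ of order between $1$ and $l$. Combining the geometric bounds with the change of variables $x = T_K(\hat{x})$ — and noting that the factor $\det(J_K) \simeq h_K^2$ cancels one power of $h_K$ against each $J_K^{-1}$ — this yields the forward bound $\seminormH{l}{v_h}{K} \lesssim h_K^{1-l} \sum_{m=1}^{l} \seminormH{m}{w}{\hat{K}}$ and, using the lower bounds on $\det(J_K)$ and on the singular values of $J_K^{-1}$, the reverse bound $\seminormH{l-1}{v_h}{K} \gtrsim h_K^{2-l} \seminormH{l-1}{w}{\hat{K}}$. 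A reference \emph{seminorm} inverse inequality $\seminormH{m}{w}{\hat{K}} \lesssim \seminormH{l-1}{w}{\hat{K}}$ for $1 \leq m \leq l$ then links the two, and the powers combine to $h_K^{1-l} \cdot h_K^{l-2} = h_K^{-1}$, as required.

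The reference seminorm inverse inequality is the crux and the main obstacle. A naive argument that scales full norms forward, applies a full-norm reference inverse inequality, and scales back loses a power of $h_K$ and delivers only $h_K^{-2}$: because $T_K$ is bilinear, $J_K$ is not constant, so the contravariant Piola transform does not commute with differentiation and genuinely produces lower-order terms (derivatives landing on $\det(J_K)^{-1} J_K$) that the full-norm estimate cannot absorb. The remedy is to work with seminorms of the pulled-back functions $w$, which range over the \emph{finite-dimensional} space $\det(J_K)^{-1} J_K \cdot \mathbb{RT}_k(\hat{K})$. On a finite-dimensional space, $\seminormH{l}{\cdot}{\hat{K}} \lesssim \seminormH{l-1}{\cdot}{\hat{K}}$ holds whenever the kernel of $\seminormH{l-1}{\cdot}{\hat{K}}$ lies in that of $\seminormH{l}{\cdot}{\hat{K}}$. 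For $l = 1$ this is automatic; for $l = 2$ the kernel of $\seminormH{1}{\cdot}{\hat{K}}$ is the constant vector fields, and the containment requires that constant fields belong to the space. This is exactly where the structure of the method is used: a physically constant field pulls back under $\psi_K$ to $\mathrm{adj}(J_K)$ times a constant, which lies in $\mathbb{RT}_k(\hat{K})$, so the constant fields are indeed attained as functions $w$, and the seminorm inequality is valid.

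It remains to make the constant in the reference seminorm inequality independent of $K$. I would do this by a normalisation and compactness argument: after scaling out $h_K$, the admissible cell shapes of a shape-regular family range over a compact set of non-degenerate bilinear maps of $\hat{K}$, on which both the space $\det(J_K)^{-1} J_K \cdot \mathbb{RT}_k(\hat{K})$ and the two competing seminorms depend continuously; the inequality constant is finite for each shape by the kernel argument and is therefore uniformly bounded over the family. Inserting this uniform reference estimate between the forward and reverse scalings gives $\seminormH{l}{v_h}{K} \lesssim h_K^{-1} \seminormH{l-1}{v_h}{K}$, and assembling the two seminorm cases yields the lemma.
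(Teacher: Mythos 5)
Your proposal is correct, but it takes a genuinely different route from the paper: the paper's ``proof'' of \cref{lem:inverse_ineq} is a pure citation (Lemma~3.6 of \cite{Wheeler2012} for flat-faced quadrilaterals, Lemma~3.5 of \cite{Ingram2010} for hexahedra), whereas you supply a self-contained scaling argument. Your two key observations are sound: because $T_K$ is bilinear, the Piola factors are non-constant, so the naive full-norm scaling chain really does degrade to $h_K^{-2}$ when $l=2$; and working seminorm-by-seminorm with the composition pullback $w = v_h \circ T_K$ repairs this, because the chain-rule terms in which derivatives land on $J_K^{-1}$ produce only lower-order reference seminorms of $w$, which the finite-dimensional reference inequality absorbs. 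The exponents check out against \cref{eq:jacobian_geometric_bounds} under shape regularity: $\seminormH{l}{v_h}{K} \lesssim h_K^{1-l}\sum_{m=1}^{l}\seminormH{m}{w}{\hat{K}}$ and $\seminormH{l-1}{v_h}{K} \gtrsim h_K^{2-l}\seminormH{l-1}{w}{\hat{K}}$, which combine with the reference inequality to give $h_K^{-1}$, as you claim. What each route buys: the paper's citation is short and delegates exactly these non-affine difficulties to the literature; your argument makes explicit where bilinearity threatens the estimate and why it does not destroy it, which is precisely the content hidden inside the cited lemmas.

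Two caveats, neither fatal. First, a logical slip: the kernel containment for the finite-dimensional seminorm inequality does \emph{not} ``require that constant fields belong to the space''. Any $w$ with $\seminormH{1}{w}{\hat{K}} = 0$ is constant on $\hat{K}$ and therefore has $\seminormH{2}{w}{\hat{K}} = 0$, whichever space it lives in, so that step is unconditional. Your verification that $\mathrm{adj}(J_K)$ times a constant lies in $\mathbb{RT}_k(\hat{K})$ is true, but its real use is in the \emph{uniformity} step: it guarantees you can subtract means without leaving the mapped space, which is what makes the compactness argument run cleanly. Second, that compactness argument is only sketched; a complete write-up needs the standard contradiction argument --- normalised shape-regular cells form a compact set of bilinear maps with Jacobian determinant uniformly bounded below, and a sequence violating uniformity yields, after mean-zero normalisation and extraction of a $C^2(\hat{K})$-convergent subsequence, a finite limit for $\seminormH{2}{w_n}{\hat{K}}$, contradicting its assumed blow-up. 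With those two points tidied, your argument stands as a legitimate replacement for the paper's citations.
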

\begin{proof}
  The result is stated for flat-faced quadrilateral cells in Lemma~3.6
  of~\cite{Wheeler2012} and proved for hexahedral cells in Lemma~3.5
  of~\cite{Ingram2010}.
\end{proof}

\begin{lemma}[Discrete Trace Inequality] \label{lem:discrete_trace_ineq}
  For all $K \in \mathcal{T}_h$, there exists a constant $c > 0$,
  independent of $h$, such that $\forall v_h \in V_h$
  \begin{equation} \label{eq:trace_ineq}
    \normL{2}{v_h}{\partial K} \leq c h_K^{-\frac{1}{2}}\normL{2}{v_h}{K}
  \end{equation}
  and
  \begin{equation} \label{eq:trace_ineq_normal_deriv}
    \normL{2}{\partial_n v_h}{\partial K} \leq c h_K^{-\frac{1}{2}}
    \normH{1}{v_h}{K}.
  \end{equation}
\end{lemma}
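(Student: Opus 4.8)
The plan is to obtain both bounds by combining a \emph{continuous}, mesh-dependent trace inequality, valid for every function in $H^1(K)$ (respectively $H^2(K)$), with the inverse inequality of \cref{lem:inverse_ineq}. The Piola structure used to build $V_h(K)$ plays no role here: both ingredients are stated for functions living on the physical cell $K$, so we may treat $v_h$ simply as a (componentwise smooth) vector field on $K$.

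First I would establish the scaled continuous trace inequality
\begin{equation*}
  \normL{2}{v}{\partial K}^2 \leq c\del{h_K^{-1}\normL{2}{v}{K}^2 + h_K\seminormH{1}{v}{K}^2}
  \qquad \forall v \in H^1(K),
\end{equation*}
with $c$ independent of $h$. This follows from the fixed reference trace inequality $\normL{2}{\hat{v}}{\partial \hat{K}}^2 \leq \hat{c}\del{\normL{2}{\hat{v}}{\hat{K}}^2 + \seminormH{1}{\hat{v}}{\hat{K}}^2}$ on the reference square by transporting norms through $T_K$. Setting $\hat{v} \coloneqq v \circ T_K$ and using \cref{eq:jacobian_geometric_bounds} together with shape regularity ($\rho_K \geq h_K / \sigma_0$), a change of variables gives $\normL{2}{\hat{v}}{\hat{K}}^2 \leq c h_K^{-2}\normL{2}{v}{K}^2$ and $\seminormH{1}{\hat{v}}{\hat{K}}^2 \leq c\,\seminormH{1}{v}{K}^2$. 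Because the cells are \emph{flat-faced}, $T_K$ restricts to an affine map on each edge, so the surface measure scales cleanly and $\normL{2}{v}{\partial K}^2 \leq c h_K \normL{2}{\hat{v}}{\partial \hat{K}}^2$; combining these scalings yields the claimed inequality. (Alternatively the estimate may be quoted from \cite{DiPietro2012}.)

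For \labelcref{eq:trace_ineq}, apply the continuous trace inequality to $v_h$ and bound the seminorm term using \cref{lem:inverse_ineq} with $l = 1$, giving $\seminormH{1}{v_h}{K} \leq c h_K^{-1}\normL{2}{v_h}{K}$; hence $h_K \seminormH{1}{v_h}{K}^2 \leq c h_K^{-1}\normL{2}{v_h}{K}^2$, and the two terms collapse to $c h_K^{-1}\normL{2}{v_h}{K}^2$, as required. For \labelcref{eq:trace_ineq_normal_deriv}, use $\envert{\partial_n v_h} \leq \envert{\nabla v_h}$ pointwise on $\partial K$ and apply the continuous trace inequality componentwise to $\nabla v_h$ (legitimate since $v_h$ is smooth on each cell), obtaining $\normL{2}{\partial_n v_h}{\partial K}^2 \leq c\del{h_K^{-1}\seminormH{1}{v_h}{K}^2 + h_K\seminormH{2}{v_h}{K}^2}$. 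Invoking \cref{lem:inverse_ineq} with $l = 2$ to control $\seminormH{2}{v_h}{K} \leq c h_K^{-1}\normH{1}{v_h}{K}$ and bounding $\seminormH{1}{v_h}{K} \leq \normH{1}{v_h}{K}$ then reduces the right-hand side to $c h_K^{-1}\normH{1}{v_h}{K}^2$, proving the claim.

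The main obstacle is the scaled continuous trace inequality on bilinearly mapped cells: one must track the Jacobian factors through \cref{eq:jacobian_geometric_bounds} and use shape regularity to recover clean powers of $h_K$. The flat-faced assumption is precisely what makes the boundary contribution manageable, since otherwise the edge map would fail to be affine and the surface Jacobian would not scale uniformly in $h_K$.
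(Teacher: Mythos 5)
Your proposal is correct and takes essentially the same approach as the paper: the paper's proof likewise combines the scaled continuous trace inequality \labelcref{eq:cont_trace_ineq} (quoted from \cite[Theorem~3.2]{Evans2013a}) with the inverse inequality of \cref{lem:inverse_ineq}, exactly as you do for both \labelcref{eq:trace_ineq} and \labelcref{eq:trace_ineq_normal_deriv}. The only difference is that you re-derive the continuous trace inequality by a reference-cell scaling argument (using \cref{eq:jacobian_geometric_bounds}, shape regularity, and the affine edge maps) where the paper simply cites it, which is a routine, self-contained elaboration rather than a different route.
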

\begin{proof}
  The results follow from the continuous trace inequality
  \cite[Theorem~3.2]{Evans2013a}
  \begin{equation} \label{eq:cont_trace_ineq}
      \normL{2}{v}{\partial K}^2 \leq c \left(h_K^{-1} \normL{2}{v}{K}^2 +
      h_K \seminormH{1}{v}{K}^2\right),
  \end{equation}
  which holds for all $v \in H^1(K)$, and \cref{lem:inverse_ineq} (see
  \cite[Lemma~5.6.2]{Evans2011}).
\end{proof}

We now introduce the space
\begin{equation}
  Y_h \coloneqq \left\{v_h \in V_h; \; \llbracket v_h \cdot n \rrbracket = 0
  \textnormal{ on } F \; \forall F \in \mathcal{F}_h^i \right\},
\end{equation}
where $\mathcal{F}_h^i$ denotes the set of interior facets of the mesh
and $\llbracket v_h \cdot n \rrbracket \coloneqq v|_{K_0} \cdot n|_{K_0}
+ v|_{K_1} \cdot n|_{K_1}$, where $K_0$ and $K_1$ are the cells sharing
$F \in \mathcal{F}_h^i$. $Y_h$ consists of functions in $V_h$ with
continuous normal components across element boundaries. Let
$\mathcal{I}_h^{RT}: [H_0^1(\Omega)]^2 \to Y_h$ be the global
Raviart--Thomas interpolation operator. We then have the following
lemma.

\begin{lemma}
  Let $v \in [H^l(K)]^d$ and $m \in \{0:1\}$. If (i) $l=1$ or (ii) $v$
  is divergence-free and $l \in \{1 : k + 1\}$, then there exists a
  constant $c$, independent of $h_K$, such that for all $K \in
  \mathcal{T}_h$
  \begin{equation} \label{eq:parametric_RT_interp_error}
    \seminormH{m}{v - \mathcal{I}_h^{RT} v}{K} \leq
    c h_K^{l - m} \normH{l}{v}{K}.
  \end{equation}
\end{lemma}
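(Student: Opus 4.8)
The plan is to reduce the estimate to a Bramble--Hilbert argument on the reference square and then to transport the reference-element bound back to $K$ through the contravariant Piola transform $\psi_K$ defined in \cref{eq:piola_transform}, carefully tracking the geometric factors produced by the (non-affine) bilinear map $T_K$. The structural fact I would build on is that the Raviart--Thomas interpolant commutes with the Piola transform, so that on each cell $\mathcal{I}_h^{RT} v = \psi_K^{-1}(\hat{\mathcal{I}}^{RT} \hat{v})$ with $\hat{v} \coloneqq \psi_K(v)$, where $\hat{\mathcal{I}}^{RT}$ is the interpolant associated with $\mathbb{RT}_k(\hat{K})$. Writing $\hat{w} \coloneqq \hat{v} - \hat{\mathcal{I}}^{RT}\hat{v}$, the cell error is exactly $v - \mathcal{I}_h^{RT}v = \psi_K^{-1}(\hat{w})$, so the proof splits cleanly into (a) a reference-element approximation bound for $\hat{w}$ and (b) scaling estimates relating norms under $\psi_K$ and $\psi_K^{-1}$.

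For step~(a) I would invoke a standard Bramble--Hilbert (Deny--Lions) argument on $\hat{K}$. The reference interpolant $\hat{\mathcal{I}}^{RT}$ is bounded on $[H^l(\hat{K})]^d$ for $l \geq 1$ (its degrees of freedom are facet and interior moments, which require at least $H^1$ regularity to be well defined) and reproduces $[\mathbb{P}_k(\hat{K})]^d \subset \mathbb{RT}_k(\hat{K})$, so that $\seminormH{m}{\hat{w}}{\hat{K}} \leq c\,\seminormH{l}{\hat{v}}{\hat{K}}$ for $0 \leq m \leq l \leq k+1$. This is precisely where the two hypotheses of the lemma originate: for a general $H^1$ field only $\seminormH{1}{\hat{v}}{\hat{K}}$ is available after the transport back, giving case~(i) with $l=1$; pushing $l$ up to $k+1$ while retaining the sharp power of $h_K$ is what forces the divergence-free assumption of case~(ii).

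For step~(b) I would use the explicit form $\psi_K^{-1}(\hat{w}) = \det(J_K)^{-1} J_K\,(\hat{w} \circ T_K^{-1})$ together with the chain rule and the geometric bounds \cref{eq:jacobian_geometric_bounds}. For the $L^2$ contribution ($m=0$) a direct change of variables gives $\normL{2}{\psi_K^{-1}(\hat{w})}{K} \leq c\,\normL{2}{\hat{w}}{\hat{K}}$ with a constant depending only on the shape-regularity parameter $\sigma_0$; transporting $\hat{v}$ back to $v$ in the same way and collecting the resulting powers of $h_K$ and $\rho_K$ yields the factor $h_K^{l}$ for $m=0$. The $H^1$-seminorm case $m=1$ is analogous but now requires differentiating $\det(J_K)^{-1}$ and $J_K$, which for a bilinear $T_K$ are non-constant; these derivatives are again controlled through \cref{eq:jacobian_geometric_bounds} and shape regularity, and one uses \cref{lem:inverse_ineq} where needed to absorb lower-order terms.

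The hard part is step~(b) in the non-affine setting, and specifically recovering the optimal order $h_K^{l-m}$ when $l>1$. Because $J_K$ and $\det J_K$ are not constant, differentiating the Piola transform produces additional terms that, for a general vector field, degrade the attainable rate --- this is exactly the mechanism behind the $\mathbb{BDM}_k$ loss discussed in \cref{rem:validity_of_assumptions}. The divergence-free hypothesis is what rescues the sharp rate: since the Piola transform preserves divergence (\cref{eq:piola_preverves_div}), $\hat{\nabla} \cdot \hat{v} = 0$, and the commuting property then forces $\hat{\mathcal{I}}^{RT}\hat{v}$, and hence $\hat{w}$, to be divergence-free on $\hat{K}$ as well. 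Restricted to this divergence-free subspace the degrading contributions cancel, and one recovers optimal scaling. Making this cancellation rigorous --- essentially reproducing on $\hat{K}$ the Arnold--Boffi--Falk argument \cite{Arnold2005} that $\mathbb{RT}_k$ (unlike $\mathbb{BDM}_k$) retains optimal $L^2$ accuracy under bilinear maps, and extending it to the $H^1$-seminorm --- is the step on which I would expect to spend most of the technical effort.
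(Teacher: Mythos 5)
The paper does not prove this lemma at all in the text: its ``proof'' is a one-line citation to Eq.~(3.66) and Remark~3.5 of Brezzi--Fortin. Your attempt is therefore necessarily a from-scratch reconstruction, but it has a genuine gap at your step~(b), and the gap already bites in case~(i), which you treat as the easy case. For a bilinear map $T_K$ the matrix $\det(J_K)J_K^{-1}=\mathrm{adj}(J_K)$ has affine, non-constant entries of size $O(h_K)$ whose first derivatives are \emph{also} $O(h_K)$, i.e.\ the same order as the matrix itself. Hence a reference derivative that lands on $\mathrm{adj}(J_K)$ (or on the mixed second derivative of $T_K$) gains no power of $h_K$ relative to one that lands on $v\circ T_K$, and the transport of seminorms gives only
\begin{equation*}
  \seminormH{l}{\psi_K(v)}{\hat{K}} \leq c \sum_{j=0}^{l} h_K^{j}\seminormH{j}{v}{K},
\end{equation*}
with constants depending on $\sigma_0$. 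Feeding this into the reference Bramble--Hilbert bound and the $L^2$-stability of $\psi_K^{-1}$ yields, for $l=1$, $m=0$, only $\normL{2}{v - \mathcal{I}_h^{RT}v}{K} \leq c(\normL{2}{v}{K} + h_K\seminormH{1}{v}{K})$, which has no decay in $h_K$; for $m=1$ the same route is even worse by a factor $h_K^{-1}$, since the inverse transport of the $H^1$-seminorm costs $h_K^{-1}$. So ``collecting the resulting powers of $h_K$ and $\rho_K$'' does not produce $h_K^{l-m}$ on non-affine cells, and \cref{lem:inverse_ineq} cannot absorb the offending terms because it applies to discrete functions, not to the interpolation error of a general $v$.

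The repair is also not the mechanism you propose. What rescues $\mathbb{RT}_k$ under bilinear maps is not a cancellation special to divergence-free fields but the Arnold--Boffi--Falk polynomial-reproduction property that this paper already uses in \cref{lem:gen_quad_v_inf_bound}: since $\mathbb{S}_k(\hat{K})\subset\mathbb{RT}_k(\hat{K})$, on convex flat-faced quadrilaterals one has $\psi_K([\mathbb{P}_k(K)]^2)\subset\mathbb{RT}_k(\hat{K})$. The correct argument inserts a \emph{physical} polynomial inside the projection error: write $\hat{v}-\hat{\mathcal{I}}^{RT}\hat{v}=(I-\hat{\mathcal{I}}^{RT})\psi_K(v-p)$ with $p\in[\mathbb{P}_{l-1}(K)]^2\subseteq[\mathbb{P}_k(K)]^2$, use boundedness of $I-\hat{\mathcal{I}}^{RT}$ from $H^1(\hat{K})$, the transport bound $\normH{1}{\psi_K(w)}{\hat{K}}\leq c(\normL{2}{w}{K}+h_K\seminormH{1}{w}{K})$, and a simultaneous approximation estimate for $p$ on the physical shape-regular cell (Brenner--Scott, exactly as in \cref{lem:gen_quad_v_inf_bound}). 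The powers of $h_K$ then come from polynomial approximation on $K$, where low-order terms of $v-p$ are themselves small, not from the Piola scaling. Your proposal instead defers precisely this step --- the entire difficulty --- to ``reproducing the ABF argument'' restricted to divergence-free fields, so the one step that constitutes the proof is missing; moreover your diagnosis misreads ABF, whose positive result for $\mathbb{RT}_k$ is a polynomial-reproduction statement with no divergence-free restriction (the divergence-free hypothesis in the lemma reflects the form of the classical Brezzi--Fortin statement, and matters for divergence/3D estimates, not for the two-dimensional $L^2$ mechanism you describe).
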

\begin{proof}
  The result follows from \cite[Eq.~(3.66) and Remark~3.5]{Brezzi1991}.
\end{proof}

For a flat-faced quadrilateral cell $K$, let
\begin{equation}
  \mathcal{R}_k(\partial K) \coloneqq
  \left\{\bar{q}_h; \; \bar{q}_h \in L^2(\partial K),
  \bar{q}_h|_{F} \in \mathbb{Q}_k(F) \; \forall F \in
  \mathcal{F}_K \right\},
\end{equation}
where $\mathcal{F}_K$ denotes the set of facets belonging to $K$. We
then have the following.
\begin{lemma}[Lifting operator] \label{lem:lifting_op}
    For flat-faced quadrilateral cells, there exists a linear lifting
    operator $L: \mathcal{R}_k(\partial K) \to V_h(K)$, such that
    \begin{equation}
        (L \bar{q}_h) \cdot n = \bar{q}_h \quad \textnormal{and} \quad
        \normL{2}{L \bar{q}_h}{K} \leq c h_K^{\frac{1}{2}}
        \normL{2}{\bar{q}_h}{\partial K}
    \end{equation}
    for all $\bar{q}_h \in \mathcal{R}_k(\partial K)$.
\end{lemma}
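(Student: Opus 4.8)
The plan is to build the lifting on the reference square, transport it to $K$ via the inverse contravariant Piola transform, and control every scaling factor with the geometric bounds \cref{eq:jacobian_geometric_bounds}. First I would construct a reference lifting $\hat{L} : \mathcal{R}_k(\partial \hat{K}) \to \mathbb{RT}_k(\hat{K})$. Recall that the edge degrees of freedom of $\mathbb{RT}_k(\hat{K})$ are exactly the normal moments against $\mathbb{Q}_k(\hat{F})$ on each reference facet $\hat{F}$, and that $\hat{v}_h|_{\hat{F}} \cdot \hat{n} \in \mathbb{Q}_k(\hat{F})$ for every $\hat{v}_h \in \mathbb{RT}_k(\hat{K})$. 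Hence, given $\hat{\bar{q}} \in \mathcal{R}_k(\partial \hat{K})$ I would define $\hat{L}\hat{\bar{q}}$ by matching the edge degrees of freedom to the moments of $\hat{\bar{q}}$ and setting the interior degrees of freedom to zero; this yields a linear map with $(\hat{L}\hat{\bar{q}}) \cdot \hat{n} = \hat{\bar{q}}$ on $\partial \hat{K}$. Since $\hat{K}$ is fixed and all spaces are finite dimensional, equivalence of norms gives a constant $\hat{c}$, independent of $h$, with $\normL{2}{\hat{L}\hat{\bar{q}}}{\hat{K}} \leq \hat{c}\,\normL{2}{\hat{\bar{q}}}{\partial \hat{K}}$.

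Next I would define the physical operator. Because $K$ is flat-faced, the restriction of the bilinear map $T_K$ to each reference edge is affine, so the surface Jacobian is a constant $|F|/|\hat{F}|$ on each facet $F$, and by \cref{eq:piola_preserves_normals} the relation $v \cdot n = (\hat{v} \cdot \hat{n})\,|\hat{F}|/|F|$ holds pointwise on $F$ whenever $v = \psi_K^{-1}(\hat{v})$. Given $\bar{q}_h \in \mathcal{R}_k(\partial K)$ I would therefore set, facet by facet, $\hat{\bar{q}} \coloneqq (\bar{q}_h \circ T_K|_{\hat{F}})\,|F|/|\hat{F}|$, which lies in $\mathcal{R}_k(\partial \hat{K})$ since the affine edge map preserves $\mathbb{Q}_k$, and define $L\bar{q}_h \coloneqq \psi_K^{-1}(\hat{L}\hat{\bar{q}})$. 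As $\hat{L}\hat{\bar{q}} \in \mathbb{RT}_k(\hat{K}) = V_h(\hat{K})$, we have $L\bar{q}_h \in V_h(K)$ by \cref{eq:generate_V_h_on_K_from_Khat}; combining the two displayed relations gives $(L\bar{q}_h) \cdot n = \bar{q}_h$ on $\partial K$, and linearity is inherited from $\hat{L}$, $\psi_K^{-1}$, and the composition $\bar{q}_h \mapsto \hat{\bar{q}}$.

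Finally I would establish the $L^2$ bound by scaling. Using $v \circ T_K = \det(J_K)^{-1} J_K \hat{v}$ gives $\normL{2}{v}{K}^2 \leq \normL{\infty}{\envert{J_K}^2 / \det(J_K)}{\hat{K}}\,\normL{2}{\hat{v}}{\hat{K}}^2$; the bounds $\normL{\infty}{J_K}{\hat{K}} \leq c h_K$ and $\det(J_K) \geq \rho_K^2/c$ (from $\normL{\infty}{\det(J_K^{-1})}{K} \leq c\rho_K^{-2}$), together with shape-regularity $h_K/\rho_K \leq \sigma_0$, make this prefactor a constant independent of $h_K$. For the facet data, a facet-wise change of variables with the constant edge Jacobian yields $\normL{2}{\hat{\bar{q}}}{\hat{F}}^2 = (|F|/|\hat{F}|)\,\normL{2}{\bar{q}_h}{F}^2 \leq c h_K\,\normL{2}{\bar{q}_h}{F}^2$ since $|F| \leq c h_K$ and $|\hat{F}|$ is fixed. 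Chaining the reference bound between these two estimates gives $\normL{2}{L\bar{q}_h}{K} \leq c h_K^{1/2}\,\normL{2}{\bar{q}_h}{\partial K}$, as required. I expect the main obstacle to be the careful bookkeeping of the measure- and Piola-induced scaling factors, in particular confirming that the normal-trace identity holds pointwise rather than only in the moment sense, which hinges crucially on the flatness of the facets guaranteeing a constant edge Jacobian.
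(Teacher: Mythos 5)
Your proposal is correct and follows essentially the same route as the paper's proof: you construct the lifting through the Raviart--Thomas degrees of freedom on the reference square (interior moments zero, facet moments matching a surface-Jacobian-scaled pullback of $\bar{q}_h$, your $\hat{\bar{q}}$ being exactly the paper's $\check{\bar{q}}_h$ since $|\det(J_K)|\,\norml{J_K^{-T}\hat{n}} = |F|/|\hat{F}|$ is constant on each flat facet), map back with the inverse Piola transform, and use affinity of $T_K$ on facets to upgrade the normal-trace identity to a pointwise one. The only difference is presentational: the paper verifies $v_h \cdot n = \bar{q}_h$ by testing against all of $\mathcal{R}_k(\partial K)$ and invokes ``standard scaling arguments'' for the bound, whereas you obtain the trace identity directly from the constant edge Jacobian and carry out the scaling estimates explicitly.
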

\begin{proof}
  The proof is a simple modification of Proposition 2.4 of
  \cite{Du2019}. We begin by defining the lifting of $\bar{q}_h \in
  \mathcal{R}_k(\partial K)$ as the unique function $v_h \coloneqq
  L\bar{q}_h \in V_h(K)$ satisfying
  \begin{equation}
    \int_{\hat{K}} \hat{v}_h \cdot \hat{w}_h \dif \hat{x} = 0
    \quad \forall \hat{w}_h \in
    \mathbb{Q}_{k-1, k}(\hat{K}) \times \mathbb{Q}_{k, k-1}(\hat{K})
  \end{equation}
  and
  \begin{equation} \label{eq:lifting_def_facets}
    \int_{\partial \hat{K}} \hat{v}_h \cdot \hat{n} \hat{r}_h \dif \hat{s}
    = \int_{\partial \hat{K}} \check{\bar{q}}_h \hat{r}_h \dif \hat{s}
    \quad \forall \hat{r}_h \in \mathcal{R}_k(\partial \hat{K}),
  \end{equation}
  where $\hat{v}_h \coloneqq \psi_K(v_h)$ and $\check{\bar{q}}_h
  \coloneqq |\det(J_K)| \norml{J_K^{-T} \hat{n}} \bar{q}_h \circ T_K$;
  the motivation for the latter definition will become evident shortly.
  Now, for any $\hat{r}_h \in \mathcal{R}_k(\partial \hat{K})$ the
  function $r_h \coloneqq \hat{r}_h \circ T_K^{-1}$ is an element of
  $\mathcal{R}_k(\partial K)$ because, for flat-faced quadrilaterals,
  the geometric mapping $T_K$ is affine when restricted to the cell's
  facets \cite[p.~172]{Wheeler2012}. In addition, we have
  \begin{align}
    \begin{split}
      \int_{\partial K} v_h \cdot n r_h \dif s =
      \int_{\partial \hat{K}} \hat{v}_h \cdot \hat{n} \hat{r}_h \dif \hat{s}
      = \int_{\partial \hat{K}} \check{\bar{q}}_h \hat{r}_h \dif \hat{s} 
      = \int_{\partial K} \bar{q}_h r_h \dif s,
    \end{split}
  \end{align}
  where the first equality follows from
  \cref{eq:piola_preserves_normals}, the second from
  \cref{eq:lifting_def_facets}, and the third from the fact that the
  measures $\dif s$ and $\dif \hat{s}$ are related by
  \cite[eq.~(9.12a)]{Ern2021}
  \begin{equation} \label{eq:transformed_measures}
    \dif s = |\det(J_{K})| \norml{J_{K}^{-T}\hat{n}} \dif \hat{s}
  \end{equation}
  (this was the motivation behind the definition of
  $\check{\bar{q}}_h$). Since this result holds for all $r_h \in
  \mathcal{R}_k(\partial K)$ and $v_h \cdot n \in \mathcal{R}_k(\partial
  K)$ (due to \cite[eq.~(2.4.4)]{Boffi2013} and the fact that $T_K$ is
  affine on facets), we have that $v_h \cdot n = \bar{q}_h$. This proves
  the first result of the lemma.

  The bound on $v_h$ follows from standard scaling arguments and the
  fact that all norms on a finite dimensional space are equivalent.
\end{proof}

\begin{lemma}
  \label{lem:gen_quad_v_inf_bound}
  Let $\left\{\mathcal{T}_h\right\}_{h > 0}$ be a family of meshes made
  of convex quadrilateral cells with $V_h(\hat{K}) \coloneqq
  \mathbb{RT}_k(\hat{K})$ and $\bar{V}_h(\hat{F}) \coloneqq
  [\mathbb{Q}_k(\hat{F})]^d$ where $k \geq 1$. Also let $v \in
  [H^{k+1}(\Omega)]^2$. Then, setting $\bm{v}  \coloneqq (v,
  v|_{\Gamma_h})$, we have that
  \begin{equation}
    \inf_{\bm{v}_h \in \bm{V}_h} \normvp{\bm{v} - \bm{v}_h}
    \leq c h^k \normH{k+1}{v}{\Omega}.
  \end{equation}
\end{lemma}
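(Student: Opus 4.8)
The plan is to bound the infimum by exhibiting one admissible pair and estimating each term of $\normvp{\cdot}^2$. I would take $v_h$ to be the element-wise Raviart--Thomas interpolant of $v$ (so that $v_h \in V_h$), and $\bar{v}_h$ to be the facet-wise $L^2$-projection of the trace $v|_{\Gamma_h}$ onto $\bar{V}_h$. With this choice $\inf_{\bm{v}_h \in \bm{V}_h}\normvp{\bm{v}-\bm{v}_h}^2$ is bounded by the sum of three groups: the cell gradient term $\sumK \seminormH{1}{v-v_h}{K}^2$, the facet term $\sumK \frac{\alpha}{h_K}\normL{2}{(\bar{v} - \bar{v}_h)-(v-v_h)}{\partial K}^2$, and the normal-derivative term $\sumK \frac{h_K}{\alpha}\normL{2}{\partial_n(v-v_h)}{\partial K}^2$.

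The facet term simplifies because $\bar{v} = v|_{\Gamma_h}$, so on each $\partial K$ one has $(\bar{v} - \bar{v}_h)-(v-v_h) = v_h - \bar{v}_h$; by the triangle inequality this is controlled by $\normL{2}{v-v_h}{\partial K} + \normL{2}{v - \bar{v}_h}{\partial K}$. I would bound the first via the continuous trace inequality \cref{eq:cont_trace_ineq}, reducing it to $\normL{2}{v-v_h}{K}$ and $\seminormH{1}{v-v_h}{K}$, and the second by the optimal scaling of the facet $L^2$-projection on the (affine) facets. For the normal-derivative term I would apply \cref{eq:cont_trace_ineq} to $\nabla(v-v_h)$ --- legitimate since $v \in [H^2(K)]^2$ for $k \geq 1$ and $v_h$ is smooth on $K$ --- to reduce it to $\seminormH{1}{v-v_h}{K}$ and $\seminormH{2}{v-v_h}{K}$. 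After inserting the weights and summing, every contribution is expressed through the cell interpolation errors $\seminormH{m}{v - v_h}{K}$, $m \in \{0,1,2\}$, together with the facet projection error.

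It then remains to show these interpolation errors are of optimal order, $\seminormH{m}{v - v_h}{K} \leq c\, h_K^{k+1-m}\normH{k+1}{v}{K}$ for $m \in \{0,1,2\}$. Granting this, the cell gradient and facet terms scale like $h_K^{2k}$, and the normal-derivative term --- carrying a factor $h_K^2$ against $\seminormH{2}{v-v_h}{K}^2$ --- also scales like $h_K^{2k}$; summing over $K$ and using shape-regularity to collapse the local norms into $\normH{k+1}{v}{\Omega}$ gives the claimed bound. For $m \in \{0,1\}$ the required estimate is \cref{eq:parametric_RT_interp_error} with $l = k+1$, while the $m=2$ case is its natural higher-seminorm analogue, obtained by the same reference-cell Bramble--Hilbert argument.

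The main obstacle is establishing these optimal-order interpolation errors on non-affine (bilinearly mapped) quadrilaterals. On such cells the Piola-transformed Raviart--Thomas space need not reproduce the full physical polynomial space, and a crude reference-cell scaling loses a power of $h_K$ owing to the non-constant factor $\det(J_K)^{-1}J_K$ in the contravariant Piola map; the geometric bounds \cref{eq:jacobian_geometric_bounds} and shape-regularity must be combined carefully to recover the sharp powers. It is exactly here that the special approximation properties of Raviart--Thomas (as opposed to Brezzi--Douglas--Marini) elements on bilinear cells --- the Arnold--Boffi--Falk phenomenon noted in \cref{rem:validity_of_assumptions} --- together with the divergence structure captured in \cref{eq:parametric_RT_interp_error}, preserve the optimal order. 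Carrying this through for the second-order normal-derivative contribution with the same robustness is the most delicate remaining point.
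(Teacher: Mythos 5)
Your reduction of $\normvp{\bm{v}-\bm{v}_h}$ to cell seminorms via the trace inequality is fine (and matches what the paper does at the end), but the core of your argument---the optimal-order estimates for the Raviart--Thomas interpolant---does not go through, and this is precisely where the whole difficulty of the lemma lives. The lemma makes \emph{no} divergence-free assumption on $v$: it only requires $v \in [H^{k+1}(\Omega)]^2$. But \cref{eq:parametric_RT_interp_error} gives the rate $h_K^{l-m}$ with $l = k+1$ only under case (ii), i.e.\ for \emph{divergence-free} $v$; for general $v$ only case (i), $l = 1$, is available, which for the gradient term ($m = 1$) yields $\seminormH{1}{v - \mathcal{I}_h^{RT} v}{K} \leq c\,\normH{1}{v}{K}$ --- no decay in $h$ at all. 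So your citation does not deliver the cell estimates you need, and your closing appeal to ``the divergence structure captured in \cref{eq:parametric_RT_interp_error}'' cannot rescue it, since that structure is exactly what is absent for a generic $v$. The same problem afflicts your $m=2$ estimate: it is not a ``natural higher-seminorm analogue obtained by the same reference-cell Bramble--Hilbert argument,'' because reference-cell scaling through the contravariant Piola map on bilinearly mapped cells loses powers of $h_K$ (the very Arnold--Boffi--Falk obstruction you name); acknowledging the obstruction is not the same as overcoming it.

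The paper's proof avoids the Raviart--Thomas interpolant entirely, and this is the missing idea. By Lemma~4.3 of \cite{Arnold2005}, on a convex quadrilateral generated by a bilinear isomorphism the \emph{physical} space satisfies $V_h(K) \supseteq [\mathbb{P}_k(K)]^2$ (this uses the subspace $\mathbb{S}_k(\hat{K}) \subset \mathbb{RT}_k(\hat{K})$, and is false for $\mathbb{BDM}_k$). One then applies the Bramble--Hilbert argument of \cite[Proposition~4.1.9, Lemma~4.3.8]{Brenner2008} \emph{directly in physical coordinates}: there exists a polynomial $v_h \in [\mathbb{P}_k(K)]^2 \subseteq V_h(K)$ with
\begin{equation*}
  \seminormH{m}{v - v_h}{K} \leq c\, h_K^{k+1-m} \normH{k+1}{v}{K},
  \qquad 0 \leq m \leq k+1,
\end{equation*}
which requires no divergence-free hypothesis and covers the $m=2$ case needed for the $\partial_n$ term, since no Piola-map scaling ever enters. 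The facet field is handled with the Lagrange interpolant $\mathcal{I}_h^L v$ and \cref{eq:lagrange_interp_error_est} (your facet-wise $L^2$ projection would also do, as it is dominated by any competitor), and the conclusion follows from \cref{eq:cont_trace_ineq} exactly as in your decomposition. In short: your skeleton is correct, but the load-bearing step is absent, and the tools you propose to fill it provably cannot.
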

\begin{proof}
  Let $\mathbb{S}_k(\hat{K})$ be a finite dimensional space spanned by
  the vectors
  \begin{equation}
    \begin{pmatrix}
      \hat{x}_1^i \hat{x}_2^j \\
      0
    \end{pmatrix}, \quad
    \begin{pmatrix}
      0 \\
      \hat{x}_1^j \hat{x}_2^i
    \end{pmatrix},
    \textnormal{ and }
    \begin{pmatrix}
        \hat{x}_1^{k + 1} \hat{x}_2^k \\
        - \hat{x}_1^k \hat{x}_2^{k + 1}
    \end{pmatrix},
  \end{equation}
  where $0 \leq i \leq k$ and $0 \leq j \leq k$. The space
  $\mathbb{S}_k(\hat{K})$ is a subspace of $\mathbb{RT}_k(\hat{K})$
  \cite[p.~2432]{Arnold2005} and thus, by definition, is a subspace of
  $V_h(\hat{K})$. Recall that $V_h(K)$ is defined via
  \cref{eq:generate_V_h_on_K_from_Khat} with $\psi_K$ being the
  contravariant Piola transform. By assumption, each flat-faced
  quadrilateral in the mesh is convex and can be generated from the
  reference square using a bilinear isomorphism, and thus Lemma 4.3 of
  \cite{Arnold2005} ensures that $V_h(K) \supseteq [\mathbb{P}_k(K)]^2$.
  Shape regularity allows Proposition~4.1.9 and Lemma~4.3.8 in
  \cite{Brenner2008} to be applied, which state that there exists a
  polynomial $v_h$ of degree less than $k + 1$ (and thus an element of
  $V_h(K)$) such that, for $0 \leq m \leq k + 1$ we have
  \begin{equation} \label{eq:bramble_hilbert_brenner2008}
    \seminormH{m}{v - v_h}{K} \leq c h_K^{k + 1 - m}
    \normH{k + 1}{v}{K}.
  \end{equation}
  In addition, let $\mathcal{I}_h^L: [H_0^{k + 1}(\Omega)]^2 \to
  \Theta_h$ be the Lagrange interpolation operator, where
  \begin{equation}
    \Theta_h \coloneqq \left\{\theta_h \in [\mathcal{C}^0(\bar{\Omega})]^2;
    \theta_h|_K \circ T_K \in [\mathbb{Q}_k(\hat{K})]^2 \;
    \forall K \in \mathcal{T}_h; \theta_h|_{\partial \Omega} = 0 \right\}.
  \end{equation}
  On shape-regular, flat-faced quadrilaterals we have
  \cite[eq.~(13.28)]{Ern2021}
  \begin{equation} \label{eq:lagrange_interp_error_est}
    \seminormH{m}{v - \mathcal{I}_h^L v}{K} \leq
    c h_K^{k + 1 - m} \normH{k + 1}{v}{K}.
  \end{equation}
  The desired result can then be obtained by bounding $\normvp{(v - v_h,
  v|_{\Gamma_h} - \mathcal{I}_h^L v|_{\Gamma_h})}$ using
  \cref{eq:bramble_hilbert_brenner2008,eq:lagrange_interp_error_est,eq:cont_trace_ineq}.
\end{proof}

\begin{remark}
  We remark that the proof of \cref{eq:bramble_hilbert_brenner2008} for
  flat-faced quadrilateral cells relied on $V_h(\hat{K}) \supseteq
  \mathbb{S}_K(\hat{K})$, which would not have been the case if
  $V_h(\hat{K})$ was taken to be the Brezzi--Douglas--Marini finite
  element space (see \cite{Arnold2005}).
\end{remark}

\begin{lemma}
  \label{lem:gen_quad_q_inf_bound}
  Let $\left\{\mathcal{T}_h\right\}_{h > 0}$ be a family of meshes made
  of flat-faced quadrilateral cells with $Q_h(\hat{K}) \coloneqq
  \mathbb{Q}_{k}(\hat{K})$ and $\bar{Q}_h(\hat{F}) \coloneqq
  \mathbb{Q}_{k}(\hat{F})$ where $k \geq 1$. Let $q \in H^k(\Omega)$ and
  set $\bm{q}  \coloneqq (q, q|_{\Gamma_h})$. Then, there holds
  \begin{equation}
    \inf_{\bm{q}_h \in \bm{Q}_h} \normp{\bm{q} - \bm{q}_h} \leq
    c h^{k} \seminormH{k}{q}{\Omega}.
  \end{equation}
\end{lemma}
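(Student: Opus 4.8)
The plan is to split the target norm as $\normp{\bm{q} - \bm{q}_h}^2 = \normL{2}{q - q_h}{\Omega_h}^2 + \normpbar{q|_{\Gamma_h} - \bar{q}_h}^2$ and to construct an explicit candidate $\bm{q}_h = (q_h, \bar{q}_h) \in \bm{Q}_h$ bounding the cell and facet pieces separately. The cornerstone I would establish first is a polynomial-reproduction property of the cell space: because each $T_K$ is bilinear, any $p \in \mathbb{P}_{k-1}(K)$ pulls back to $p \circ T_K \in \mathbb{Q}_{k-1}(\hat{K}) \subseteq \mathbb{Q}_k(\hat{K}) = Q_h(\hat{K})$, since substituting the bilinear components of $T_K$ into a monomial of total degree at most $k-1$ raises the degree in each reference variable to at most $k-1$. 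Hence $\mathbb{P}_{k-1}(K) \subseteq Q_h(K)$, in exact analogy with the inclusion $\mathbb{P}_k(K) \subseteq V_h(K)$ used in \cref{lem:gen_quad_v_inf_bound}. The companion restriction identity $\mathbb{P}_{k-1}(K)|_F \subseteq \bar{Q}_h(F) = \mathbb{P}_k(F)$ also holds, because the trace of $\mathbb{Q}_k(\hat{K})$ on a reference facet is $\mathbb{Q}_k(\hat{F})$ and $T_K$ is affine on facets for flat-faced quadrilaterals.

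For the cell term I would take $q_h|_K \coloneqq w_K$, where $w_K \in \mathbb{P}_{k-1}(K) \subseteq Q_h(K)$ is the averaged Taylor polynomial of $q$ on $K$. The Bramble--Hilbert lemma (e.g.\ \cite[Lemma~4.3.8]{Brenner2008}) then gives the local estimates $\seminormH{m}{q - w_K}{K} \leq c h_K^{k-m} \seminormH{k}{q}{K}$ for $m \in \{0, 1\}$, with $c$ uniform by shape-regularity. Squaring the $m = 0$ estimate and summing over all cells immediately yields $\normL{2}{q - q_h}{\Omega_h}^2 \leq c h^{2k} \seminormH{k}{q}{\Omega}^2$, which is the required bound on the cell contribution.

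For the facet term I would set $\bar{q}_h|_F \coloneqq \pi_F q$, the $L^2(F)$-orthogonal projection of $q|_F$ onto $\bar{Q}_h(F)$; this is single-valued across facets and lies in $\bar{Q}_h$. For a cell $K$ and a facet $F \subseteq \partial K$, since $w_K|_F \in \mathbb{P}_{k-1}(F) \subseteq \bar{Q}_h(F)$, the best-approximation property of $\pi_F$ gives $\normL{2}{q - \pi_F q}{F} \leq \normL{2}{q - w_K}{F} \leq \normL{2}{q - w_K}{\partial K}$. Applying the continuous trace inequality \cref{eq:cont_trace_ineq} to $q - w_K$ and inserting the two Bramble--Hilbert estimates bounds the right-hand side by $c h_K^{k - 1/2} \seminormH{k}{q}{K}$. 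Multiplying by $h_K$, summing over the (at most four) facets of $K$ and then over all cells gives $\normpbar{q|_{\Gamma_h} - \bar{q}_h}^2 = \sumK h_K \normL{2}{q - \bar{q}_h}{\partial K}^2 \leq c h^{2k} \seminormH{k}{q}{\Omega}^2$, and adding the cell contribution completes the proof.

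I expect the facet contribution to be the main obstacle: the facet unknown must be single-valued across each facet, so a cellwise interpolant cannot be used directly. The device that resolves this is to take the $L^2(F)$ projection and bound it from one side against the cell interpolant $w_K$ by best approximation, which is legitimate precisely because $\mathbb{P}_{k-1}(K)|_F \subseteq \bar{Q}_h(F)$. Verifying this inclusion together with the reproduction property $\mathbb{P}_{k-1}(K) \subseteq Q_h(K)$ --- both consequences of the bilinearity of $T_K$ and its affinity on facets --- is the only place the quadrilateral structure genuinely enters, and some care is needed to track the powers of $h_K$ so that the weight $h_K$ in $\normpbar{\cdot}$ combines with the $h_K^{k-1/2}$ trace bound to reproduce the same $h^{2k}$ scaling as the cell term.
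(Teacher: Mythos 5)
Your proposal is correct, and it proves the lemma by a genuinely different route than the paper. The paper takes both components of $\bm{q}_h$ from a single globally continuous quasi-interpolant, the Bernardi--Girault regularization operator $\mathcal{I}_h^B : H^k(\Omega) \to Q_h \cap C^0(\bar{\Omega})$ of \cite{Bernardi1998}: because $\mathcal{I}_h^B q$ is continuous, its trace on $\Gamma_h$ is automatically single-valued and lies in the facet space, and the bound follows from the patch-wise estimates $\normL{2}{q - \mathcal{I}_h^B q}{K} \leq c h_K^k \seminormH{k}{q}{\omega(K)}$ and $\seminormH{1}{q - \mathcal{I}_h^B q}{K} \leq c h_K^{k-1} \seminormH{k}{q}{\omega(K)}$ combined with the trace inequality \cref{eq:cont_trace_ineq} --- the same final step as yours. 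You instead construct the two components separately: a discontinuous, cell-local averaged Taylor polynomial $w_K \in \mathbb{P}_{k-1}(K)$ for the cell part, and the facet-wise $L^2$ projection for the facet part, handling single-valuedness by projection rather than by continuity, and controlling the projection error by best approximation against $w_K$. This forces you to verify the inclusions $\mathbb{P}_{k-1}(K) \subseteq Q_h(K)$ and $\mathbb{P}_{k-1}(K)|_F \subseteq \bar{Q}_h(F)$, and your justifications are sound: composing a monomial of total degree at most $k-1$ with the bilinear map $T_K$ yields degree at most $k-1$ in each reference variable, and $T_K$ is affine on each facet so that $\bar{Q}_h(F) = \mathbb{P}_k(F)$ (a fact the paper itself uses in the proof of \cref{lem:stability_b_2}). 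The paper's route buys brevity, outsourcing all approximation work to a cited operator (a variant of which it reuses for the Fortin argument in \cref{lem:fortin}); yours buys self-containedness and locality: only the Bramble--Hilbert lemma and $L^2$ best approximation are needed, the constants are cell-local rather than patch-local, and Bramble--Hilbert applies with uniform constants because the cells are assumed convex and shape-regular, hence star-shaped with respect to balls of radius comparable to $h_K$. One shared blemish, which is not a gap of yours relative to the paper: neither your facet projection $\pi_F q$ nor the paper's trace $\mathcal{I}_h^B q|_{\Gamma_h}$ need satisfy the zero-mean constraint in the definition \labelcref{eq:Qbar_h_definition} of $\bar{Q}_h \subset L^2_0(\Gamma_h)$; both proofs silently ignore this normalization.
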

\begin{proof}
  Let $\mathcal{I}_h^B : H^k(\Omega) \to Q_h \cap C^0(\bar{\Omega})$ be
  the operator defined by equation (4.1) in \cite{Bernardi1998} with the
  properties (see \cite{Bernardi1998}, Theorems~4.1 and~4.2)
  \begin{subequations}
    \begin{align}
      \normL{2}{q - \mathcal{I}_h^B q}{K} &\leq
      c h_K^k \seminormH{k}{q}{\omega(K)},
      \\
      \seminormH{1}{q - \mathcal{I}_h^B q}{K} &\leq
      h_K^{k-1} \seminormH{k}{q}{\omega(K)},
    \end{align}
  \end{subequations}
  where $\omega(K)$ is the union of all cells that share at least a
  corner with $K$ \cite{Bernardi1998}. The desired result then follows
  from using these properties and \cref{eq:cont_trace_ineq} to bound
  $\normp{(q - \mathcal{I}_h^B q, q|_{\Gamma_h} - \mathcal{I}_h^B
  q|_{\Gamma_h})}$ from above.
\end{proof}

We now present some results that are helpful to prove inf-sup stability
of $b_h$. Following Section 2.4 of \cite{Rhebergen2020}, we begin by
defining
\begin{equation}
  b_1(v_h, q_h) \coloneqq - \sumK \int_K q_h \nabla \cdot v_h \dif x
  \quad \textnormal{and} \quad
    b_2(v_h, \bar{q}_h) \coloneqq \sumK \int_{\partial K} v_h \cdot n \bar{q}_h
    \dif s.
\end{equation}
Let us redefine $\mathcal{I}_h^B: [H^1(\Omega)]^2 \to \Theta_h$ to be
the regularization operator defined by Eq.~(4.11) in \cite{Bernardi1998}
with the properties (see \cite[Theorems~4.3 and~4.4]{Bernardi1998}):
\begin{equation}
  \normL{2}{v - \mathcal{I}_h^B v}{K} \leq
  c h_K \seminormH{1}{v}{\omega(K)}
  \label{eq:bernadi_interp_L2}
\end{equation}
and
\begin{equation}
  \normH{1}{v - \mathcal{I}_h^B v}{K} \leq
  c \seminormH{1}{v}{\omega(K)},
  \label{eq:bernadi_interp_H1}
\end{equation}
for $k\geq 1$, where we recall that $\omega(K)$ is the union of all
cells that share at least a corner with $K$. We then have the following
lemmas.

\begin{lemma}[Fortin operator] \label{lem:fortin}
  Let $k \geq 1$. Then, for any function $v \in [H_0^1(\Omega)]^2$,
  there holds
  \begin{equation}
    b_1(\mathcal{I}_h^{RT} v, q_h) = b_1(v, q_h) \quad \forall q_h \in Q_h
  \end{equation}
  and
  \begin{equation}
    \normv{(\mathcal{I}_h^{RT} v, \mathcal{I}_h^B v|_{\Gamma_h})} \leq c
    (1 + \alpha)^{\frac{1}{2}} \normH{1}{v}{\Omega}.
  \end{equation}
\end{lemma}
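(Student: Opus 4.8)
The plan is to establish the two claims separately: the commutation identity rests on the classical commuting-diagram property of the Raviart--Thomas interpolant, while the stability bound is assembled cell by cell from the interpolation estimates already available for $\mathcal{I}_h^{RT}$ and $\mathcal{I}_h^B$ together with the trace inequality \cref{eq:cont_trace_ineq}.

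For the commutation identity, I would fix $q_h \in Q_h$ and work elementwise. Transforming to the reference cell via \cref{eq:piola_preverves_div} and writing $\hat{q}_h \coloneqq q_h \circ T_K \in \mathbb{Q}_k(\hat{K}) = Q_h(\hat{K})$, I would invoke that the reference Raviart--Thomas interpolant commutes with the divergence, i.e.~$\hat{\nabla} \cdot \hat{\mathcal{I}}^{RT} \hat{v} = \hat{P}_k(\hat{\nabla} \cdot \hat{v})$, where $\hat{P}_k$ is the $L^2(\hat{K})$-projection onto $\mathbb{Q}_k(\hat{K})$. Since $\mathcal{I}_h^{RT}$ is defined through the contravariant Piola transform, so that $\psi_K(\mathcal{I}_h^{RT} v) = \hat{\mathcal{I}}^{RT}(\psi_K v)$, this gives
\[
  \int_K q_h \nabla \cdot \mathcal{I}_h^{RT} v \dif x
  = \int_{\hat{K}} \hat{q}_h \, \hat{P}_k(\hat{\nabla} \cdot \hat{v}) \dif \hat{x}
  = \int_{\hat{K}} \hat{q}_h \, \hat{\nabla} \cdot \hat{v} \dif \hat{x}
  = \int_K q_h \nabla \cdot v \dif x,
\]
where the middle equality uses self-adjointness of $\hat{P}_k$ together with $\hat{q}_h \in \mathbb{Q}_k(\hat{K})$, and the outer equalities are \cref{eq:piola_preverves_div}. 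Summing over $K \in \mathcal{T}_h$ yields $b_1(\mathcal{I}_h^{RT} v, q_h) = b_1(v, q_h)$.

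For the stability bound I would control the two contributions to $\normv{\cdot}^2$ in turn. The gradient term follows from \cref{eq:parametric_RT_interp_error} with $l = m = 1$ and the triangle inequality, giving $\seminormH{1}{\mathcal{I}_h^{RT} v}{K} \leq c \normH{1}{v}{K}$ and hence $\sumK \normL{2}{\nabla \mathcal{I}_h^{RT} v}{K}^2 \leq c \normH{1}{v}{\Omega}^2$. For the penalty term I would split $\mathcal{I}_h^B v - \mathcal{I}_h^{RT} v = (\mathcal{I}_h^B v - v) + (v - \mathcal{I}_h^{RT} v)$ on each $\partial K$, apply the trace inequality \cref{eq:cont_trace_ineq} to each piece, and insert the Bernardi bounds \cref{eq:bernadi_interp_L2,eq:bernadi_interp_H1} for the first piece and \cref{eq:parametric_RT_interp_error} (with $l = 1$, $m \in \{0,1\}$) for the second. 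In both cases a factor $h_K$ is produced that cancels the $h_K^{-1}$ in the penalty weight, yielding $\tfrac{\alpha}{h_K}\normL{2}{\mathcal{I}_h^B v - \mathcal{I}_h^{RT} v}{\partial K}^2 \leq c \alpha (\seminormH{1}{v}{\omega(K)}^2 + \normH{1}{v}{K}^2)$. Summing over $K$ and using shape-regularity and the finite overlap of the patches $\omega(K)$ to absorb $\sumK \seminormH{1}{v}{\omega(K)}^2 \leq c \normH{1}{v}{\Omega}^2$, I would obtain $\normv{(\mathcal{I}_h^{RT} v, \mathcal{I}_h^B v|_{\Gamma_h})}^2 \leq c(1 + \alpha) \normH{1}{v}{\Omega}^2$, and taking square roots gives the claim.

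The main obstacle is really a bookkeeping delicacy rather than a deep difficulty: keeping the commuting-diagram argument honest on bilinearly mapped cells. The reference identity $\hat{\nabla} \cdot \hat{\mathcal{I}}^{RT} \hat{v} = \hat{P}_k \hat{\nabla} \cdot \hat{v}$ is standard, but one must check that the Piola-based definition of $\mathcal{I}_h^{RT}$ genuinely commutes with $\psi_K$ and that $\hat{\nabla} \cdot \mathbb{RT}_k(\hat{K}) = \mathbb{Q}_k(\hat{K}) = Q_h(\hat{K})$, so that the projection step is exact. Because the integral identity \cref{eq:piola_preverves_div} sidesteps the pointwise $\det(J_K)$ scaling of the divergence under the Piola transform, this reduces to a clean reference-cell statement; the remaining care is in the finite-overlap counting for the patches $\omega(K)$, which is controlled by the shape-regularity assumption $\sigma_K \leq \sigma_0$.
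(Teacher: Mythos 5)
Your proposal is correct and matches the paper's proof in all essentials: the stability bound is assembled from exactly the ingredients the paper lists (\cref{eq:bernadi_interp_L2,eq:bernadi_interp_H1,eq:parametric_RT_interp_error,eq:cont_trace_ineq} combined via the triangle and Cauchy--Schwarz inequalities), with the finite overlap of the patches $\omega(K)$ handling the sum over cells. The only difference is one of presentation: for the commutation identity the paper simply cites the last paragraph of \citet{Arnold2005}, whereas you rederive that fact from the reference-cell commuting diagram together with \cref{eq:piola_preverves_div} --- a correct argument, and indeed the right one, since the integral form of the Piola divergence identity is exactly what lets the weak Fortin property survive on bilinearly mapped cells where the pointwise identity $\nabla \cdot \mathcal{I}_h^{RT} v = \Pi_{Q_h} \nabla \cdot v$ fails.
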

\begin{proof}
  The first result is given in the last paragraph of
  \cite[p.~2443]{Arnold2005} for meshes made of flat-faced quadrilateral
  elements. The bound on $\normv{(\mathcal{I}_h^{RT} v, \mathcal{I}_h^B
  v|_{\Gamma_h})}$ then follows from \cref{eq:bernadi_interp_L2},
  \cref{eq:bernadi_interp_H1}, \cref{eq:parametric_RT_interp_error}, the
  Cauchy--Schwarz inequality, and \cref{eq:cont_trace_ineq}.
\end{proof}

\begin{lemma}[Stability of $b_1$] \label{lem:stability_b_1}
  There exists a constant $\beta_1 > 0$, independent of $h$, such that
  \begin{equation}
    \inf_{q_h \in Q_h} \sup_{\bm{v}_h \in Y_h \times \bar{V}_h}
    \frac{b_1(v_h, q_h)}{\normv{\bm{v}_h} \normL{2}{q_h}{\Omega}}
    \geq \beta_1.
  \end{equation}
\end{lemma}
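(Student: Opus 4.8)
The plan is to use a Fortin-operator argument built on the continuous inf-sup condition for the divergence, while treating the mean-value component of $q_h$ separately. Since $Q_h$ contains the constant functions and the denominator uses the full norm $\normL{2}{q_h}{\Omega}$, I first decompose $q_h = \tilde{q}_h + \bar{c}$, where $\bar{c} \coloneqq |\Omega|^{-1}\int_\Omega q_h \dif x$ is the mean and $\tilde{q}_h \in L_0^2(\Omega)$, so that $\normL{2}{q_h}{\Omega}^2 = \normL{2}{\tilde{q}_h}{\Omega}^2 + |\Omega|\,|\bar{c}|^2$. The two components must be controlled by different test functions, reflecting the fact that constant pressures are annihilated by $b_1(\cdot, \bar{c})$ on $H_0^1$-fields and can only be captured by functions in $Y_h$ carrying nonzero normal flux on $\partial\Omega$.

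For the zero-mean part I would invoke the classical surjectivity of the divergence onto $L_0^2(\Omega)$ \cite{Boffi2013}: there exists $v \in [H_0^1(\Omega)]^2$ with $\nabla \cdot v = -\tilde{q}_h$ and $\normH{1}{v}{\Omega} \leq c \normL{2}{\tilde{q}_h}{\Omega}$. Setting $v_h^{(1)} \coloneqq \mathcal{I}_h^{RT} v \in Y_h$, the Fortin property of \cref{lem:fortin} gives $b_1(v_h^{(1)}, q_h) = b_1(v, q_h) = -\int_\Omega q_h \,\nabla \cdot v \dif x = \normL{2}{\tilde{q}_h}{\Omega}^2$, where the cross term vanishes because $\int_\Omega \bar{c}\,\tilde{q}_h \dif x = 0$; the norm bound in \cref{lem:fortin} then yields $\normv{(v_h^{(1)}, \mathcal{I}_h^B v|_{\Gamma_h})} \leq c(1+\alpha)^{1/2}\normL{2}{\tilde{q}_h}{\Omega}$.

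The main step, and the one requiring an idea beyond \cref{lem:fortin}, is the constant part. Here I exploit that $Y_h$ imposes no boundary condition, so its members may carry nonzero normal flux on $\partial\Omega$. Fix the smooth field $w(x) \coloneqq x/d$, which satisfies $\nabla \cdot w = 1$, and set $v_h^{(2)} \coloneqq \mathcal{I}_h^{RT} w \in Y_h$. The commuting property of the Raviart--Thomas interpolant, together with the fact that constants lie in $Q_h(K)$ (so $\Pi_{Q_h} 1 = 1$), gives $\nabla \cdot v_h^{(2)} = 1$ on each cell, whence $b_1(v_h^{(2)}, q_h) = -\int_\Omega q_h \dif x = -|\Omega|\,\bar{c}$. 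Since $w$ is a fixed $H^1$-field, the interpolation estimates used in the proof of \cref{lem:fortin} (namely \cref{eq:bernadi_interp_L2,eq:bernadi_interp_H1,eq:parametric_RT_interp_error,eq:cont_trace_ineq}, none of which requires $w \in [H_0^1]^2$) bound $\normv{(v_h^{(2)}, \mathcal{I}_h^B w|_{\Gamma_h})} \leq c(1+\alpha)^{1/2}$ independently of $h$.

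Finally, I would combine the two fields with a sign-aligned weight so that no cross terms arise. Taking $v_h \coloneqq v_h^{(1)} - \operatorname{sgn}(\bar{c})\,|\bar{c}|\, v_h^{(2)}$, with facet field the corresponding combination of the $\mathcal{I}_h^B$-traces (an element of $\bar{V}_h$), gives $b_1(v_h, q_h) = \normL{2}{\tilde{q}_h}{\Omega}^2 + |\Omega|\,|\bar{c}|^2 = \normL{2}{q_h}{\Omega}^2$, while the triangle inequality and the two norm bounds give $\normv{\bm{v}_h} \leq c(1+\alpha)^{1/2}\bigl(\normL{2}{\tilde{q}_h}{\Omega} + |\bar{c}|\bigr) \leq c'(1+\alpha)^{1/2}\normL{2}{q_h}{\Omega}$, the last step using the equivalence of $(\normL{2}{\tilde{q}_h}{\Omega}^2 + |\bar{c}|^2)^{1/2}$ and $\normL{2}{q_h}{\Omega}$. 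Dividing, the ratio is bounded below by $\beta_1 \coloneqq c'^{-1}(1+\alpha)^{-1/2}$, independent of $h$. I expect the delicate points to be the verification that $v_h^{(2)} \in Y_h$ enjoys an $h$-independent $\normv{\cdot}$-bound and that the Raviart--Thomas commuting relation indeed reproduces the constant divergence on the bilinearly mapped (non-affine) cells; the zero-mean part is essentially a direct application of \cref{lem:fortin}.
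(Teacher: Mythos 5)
Your treatment of the zero-mean component is exactly the paper's own proof: the paper takes $q_h$, invokes surjectivity of $\nabla \cdot : [H_0^1(\Omega)]^2 \to L_0^2(\Omega)$ to obtain $v_{q_h}$ with $\nabla \cdot v_{q_h} = q_h$ and $\beta_c \normH{1}{v_{q_h}}{\Omega} \leq \normL{2}{q_h}{\Omega}$, and then concludes with \cref{lem:fortin}, precisely your first step. In doing so the paper tacitly assumes $\int_\Omega q_h \dif x = 0$ (no such $v_{q_h}$ exists otherwise, since $\int_\Omega \nabla \cdot v_{q_h} \dif x = 0$ for $v_{q_h} \in [H_0^1(\Omega)]^2$), i.e.\ it proves the statement for zero-mean pressures. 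You correctly spotted this issue, but your attempted fix for the constant mode contains a genuine gap, and in fact the constant mode cannot be rescued at all.

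The false step is the claim $\normv{(v_h^{(2)}, \mathcal{I}_h^B w|_{\Gamma_h})} \leq c(1+\alpha)^{1/2}$ with $c$ independent of $h$. By \cref{eq:Vbar_h_definition}, every admissible facet field vanishes on $\partial \Omega_h$, whereas $v_h^{(2)} = \mathcal{I}_h^{RT} w$ cannot have small normal trace there: by your own Fortin-property computation, $\int_{\partial\Omega} v_h^{(2)} \cdot n \dif s = \int_\Omega \nabla \cdot v_h^{(2)} \dif x = |\Omega|$, so $\normL{2}{v_h^{(2)}}{\partial\Omega} \geq |\Omega| / |\partial\Omega|^{1/2}$. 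Consequently, for \emph{any} $\bar{v}_h \in \bar{V}_h$, the penalty part of the norm satisfies $\normv{(v_h^{(2)}, \bar{v}_h)}^2 \geq \sum_{F \subset \partial\Omega} \frac{\alpha}{h_K} \normL{2}{v_h^{(2)}}{F}^2 \geq \frac{\alpha}{h} \frac{|\Omega|^2}{|\partial\Omega|}$, which blows up like $\alpha/h$, so your final inf-sup constant degenerates like $h^{1/2}$. Moreover, no alternative construction helps: for $q_h \equiv 1$ and any $\bm{v}_h \in Y_h \times \bar{V}_h$, the interior normal jumps cancel in the divergence theorem, giving $b_1(v_h, 1) = -\int_{\partial\Omega} v_h \cdot n \dif s$, and the Cauchy--Schwarz inequality against the boundary penalty terms (where $\bar{v}_h = 0$) yields $|b_1(v_h, 1)| \leq (h/\alpha)^{1/2} |\partial\Omega|^{1/2} \normv{\bm{v}_h}$. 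Hence the supremum of $b_1(v_h, 1) / \bigl(\normv{\bm{v}_h} \normL{2}{1}{\Omega}\bigr)$ over $Y_h \times \bar{V}_h$ is $O(h^{1/2})$: the constant pressure mode genuinely cannot be controlled uniformly in this lemma. The statement has to be understood for pressures of zero mean, consistent with \cref{eq:pressure_zero_mean}; under that reading your first step (which coincides with the paper's argument) is already a complete proof, and the constant-mode construction should be dropped rather than repaired.
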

\begin{proof}
  Since the Hilbert complex
  \begin{equation}
      [H^1_0(\Omega)]^2 \xrightarrow{\nabla \cdot} L^2_0(\Omega)
  \end{equation}
  is bounded and exact at $L_0^2(\Omega)$, the divergence operator
  $\nabla \cdot: [H^1_0(\Omega)]^2 \to L^2_0(\Omega)$ is continuous and
  surjective \cite[Theorem 2.1.1]{Evans2011}. Hence, for all $q_h \in
  Q_h$, there exists a $v_{q_h} \in [H^1_0(\Omega)]^2$ such that $\nabla
  \cdot v_{q_h} = q_h$ and $\beta_c \normH{1}{v_{q_h}}{\Omega} \leq
  \normL{2}{q_h}{\Omega}$. Therefore, using \cref{lem:fortin}, we have
  that
  \begin{align}
    \begin{split}
      \inf_{q_h \in Q_h} \sup_{\bm{v}_h \in Y_h \times \bar{V}_h}
      \frac{b_1(v_h, q_h)}{\normv{\bm{v}_h}\normL{2}{q_h}{\Omega}}
      &\geq - \inf_{q_h \in Q_h} \frac{b_1(\mathcal{I}_h^{RT} v_{q_h}, q_h)}{\normv{(\mathcal{I}_h^{RT} v_{q_h},
      \mathcal{I}_h^B v_{q_h}|_{\Gamma_h})}\normL{2}{q_h}{\Omega}} \\
      &\geq - \inf_{q_h \in Q_h} \frac{b_1(v_{q_h}, q_h)}{c(1 + \alpha)^{\frac{1}{2}}
      \normH{1}{v_{q_h}}{\Omega} \normL{2}{q_h}{\Omega}} \\
      &\geq \frac{\beta_c}{c(1 + \alpha)^{\frac{1}{2}}}
      \inf_{q_h \in Q_h} \frac{\sumK \int_K q_h \nabla \cdot v_{q_h} \dif x}
      {\normL{2}{q_h}{\Omega}^2} \\
      &\geq \frac{\beta_c}{c(1 + \alpha)^{\frac{1}{2}}}.
    \end{split}
  \end{align}
  Thus, the lemma holds with $\beta_1 = \beta_c / c(1 +
  \alpha)^{\frac{1}{2}}$.
\end{proof}

\begin{lemma}[Stability of $b_2$] \label{lem:stability_b_2}
  There exists a constant $\beta_2 > 0$, independent of $h$, such that
  \begin{equation}
    \inf_{q_h \in \bar{Q}_h} \sup_{\bm{v}_h \in \bm{V}_h}
    \frac{b_2(v_h, \bar{q}_h)}{\normv{\bm{v}_h} \normpbar{\bar{q}_h}}
    \geq \beta_2.
  \end{equation}
\end{lemma}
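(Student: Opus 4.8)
The plan is to prove the inf-sup condition by the standard device of exhibiting, for each fixed $\bar{q}_h \in \bar{Q}_h$, an explicit candidate $\bm{v}_h \in \bm{V}_h$ that realises the supremum up to an $h$-independent constant. The candidate comes from the lifting operator of \cref{lem:lifting_op}: since $\bar{Q}_h(\hat{F}) = \mathbb{Q}_k(\hat{F})$, the restriction $\bar{q}_h|_{\partial K}$ belongs to $\mathcal{R}_k(\partial K)$ on every cell, so $L(\bar{q}_h|_{\partial K}) \in V_h(K)$ is well defined and satisfies $L(\bar{q}_h|_{\partial K}) \cdot n = \bar{q}_h$ on $\partial K$. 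The crucial point, flagged below, is that the lifting must be scaled by $h_K$ to match the $h_K$-weighting in the norms. I would therefore set $v_h|_K \coloneqq h_K L(\bar{q}_h|_{\partial K})$ cellwise and take $\bar{v}_h \coloneqq 0$, so that $\bm{v}_h \coloneqq (v_h, 0) \in \bm{V}_h$ (the boundary condition on $\bar{V}_h$ is trivially met). Note that $v_h$ need only lie in the discontinuous space $V_h$, so no inter-cell normal continuity is required.

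With this choice the numerator evaluates cleanly: using $(L\bar{q}_h)\cdot n = \bar{q}_h$,
\[
  b_2(v_h, \bar{q}_h) = \sumK \int_{\partial K} v_h \cdot n \, \bar{q}_h \dif s
  = \sumK h_K \normL{2}{\bar{q}_h}{\partial K}^2 = \normpbar{\bar{q}_h}^2,
\]
so it remains only to bound $\normv{\bm{v}_h}$ from above by $\normpbar{\bar{q}_h}$ up to a constant. For this I would estimate the two contributions to $\normv{\cdot}^2$ cellwise. Starting from the $L^2$ bound $\normL{2}{L\bar{q}_h}{K} \le c h_K^{1/2}\normL{2}{\bar{q}_h}{\partial K}$ of \cref{lem:lifting_op}, the $h_K$-scaling gives $\normL{2}{v_h}{K} \le c h_K^{3/2}\normL{2}{\bar{q}_h}{\partial K}$. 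The inverse inequality (\cref{lem:inverse_ineq}) then yields $\normL{2}{\nabla v_h}{K} \le c h_K^{-1}\normL{2}{v_h}{K} \le c h_K^{1/2}\normL{2}{\bar{q}_h}{\partial K}$, and the discrete trace inequality (\cref{lem:discrete_trace_ineq}) yields $\normL{2}{v_h}{\partial K} \le c h_K^{-1/2}\normL{2}{v_h}{K} \le c h_K\normL{2}{\bar{q}_h}{\partial K}$. Substituting these (and $\bar{v}_h = 0$, so $\bar{v}_h - v_h = -v_h$ on $\partial K$) into the definition of $\normv{\cdot}$ gives
\[
  \normv{\bm{v}_h}^2 \le c\sumK h_K \normL{2}{\bar{q}_h}{\partial K}^2 + c\alpha \sumK h_K\normL{2}{\bar{q}_h}{\partial K}^2 = c(1+\alpha)\normpbar{\bar{q}_h}^2.
\]
Combining the two displays, the test ratio is bounded below by $b_2(v_h,\bar{q}_h)/(\normv{\bm{v}_h}\normpbar{\bar{q}_h}) = \normpbar{\bar{q}_h}/\normv{\bm{v}_h} \ge c(1+\alpha)^{-1/2}$ for an $h$-independent $c > 0$, establishing the lemma with $\beta_2$ equal to this quantity.

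The only genuinely delicate point, and the step I would flag as the main obstacle, is the correct choice of the scaling factor. Lifting $\bar{q}_h$ unscaled produces a numerator $\sumK \normL{2}{\bar{q}_h}{\partial K}^2$ and a norm controlled by $\sumK h_K^{-1}\normL{2}{\bar{q}_h}{\partial K}^2$, so a Cauchy--Schwarz comparison against $\normpbar{\bar{q}_h}^2 = \sumK h_K\normL{2}{\bar{q}_h}{\partial K}^2$ runs in the wrong direction and yields no uniform lower bound. Inserting the weight $h_K$ in the definition of $v_h$ is exactly what aligns all three quantities, the numerator, $\normv{\bm{v}_h}^2$, and $\normpbar{\bar{q}_h}^2$, to the common cellwise weight $h_K\normL{2}{\bar{q}_h}{\partial K}^2$, after which the estimate closes with a single constant. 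The remainder is routine power-of-$h_K$ bookkeeping, valid because the lifting, inverse, and trace constants are all $h$-independent under the shape-regularity already assumed.
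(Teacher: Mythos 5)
Your proposal is correct and takes essentially the same route as the paper's proof: both choose the cellwise test function $\bm{v}_h = (h_K L\bar{q}_h, 0)$ from \cref{lem:lifting_op} and bound $\normv{(h_K L\bar{q}_h, 0)} \leq c \normpbar{\bar{q}_h}$ using the lifting bound together with \cref{lem:inverse_ineq,lem:discrete_trace_ineq}, which the paper states compactly and you work out in detail. The only point the paper makes explicit that you gloss is why $\bar{q}_h|_{\partial K} \in \mathcal{R}_k(\partial K)$, namely that $T_K$ is affine on each facet of a flat-faced quadrilateral, so the mapped facet functions are genuine polynomials; this is immediate in the present setting, so it is not a gap.
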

\begin{proof}
  We follow an approach similar to the proof of Lemma~3 in
  \cite{Rhebergen2018b}. First, note that $\bar{Q}_h$ consists of
  piecewise discontinuous polynomials because the geometric mapping for
  a flat-faced quadrilateral cell is affine on each facet. Hence, for
  any $\bar{q}_h \in \bar{Q}_h$ and for any element $K \in
  \mathcal{T}_h$, we have that $\bar{q}_h|_F \in \mathbb{P}_k(F) \;
  \forall F \in \mathcal{F}_K$ (in two-dimensions $\mathbb{Q}_k(F) =
  \mathbb{P}_k(F)$ \cite[p.~98]{Boffi2013}). Thus, from
  \cref{lem:inverse_ineq,lem:discrete_trace_ineq,lem:lifting_op} we have
  that for all $\bar{q}_h \in \bar{Q}_h$ there holds
  \begin{equation}
      \normv{(h_KL\bar{q}_h,0)} \le c \normpbar{\bar{q}_h}.
  \end{equation}
  Hence, we have
  \begin{align}
    \begin{split}
      \inf_{q_h \in \bar{Q}_h} \sup_{\boldsymbol{v}_h \in \boldsymbol{V}_h}
      \frac{b_2(v_h, \bar{q}_h)}{\normv{\boldsymbol{v}_h} \normpbar{\bar{q}_h}}
      &\geq \inf_{q_h \in \bar{Q}_h}
        \frac{\sumK \int_{\partial K} (h_KL\bar{q}_h) \cdot n \bar{q}_h \dif s}
        {\normv{(h_KL\bar{q}_h, 0)}\normpbar{\bar{q}_h}}
      \\
      &\geq c \inf_{q_h \in \bar{Q}_h}
        \frac{\sumK h_K\int_{\partial K} \bar{q}_h^2 \dif s}
        {\normpbar{\bar{q}_h}^2}
      \\
      &= c.
    \end{split}
  \end{align}
\end{proof}

\begin{lemma}[Boundedness of $b_1$] \label{lem:bound_b_1}
  For all $\bm{v}_h \in \bm{V}_h$ and all $\bm{p} \in Q(h) \times
  \bar{Q}(h)$
  \begin{equation}
      |b_1(v_h, p)| \leq \normv{\bm{v}_h} \normp{\bm{p}}.
  \end{equation}
\end{lemma}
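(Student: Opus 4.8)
The plan is to establish the bound through two applications of the Cauchy--Schwarz inequality --- one cell-by-cell and one over the mesh --- followed by identifying the resulting sums with the norms $\normv{\cdot}$ and $\normp{\cdot}$. I first note that $b_1$ depends on $\bm{p} = (p, \bar{p})$ only through its cell component $p$, so the task reduces to estimating $\sumK \int_K p \, \nabla \cdot v_h \dif x$.

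First I would apply the Cauchy--Schwarz inequality on each cell to get
\begin{equation*}
  \envert{b_1(v_h, p)} \leq \sumK \normL{2}{p}{K} \normL{2}{\nabla \cdot v_h}{K}.
\end{equation*}
Next I would control the divergence by the full gradient using $\nabla \cdot v_h = \mathrm{tr}(\nabla v_h)$, which yields $\normL{2}{\nabla \cdot v_h}{K} \leq \normL{2}{\nabla v_h}{K}$ up to a dimensional constant (discussed below). Then a discrete Cauchy--Schwarz inequality over the cells gives
\begin{equation*}
  \sumK \normL{2}{p}{K} \normL{2}{\nabla v_h}{K}
  \leq \del{\sumK \normL{2}{p}{K}^2}^{1/2}
       \del{\sumK \normL{2}{\nabla v_h}{K}^2}^{1/2}.
\end{equation*}
Finally I would identify the first factor with $\normL{2}{p}{\Omega_h} \leq \normp{\bm{p}}$, using $\normp{\bm{p}}^2 = \normL{2}{p}{\Omega_h}^2 + \normpbar{\bar{p}}^2$, and bound the second factor by $\normv{\bm{v}_h}$, since $\sumK \normL{2}{\nabla v_h}{K}^2$ is precisely the first term in $\normv{\bm{v}_h}^2$ and the facet-penalty term is nonnegative.

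The step I expect to require the most care is controlling the divergence by the gradient. Pointwise one only has $\envert{\mathrm{tr}(\nabla v_h)} \leq \sqrt{d}\,\envert{\nabla v_h}$, so the honest constant is $\sqrt{d}$; the integration-by-parts identity that would remove this factor is unavailable because $v_h$ is broken across facets and the boundary contributions do not cancel. To arrive at the stated constant $1$ one therefore adopts the convention that the matrix norm used in $\normL{2}{\nabla v}{K}$ dominates $\envert{\nabla \cdot v}$, or absorbs the harmless factor $\sqrt{d}$ into the constant. All remaining manipulations are routine, and both the facet-penalty contribution to $\normv{\cdot}$ and the facet-pressure contribution to $\normp{\cdot}$ are simply discarded as nonnegative terms.
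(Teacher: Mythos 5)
Your proof is correct and takes essentially the same approach as the paper, whose entire recorded proof is the single line ``a simple consequence of the Cauchy--Schwarz inequality.'' Your caution about the divergence--gradient step is well placed: pointwise $|\nabla \cdot v_h| \leq \sqrt{d}\,|\nabla v_h|$ is sharp for broken fields (take $v_h = x$ on a single cell; the integration-by-parts identity that gives constant $1$ for $[H_0^1(\Omega)]^d$ fields is indeed unavailable here), so the honest constant in the lemma is $\sqrt{d}$ rather than $1$ --- a harmless imprecision in the paper, since the lemma is only used to establish \cref{ass:boundedness_of_b_h}, which permits an arbitrary $h$-independent constant $C_b$.
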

\begin{proof}
  A simple consequence of the Cauchy--Schwarz inequality.
\end{proof}

\begin{lemma}[Boundedness of $b_2$] \label{lem:bound_b_2}
  For all $\bm{v}_h \in \bm{V}_h$ and all $\bm{p} \in Q(h) \times
  \bar{Q}(h)$, there exists a constant $c > 0$, independent of $h$, such
  that
  \begin{equation}
      |b_2(v_h, \bar{p})| \leq c \normv{\bm{v}_h} \normp{\bm{p}}.
  \end{equation}
\end{lemma}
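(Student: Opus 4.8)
The plan is to exploit the single-valuedness of the facet fields to replace the cell trace $v_h$ by the jump $v_h - \bar{v}_h$, which is precisely the quantity controlled by the penalty term in $\normv{\cdot}$. First I would observe that, since $\bm{v}_h = (v_h, \bar{v}_h) \in \bm{V}_h$, the facet velocity $\bar{v}_h$ is single-valued on each facet and vanishes on $\partial \Omega_h$, while $\bar{p} \in \bar{Q}(h)$ is also single-valued on each facet. Reorganising the sum $\sumK \int_{\partial K} \bar{v}_h \cdot n \bar{p} \dif s$ as a sum over facets, each interior facet receives contributions from its two adjacent cells whose outward unit normals are equal and opposite, so the terms cancel, while each boundary facet contributes nothing because $\bar{v}_h = 0$ there. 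Hence this quantity vanishes and we may write
\begin{equation*}
  b_2(v_h, \bar{p}) = \sumK \int_{\partial K} (v_h - \bar{v}_h) \cdot n \bar{p} \dif s.
\end{equation*}

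Next I would apply the Cauchy--Schwarz inequality on each $\partial K$, using $\abs{n} = 1$ so that $\norm{(v_h - \bar{v}_h) \cdot n}_{L^2(\partial K)} \leq \normL{2}{v_h - \bar{v}_h}{\partial K}$, and then insert the scaling factor $h_K^{-1/2} \cdot h_K^{1/2}$ before applying the discrete Cauchy--Schwarz inequality over the cells:
\begin{equation*}
  |b_2(v_h, \bar{p})| \leq \del{\sumK \frac{1}{h_K} \normL{2}{v_h - \bar{v}_h}{\partial K}^2}^{1/2} \del{\sumK h_K \normL{2}{\bar{p}}{\partial K}^2}^{1/2}.
\end{equation*}
The second factor is exactly $\normpbar{\bar{p}}$. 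For the first factor, comparing with the definition of $\normv{\cdot}$ gives $\sumK h_K^{-1} \normL{2}{v_h - \bar{v}_h}{\partial K}^2 \leq \alpha^{-1} \normv{\bm{v}_h}^2$. Combining these with the obvious bound $\normpbar{\bar{p}} \leq \normp{\bm{p}}$ yields the result with $c = \alpha^{-1/2}$, which is independent of $h$.

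The argument involves no delicate estimates and holds on any mesh; the only point requiring care is the facet-cancellation step, where one must verify that both $\bar{v}_h$ and $\bar{p}$ are genuinely single-valued across interior facets (so that the opposing normals produce exact cancellation) and that the boundary condition $\bar{v}_h = 0$ on $\partial \Omega_h$ eliminates the boundary facets. Since these properties are built into the definitions of $\bar{V}_h$ and $\bar{Q}(h)$, there is no substantive obstacle, and the lemma follows by elementary manipulations mirroring the proof of the analogous bound in \cite{Rhebergen2020}.
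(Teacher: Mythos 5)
Your proof is correct and takes essentially the same approach as the paper: the paper's own one-line proof cites exactly the two ingredients you develop in detail, namely the single-valuedness of the facet functions (which justifies your cancellation step replacing $v_h$ by $v_h - \bar{v}_h$ in the facet integrals) and the Cauchy--Schwarz inequality. Your write-up, including the explicit constant $c = \alpha^{-1/2}$, is a faithful elaboration of what the paper leaves implicit.
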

\begin{proof}
  A simple consequence of the Cauchy--Schwarz inequality and the fact
  that the facet functions are single-valued.
\end{proof}

We also require a reduced version of Theorem 3.1 from \cite{Howell2011},
which is stated below for convenience.

\begin{theorem} \label{th:split_stability}
  Let $U$, $P_1$, and $P_2$ be reflexive Banach spaces, and let $b_1: U
  \times P_1 \to \mathbb{R}$ and $b_2: U \times P_2 \to \mathbb{R}$ be
  bilinear and bounded. Also, let
  \begin{equation} \label{eq:z_b_2}
    Z_{b_2} \coloneqq \left\{v \in U; \; b_2(v, p_2) = 0 \;
    \; \forall p_2 \in P_2 \right\} \subset U.
  \end{equation}
  Then, the following are equivalent:
  \begin{enumerate}
    \item There exists a $c > 0$ such that
    \begin{equation}
        \sup_{v \in U} \frac{b_1(v, p_1) + b_2(v, p_2)}{||v||_U}
        \geq c \left(||p_1||_{P_1} + ||p_2||_{P_2}\right)
        \quad \forall (p_1, p_2) \in P_1 \times P_2
    \end{equation}
    \item There exists a $c > 0$ such that
    \begin{equation}
        \sup_{v \in Z_{b_2}} \frac{b_1(v, p_1)}{||v||_U}
        \geq c ||p_1||_{P_1} \; \forall p_1 \in P_1
        \quad \textnormal{and} \quad
        \sup_{v \in U} \frac{b(v, p_2)}{||v||_U}
        \geq c ||p_2||_{P_2} \; \forall p_2 \in P_2
    \end{equation}
  \end{enumerate}
\end{theorem}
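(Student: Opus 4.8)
The plan is to prove the two implications separately, using elementary Banach-space duality. It is convenient to introduce the adjoint operators $B_i^*: P_i \to U'$ for $i \in \{1,2\}$ defined by $\langle B_i^* p_i, v \rangle = b_i(v, p_i)$, so that $\sup_{v \in U} b_i(v,p_i)/\|v\|_U = \|B_i^* p_i\|_{U'}$. In this language condition~1 reads $\|B_1^* p_1 + B_2^* p_2\|_{U'} \geq c(\|p_1\|_{P_1} + \|p_2\|_{P_2})$ for all $(p_1, p_2)$, i.e.\ the combined adjoint is bounded below.

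For the direction $2 \Rightarrow 1$ I would argue constructively. Given $(p_1, p_2)$, use the first inf--sup in condition~2 to choose $v_1 \in Z_{b_2}$ with $\|v_1\|_U = 1$ and $b_1(v_1, p_1) \geq \tfrac{c}{2}\|p_1\|_{P_1}$, and the second to choose $v_2 \in U$ with $\|v_2\|_U = 1$ and $b_2(v_2, p_2) \geq \tfrac{c}{2}\|p_2\|_{P_2}$. Test with $v = v_1 + \delta v_2$. Since $v_1 \in Z_{b_2}$ gives $b_2(v_1, p_2) = 0$, bilinearity yields
\begin{equation}
  b_1(v, p_1) + b_2(v, p_2) = b_1(v_1, p_1) + \delta b_1(v_2, p_1) + \delta b_2(v_2, p_2) \geq \Bigl(\tfrac{c}{2} - \delta M_1\Bigr)\|p_1\|_{P_1} + \tfrac{c\delta}{2}\|p_2\|_{P_2},
\end{equation}
where $M_1$ is the boundedness constant of $b_1$ and I used $|b_1(v_2, p_1)| \leq M_1 \|p_1\|_{P_1}$ because $\|v_2\|_U = 1$. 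Choosing $\delta = c/(4M_1)$ makes the coefficient of $\|p_1\|_{P_1}$ equal to $c/4 > 0$, and since $\|v\|_U \leq 1 + \delta$, dividing through gives condition~1 with a constant depending only on $c$ and $M_1$.

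For $2 \Leftarrow 1$, the second statement of condition~2 is immediate: setting $p_1 = 0$ in condition~1 gives $\|B_2^* p_2\|_{U'} \geq c\|p_2\|_{P_2}$, which also records that $B_2^*$ is bounded below. The substance is the first statement. I would observe that the quantity to bound, $\sup_{v \in Z_{b_2}} b_1(v,p_1)/\|v\|_U$, is exactly the norm of the restriction of the functional $B_1^* p_1 \in U'$ to the closed subspace $Z_{b_2} = \ker B_2$. By the Hahn--Banach duality between restriction and quotient, this norm equals the distance $\inf_{\phi \in Z_{b_2}^{\perp}} \|B_1^* p_1 - \phi\|_{U'}$, where $Z_{b_2}^{\perp} \subset U'$ is the annihilator of $Z_{b_2}$. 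The key step is then to identify $Z_{b_2}^{\perp} = \mathrm{range}(B_2^*) = \{B_2^* p_2 : p_2 \in P_2\}$, after which
\begin{equation}
  \sup_{v \in Z_{b_2}} \frac{b_1(v,p_1)}{\|v\|_U} = \inf_{p_2 \in P_2} \|B_1^* p_1 + B_2^* p_2\|_{U'} \geq \inf_{p_2 \in P_2} c\bigl(\|p_1\|_{P_1} + \|p_2\|_{P_2}\bigr) \geq c\|p_1\|_{P_1},
\end{equation}
which is precisely the first statement of condition~2.

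The main obstacle is justifying the annihilator identification $(\ker B_2)^{\perp} = \mathrm{range}(B_2^*)$. In general one only has $(\ker B_2)^{\perp} = \overline{\mathrm{range}(B_2^*)}$, so the equality requires $\mathrm{range}(B_2^*)$ to be closed. This is exactly where the hypotheses enter: condition~1 forces $B_2^*$ to be bounded below, and since $U$ and $P_2$ are reflexive, the closed range theorem (equivalently, the fact that $B_2^*$ is bounded below iff $B_2$ is surjective) guarantees that $B_2$ has closed range and hence that $\mathrm{range}(B_2^*)$ is closed. Once this functional-analytic point is secured, the remaining manipulations are the routine Hahn--Banach and inf--sup estimates indicated above.
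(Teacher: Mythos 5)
Your proposal is correct, but there is nothing in the paper to compare it against line by line: the paper gives no proof of this theorem, quoting it as a reduced version of Theorem~3.1 of \citet{Howell2011}, so your argument is a self-contained substitute for that citation rather than a variant of an in-paper proof. Your direction $2 \Rightarrow 1$ is the standard constructive estimate and is watertight: testing with $v = v_1 + \delta v_2$, using $b_2(v_1, p_2) = 0$ for $v_1 \in Z_{b_2}$, and tuning $\delta = c/(4M_1)$ yields condition~1 with constant $\min\{c/4,\, c^2/(8M_1)\}/(1+\delta)$. Your direction $1 \Rightarrow 2$, via the isometry $Z_{b_2}' \cong U'/Z_{b_2}^{\perp}$ and the identification $Z_{b_2}^{\perp} = \mathrm{range}(B_2^*)$, is also sound, with one imprecision worth flagging: the ``general'' identity you invoke, $(\ker B_2)^{\perp} = \overline{\mathrm{range}(B_2^*)}$, holds in arbitrary Banach spaces only with the weak-$*$ closure on the right-hand side; equating the weak-$*$ closure with the norm closure is precisely where reflexivity of $U$ enters (weak-$*$ coincides with weak on $U'$, and norm-closed subspaces are weakly closed), while reflexivity of $P_2$ is what lets you regard $B_2^*$ as an operator on $P_2$ rather than on $P_2''$. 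Note also that you do not need the full closed range theorem for the closedness step: boundedness below of $B_2^*$ (condition~1 with $p_1 = 0$) gives closedness of $\mathrm{range}(B_2^*)$ by an elementary Cauchy-sequence argument. Finally, this is essentially the same functional-analytic machinery (closed range, annihilator/quotient duality) that Howell and Walkington deploy for their more general multi-form statement, so what your proof buys is a compact, self-contained verification of exactly the reduced two-form version that the paper needs, at the cost of none of the generality of the cited result.
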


\subsection{Proof of assumptions}

We are now able to prove \crefrange{ass:consistency}{ass:approx_props}
for meshes made of flat-faced quadrilateral cells. Recall that
\cref{ass:disc_div_free_exact_div_free} was proved in
\cref{sec:ensuring_div_free}.

\begin{proof}[\textbf{Proof of \cref{ass:consistency}}]
  The proof is identical to \cite[Lemma~4.1]{Rhebergen2017}, so we omit
  it here.
\end{proof}

\begin{proof}[\textbf{Proof of \cref{ass:norm_equiv}}]
  The result follows from $\bm{V}_h$ being finite dimensional,
  \cref{lem:discrete_trace_ineq,lem:broken_poincare_ineq}.
\end{proof}

\begin{proof} [\textbf{Proof of \cref{ass:stability_of_a_h}}]
  The result follows from a standard argument; consider $a_h(\bm{v}_h,
  \bm{v}_h)$, where $\bm{v}_h \in \bm{V}_h$, and apply the
  Cauchy--Schwarz inequality, Young's inequality,
  \cref{lem:discrete_trace_ineq}, and \cref{lem:broken_poincare_ineq},
  yielding
  \begin{equation}
    a_h(\bm{v}_h, \bm{v}_h) \geq \nu \left(\frac{1}{2} - \frac{c}{\alpha}\right)
              \normv{\bm{v}_h}^2,
  \end{equation}
  where $c$ depends on the constants in
  \cref{lem:discrete_trace_ineq,lem:broken_poincare_ineq}. Therefore,
  provided the penalty parameter is chosen such that $\alpha > 2 c$ we
  have that \cref{ass:stability_of_a_h} holds with $\beta_v =
  \frac{1}{2} - \frac{c}{\alpha} > 0$.
\end{proof}

\begin{proof}[\textbf{Proof of \cref{ass:boundedness_of_a_h}}]
  A simple consequence of the Cauchy--Schwarz inequality,
  \cref{lem:discrete_trace_ineq}, and \cref{lem:broken_poincare_ineq},
  yielding
  \begin{equation}
      |a_h(\bm{u}, \bm{v}_h)| \leq \nu c(1 + \alpha^{-\frac{1}{2}})
      \normvp{\bm{u}} \normv{\bm{v}_h},
  \end{equation}
  where $c$ depends on the constants in
  \cref{lem:discrete_trace_ineq,lem:broken_poincare_ineq}.
\end{proof}

\begin{proof}[\textbf{Proof of \cref{ass:stability_of_b_h}}]
  The proof of the assumption is almost identical to the proof of Lemma
  8 in \cite{Rhebergen2020}. Apply \cref{th:split_stability} with $U
  \coloneqq \bm{V}_h$, $P_1 \coloneqq Q_h$, $P_2 \coloneqq \bar{Q}_h$,
  and $Z_{b_2} \coloneqq Y_h \times \bar{V}_h$. \Cref{eq:z_b_2} is
  satisfied because every function in $Y_h$ has continuous normal
  components across facets. \Cref{lem:stability_b_1,lem:stability_b_2}
  then imply that \cref{ass:stability_of_b_h} holds by equivalence of
  items 1 and 2 in \cref{th:split_stability}.
\end{proof}

\begin{proof}[\textbf{Proof of \cref{ass:boundedness_of_b_h}}]
  The result follows trivially from \cref{lem:bound_b_1,lem:bound_b_2}.
\end{proof}

\begin{proof}[\textbf{Proof of \cref{ass:approx_props}}]
  The result is an immediate consequence of
  \cref{lem:gen_quad_v_inf_bound,lem:gen_quad_q_inf_bound}.
\end{proof}

\section{Numerical examples}
\label{sec:num_ex}

We present some numerical results to support our theoretical analysis.
We take $\alpha$ to be $16 k^2$ and consider meshes with quadrilateral
cells. We use the Raviart--Thomas based element discussed in
\cref{sec:ensuring_div_free,sec:proof_of_ass_on_quads} since it
satisfies the assumptions required to apply \cref{th:error_estimate} on
these meshes. FEniCSx
\cite{scroggs:2022,scroggs:2022b,fenics:book,dolfinx_web} was used for
the first two examples, and NGSolve \cite{Schoberl2014} was used for the
final example to demonstrate mixed topology meshes. Boundary data is
interpolated into $\bar{V}_h$, and $f$ is evaluated at quadrature
points. In the FEniCSx examples, the linear system of equations is
solved using the MUMPS \cite{MUMPS:1,MUMPS:2} sparse direct solver via
PETSc \cite{petsc-web-page,petsc-user-ref,petsc-efficient} and we set
the appropriate flags to handle the nullspace of constants. In the
NGSolve example, we use UMFPACK \cite{umfpack}. The code for the
numerical examples is available in \cite{code}.

\subsection{Stokes flow in a square domain}

Let $\Omega \coloneqq (0, 1)^2$ and choose the data such that the exact
solution to the Stokes problem is given by
\begin{equation}
    u(x) =
    \begin{pmatrix}
        \sin(\pi x_1) \sin(\pi x_2) \\
        \cos(\pi x_1) \cos(\pi x_2)
    \end{pmatrix}
    \quad \textnormal{and} \quad
    p(x) = \sin(\pi x_1) \cos(\pi x_2) + c_p,
\end{equation}
where $c_p$ is a constant such that the mean pressure is zero.

To investigate the convergence of the method, a family of meshes made of
trapezoidal cells are used. Each trapezium in every mesh is similar, so
the geometric mappings do not tend to affine as the mesh is refined.
\Cref{fig:square2_sol} shows the computed solution with $k=2$ and $\nu =
1$ for one of the meshes in the family.
\begin{figure}
    \centering
    \begin{subfigure}[b]{0.4\textwidth}
        \centering
        \includegraphics[width=\textwidth]{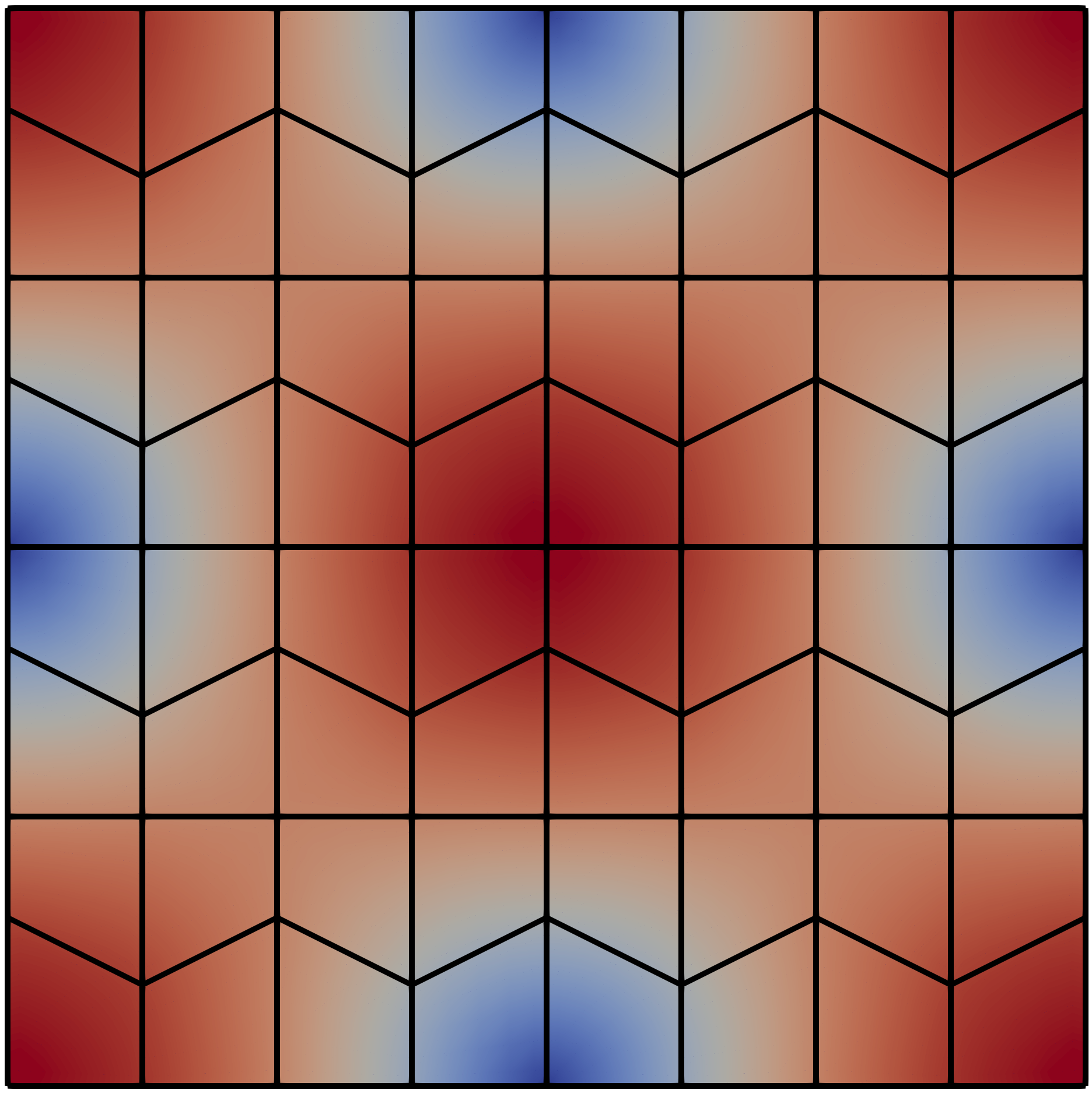}
        \caption{Velocity magnitude}
    \end{subfigure}
    \hfill
    \begin{subfigure}[b]{0.4\textwidth}
        \centering
        \includegraphics[width=\textwidth]{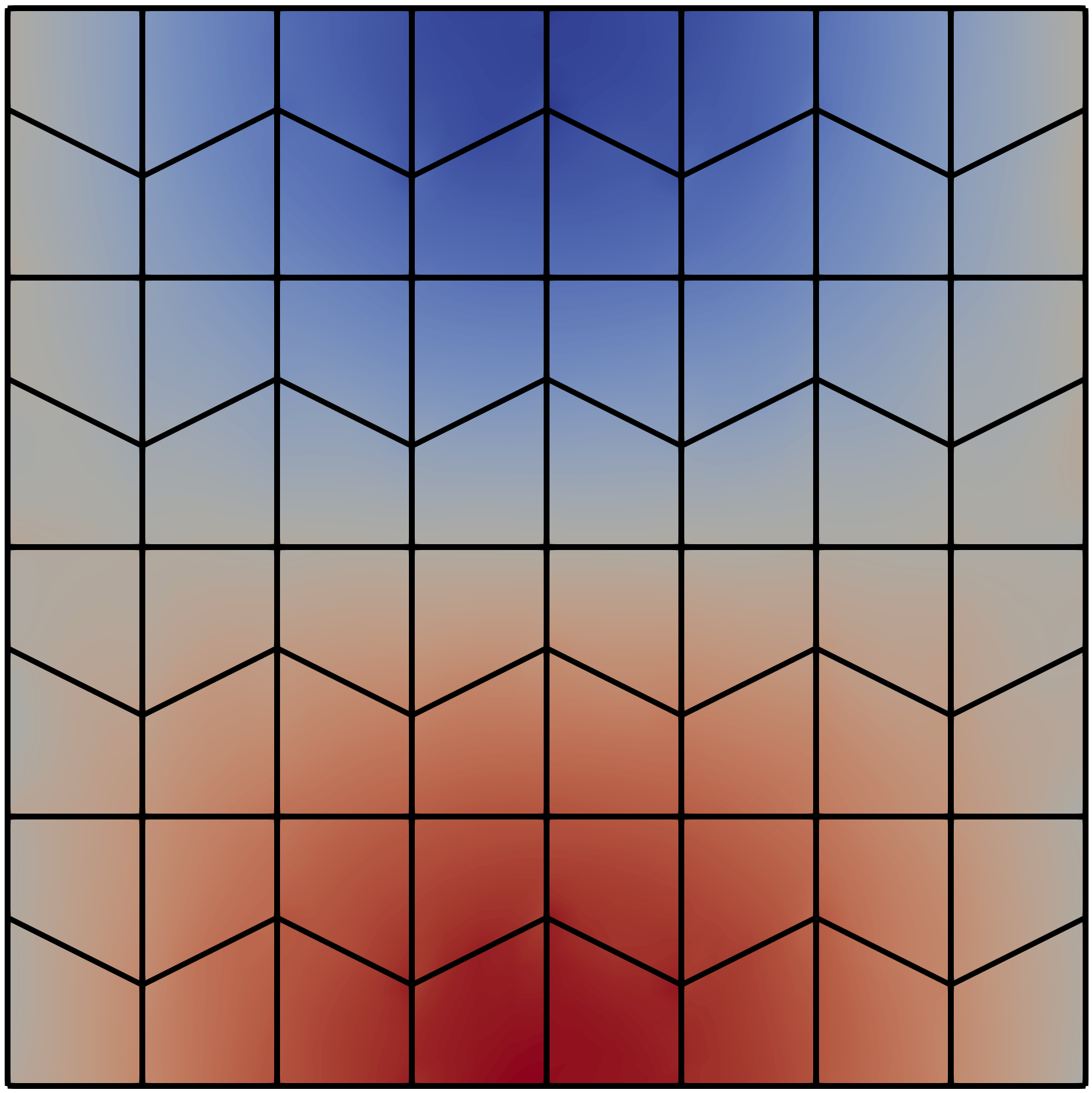}
        \caption{Pressure}
    \end{subfigure}
    \caption{Computed solution to a simple Stokes problem on a mesh of
    trapezium shaped elements.}
    \label{fig:square2_sol}
\end{figure}
\Cref{fig:square_2_conv_u_p} shows the $L^2(\Omega)$-norm of the error
in the velocity field, $e_u$, and pressure field, $e_p$, as a function
of $h$ for different values of $k$.
\begin{figure}
    \centering
    \begin{subfigure}[b]{0.49\textwidth}
        \centering
        \resizebox{\textwidth}{!}{
\begin{tikzpicture}

\definecolor{color0}{rgb}{0,0.75,0.75}

\begin{axis}[
legend style={at={(0.5, -0.25)}, anchor=north, legend columns=5},
log basis x={10},
log basis y={10},
tick align=outside,
tick pos=left,
x dir=reverse,
x grid style={white!69.0196078431373!black},
xlabel={\(\displaystyle h\)},
xmin=0.01, xmax=0.4,
xmode=log,
xtick style={color=black},
y grid style={white!69.0196078431373!black},
ylabel={\(\displaystyle e_u\)},
ymin=1e-12, ymax=0.2,
ymode=log,
ytick style={color=black}
]
\addlegendimage{white, only marks, mark=*}
\addlegendentry{$k = $ \;}
\addlegendimage{red, only marks, mark=*, mark size=3}
\addlegendentry{\; $1$ \;}
\addlegendimage{green!50!black, only marks, mark=triangle*, mark size=3}
\addlegendentry{\; $2$ \;}
\addlegendimage{blue, only marks, mark=square*, mark size=3}
\addlegendentry{\; $3$ \;}
\addlegendimage{color0, only marks, mark=diamond*, mark size=3}
\addlegendentry{\; $4$ \;}
\addplot [semithick, red, mark=*, mark size=3, mark options={solid}]
table {%
0.25 0.0439547318014267
0.125 0.011161637687467
0.0625 0.00280695263365612
0.03125 0.000702961551694355
0.015625 0.000175835233578192
};
\addplot [semithick, green!50!black, mark=triangle*, mark size=3, mark options={solid}]
table {%
0.25 0.00316509215910099
0.125 0.000381711044023056
0.0625 4.67841341382072e-05
0.03125 5.79222184995535e-06
0.015625 7.20609563106093e-07
};
\addplot [semithick, blue, mark=square*, mark size=3, mark options={solid}]
table {%
0.25 0.000197153724557929
0.125 1.22939877694143e-05
0.0625 7.65146286978468e-07
0.03125 4.76825035729661e-08
0.015625 2.97522998338422e-09
};
\addplot [semithick, color0, mark=diamond*, mark size=3, mark options={solid}]
table {%
0.25 1.00842824086361e-05
0.125 3.14131101336275e-07
0.0625 9.77873420762271e-09
0.03125 3.05274716244734e-10
0.015625 9.70934673681963e-12
};
\logLogSlopeTriangle{0.84}{0.1}{0.82}{2}{black};
\logLogSlopeTriangle{0.84}{0.1}{0.62}{3}{black};
\logLogSlopeTriangle{0.84}{0.1}{0.43}{4}{black};
\logLogSlopeTriangle{0.84}{0.1}{0.23}{5}{black};
\end{axis}

\end{tikzpicture}}
        \caption{Velocity field}
    \end{subfigure}
    \hfill
    \begin{subfigure}[b]{0.49\textwidth}
        \centering
        \resizebox{\textwidth}{!}{
\begin{tikzpicture}

\definecolor{color0}{rgb}{0,0.75,0.75}

\begin{axis}[
legend style={at={(0.5, -0.25)}, anchor=north, legend columns=5},
log basis x={10},
log basis y={10},
tick align=outside,
tick pos=left,
x dir=reverse,
x grid style={white!69.0196078431373!black},
xlabel={\(\displaystyle h\)},
xmin=0.01, xmax=0.4,
xmode=log,
xtick style={color=black},
y grid style={white!69.0196078431373!black},
ylabel={\(\displaystyle e_p\)},
ymin=1e-9, ymax=1,
ymode=log,
ytick style={color=black}
]
\addlegendimage{white, only marks, mark=*}
\addlegendentry{$k = $ \;}
\addlegendimage{red, only marks, mark=*, mark size=3}
\addlegendentry{\; $1$ \;}
\addlegendimage{green!50!black, only marks, mark=triangle*, mark size=3}
\addlegendentry{\; $2$ \;}
\addlegendimage{blue, only marks, mark=square*, mark size=3}
\addlegendentry{\; $3$ \;}
\addlegendimage{color0, only marks, mark=diamond*, mark size=3}
\addlegendentry{\; $4$ \;}
\addplot [semithick, red, mark=*, mark size=3, mark options={solid}]
table {%
0.25 0.427037206314468
0.125 0.165527179510691
0.0625 0.0694845215782169
0.03125 0.0318217119397989
0.015625 0.0153224628877838
};
\addplot [semithick, green!50!black, mark=triangle*, mark size=3, mark options={solid}]
table {%
0.25 0.0684601000405118
0.125 0.0150614778409351
0.0625 0.00363250415426264
0.03125 0.000894773964943089
0.015625 0.000222112971525111
};
\addplot [semithick, blue, mark=square*, mark size=3, mark options={solid}]
table {%
0.25 0.0100525195302451
0.125 0.00108775046458195
0.0625 0.000127809991256094
0.03125 1.55963703715211e-05
0.015625 1.93114930180795e-06
};
\addplot [semithick, color0, mark=diamond*, mark size=3, mark options={solid}]
table {%
0.25 0.000568713587551805
0.125 3.34096494843463e-05
0.0625 2.06105507053671e-06
0.03125 1.29218894662298e-07
0.015625 9.50834785326159e-09
};
\logLogSlopeTriangle{0.84}{0.1}{0.87}{1}{black};
\logLogSlopeTriangle{0.84}{0.1}{0.67}{2}{black};
\logLogSlopeTriangle{0.84}{0.1}{0.47}{3}{black};
\logLogSlopeTriangle{0.84}{0.1}{0.24}{4}{black};
\end{axis}

\end{tikzpicture}}
        \caption{Pressure field}
    \end{subfigure}
    \caption{The $L^2(\Omega)$-norm of the error in the velocity and
    pressure fields as a function $h$. The velocity and pressure
    converge at rates $k + 1$ and $k$, respectively.}
    \label{fig:square_2_conv_u_p}
\end{figure}
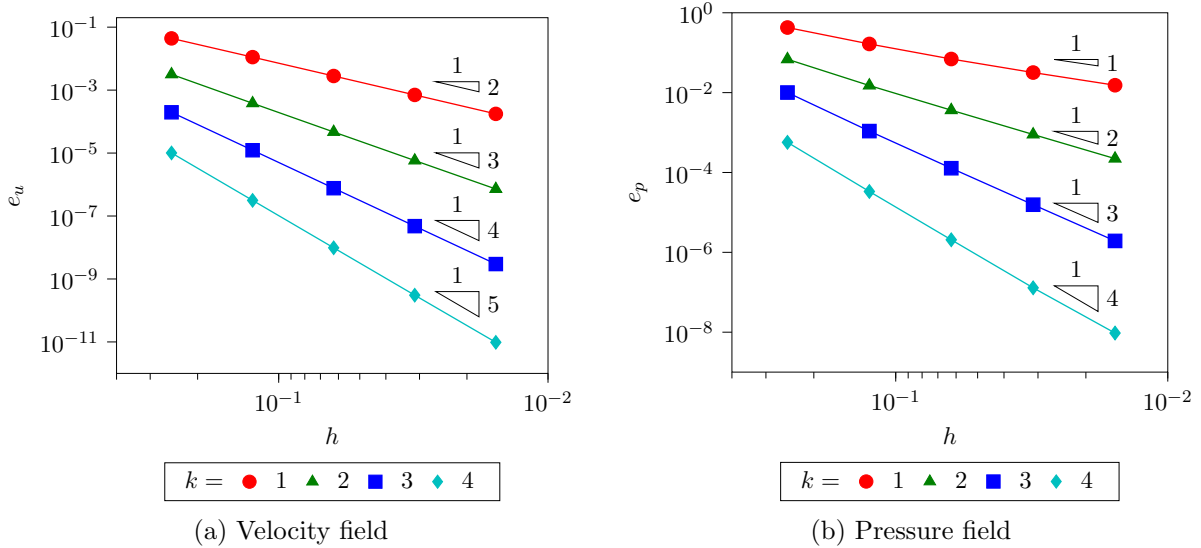
The observed rate of convergence is $k + 1$ for the velocity field and
$k$ for the pressure field, which is in agreement with the theory
presented in \cref{sec:press_rob_error_est}.

\Cref{fig:square_2_conv_div_jump} shows the $L^2(\Omega)$-norm of the
error in the divergence of the velocity field, $e_{\nabla \cdot u}$, and
the $L^2(\Gamma_h)$-norm of the jump in the normal component of the
velocity across element boundaries, $e_{\llbracket u \rrbracket}$, as a
function of $h$. We compare the present method with a naive extension of
the method from \citet{Rhebergen2018a} to quadrilateral cells, which
differs from the present method in that firstly, the finite element
spaces are taken to be
\begin{equation*}
    V_h(\hat{K}) \coloneqq [\mathbb{Q}_k(\hat{K})]^d, \quad
    \bar{V}_h(\hat{F}) \coloneqq [\mathbb{Q}_k(\hat{F})]^d, \quad
    Q_h(\hat{K}) \coloneqq \mathbb{Q}_{k-1}(\hat{K}), \quad
    \textnormal{and} \quad \bar{Q}_h(\hat{F}) \coloneqq \mathbb{Q}_{k}(\hat{F});
\end{equation*}
and secondly, composition is used to map from the reference to the
physical velocity space, rather than the contravariant Piola transform.
\begin{figure}
    \centering
    \begin{subfigure}[b]{0.49\textwidth}
        \centering
        \resizebox{\textwidth}{!}{
\begin{tikzpicture}

\definecolor{color0}{rgb}{0,0.75,0.75}

\begin{axis}[
legend style={at={(0.44, -0.25)}, anchor=north, legend columns=6},
log basis x={10},
log basis y={10},
tick align=outside,
tick pos=left,
x dir=reverse,
x grid style={white!69.0196078431373!black},
xlabel={\(\displaystyle h\)},
xmin=0.01, xmax=0.4,
xmode=log,
xtick style={color=black},
y grid style={white!69.0196078431373!black},
ylabel={\(\displaystyle e_{\nabla \cdot u}\)},
ymin=1e-15, ymax=3.17900394440214,
ymode=log,
ytick style={color=black}
]
\addlegendimage{black, line legend}
\addlegendentry{PM \;}
\addlegendimage{black, line legend, dashed}
\addlegendentry{RW \; $k = $ \;}
\addlegendimage{red, only marks, mark=*, mark size=3}
\addlegendentry{\; $1$ \;}
\addlegendimage{green!50!black, only marks, mark=triangle*, mark size=3}
\addlegendentry{\; $2$ \;}
\addlegendimage{blue, only marks, mark=square*, mark size=3}
\addlegendentry{\; $3$ \;}
\addlegendimage{color0, only marks, mark=diamond*, mark size=3}
\addlegendentry{\; $4$}
\addplot [semithick, red, mark=*, mark size=3, mark options={solid}]
table {%
0.25 5.35640285798828e-15
0.125 7.29772125538404e-14
0.0625 3.63351184302517e-14
0.03125 6.92576026255107e-14
0.015625 2.1265287513277e-13
};
\addplot [semithick, green!50!black, mark=triangle*, mark size=3, mark options={solid}]
table {%
0.25 6.077738987833e-14
0.125 2.65295279900785e-14
0.0625 2.72298695916976e-13
0.03125 5.45597921952871e-13
0.015625 8.79815363761102e-13
};
\addplot [semithick, blue, mark=square*, mark size=3, mark options={solid}]
table {%
0.25 1.78150710027407e-14
0.125 5.74504726903893e-14
0.0625 8.36670958712906e-12
0.03125 2.07317740593888e-13
0.015625 1.68571155634211e-12
};
\addplot [semithick, color0, mark=diamond*, mark size=3, mark options={solid}]
table {%
0.25 7.73347597867658e-14
0.125 1.26514883436291e-13
0.0625 1.48695890484373e-12
0.03125 1.05784658818933e-10
0.015625 1.57596899578016e-12
};
\addplot [semithick, red, dashed, mark=*, mark size=3, mark options={solid}]
table {%
0.25 0.510396359511402
0.125 0.268959216399347
0.0625 0.135941124697004
0.03125 0.0681194540084616
0.015625 0.034075092349399
};
\addplot [semithick, green!50!black, dashed, mark=triangle*, mark size=3, mark options={solid}]
table {%
0.25 0.065004747036219
0.125 0.0166545020005149
0.0625 0.00419685166136032
0.03125 0.00105230661783215
0.015625 0.000263397892547828
};
\addplot [semithick, blue, dashed, mark=square*, mark size=3, mark options={solid}]
table {%
0.25 0.0054652850119785
0.125 0.000693766742595781
0.0625 8.70634113998566e-05
0.03125 1.0894010593415e-05
0.015625 1.3621217938246e-06
};
\addplot [semithick, color0, dashed, mark=diamond*, mark size=3, mark options={solid}]
table {%
0.25 0.00035258730106345
0.125 2.2314617586566e-05
0.0625 1.39914588583748e-06
0.03125 8.75210432948192e-08
0.015625 5.47136011690383e-09
};
\end{axis}

\end{tikzpicture}}
        \caption{Divergence error}
    \end{subfigure}
    \hfill
    \begin{subfigure}[b]{0.49\textwidth}
        \centering
        \resizebox{\textwidth}{!}{
\begin{tikzpicture}

\definecolor{color0}{rgb}{0,0.75,0.75}

\begin{axis}[
legend style={at={(0.44, -0.25)}, anchor=north, legend columns=6},
log basis x={10},
log basis y={10},
tick align=outside,
tick pos=left,
x dir=reverse,
x grid style={white!69.0196078431373!black},
xlabel={\(\displaystyle h\)},
xmin=0.01, xmax=0.4,
xmode=log,
xtick style={color=black},
y grid style={white!69.0196078431373!black},
ylabel={\(\displaystyle e_{\llbracket u \rrbracket}\)},
ymin=1e-16, ymax=1,
ymode=log,
ytick style={color=black}
]
\addlegendimage{black, line legend}
\addlegendentry{PM \;}
\addlegendimage{black, line legend, dashed}
\addlegendentry{RW \; $k = $ \;}
\addlegendimage{red, only marks, mark=*, mark size=3}
\addlegendentry{\; $1$ \;}
\addlegendimage{green!50!black, only marks, mark=triangle*, mark size=3}
\addlegendentry{\; $2$ \;}
\addlegendimage{blue, only marks, mark=square*, mark size=3}
\addlegendentry{\; $3$ \;}
\addlegendimage{color0, only marks, mark=diamond*, mark size=3}
\addlegendentry{\; $4$}
\addplot [semithick, red, mark=*, mark size=3, mark options={solid}]
table {%
0.25 4.41893585041917e-16
0.125 7.79117572437499e-15
0.0625 3.5177855872832e-15
0.03125 4.492156586025e-15
0.015625 1.09626732133525e-14
};
\addplot [semithick, green!50!black, mark=triangle*, mark size=3, mark options={solid}]
table {%
0.25 4.32410451915502e-15
0.125 2.18105135162513e-15
0.0625 1.18561648897305e-14
0.03125 1.64020720270878e-14
0.015625 1.9411913921882e-14
};
\addplot [semithick, blue, mark=square*, mark size=3, mark options={solid}]
table {%
0.25 1.92697027624972e-15
0.125 1.63024824712137e-15
0.0625 1.42117308593428e-13
0.03125 6.18535368934408e-15
0.015625 4.69106097118512e-14
};
\addplot [semithick, color0, mark=diamond*, mark size=3, mark options={solid}]
table {%
0.25 2.54368109986053e-14
0.125 2.05327488590569e-14
0.0625 1.02513302927631e-13
0.03125 2.02215308164438e-12
0.015625 1.65047635160199e-14
};
\addplot [semithick, red, dashed, mark=*, mark size=3, mark options={solid}]
table {%
0.25 5.94099529202108e-16
0.125 1.28351999317882e-15
0.0625 1.83577473494226e-15
0.03125 3.99290131932392e-15
0.015625 1.72074008930544e-14
};
\addplot [semithick, green!50!black, dashed, mark=triangle*, mark size=3, mark options={solid}]
table {%
0.25 2.17407072155183e-15
0.125 9.37946254718417e-16
0.0625 1.00887823130235e-15
0.03125 7.42108087493859e-14
0.015625 2.94914878567891e-15
};
\addplot [semithick, blue, dashed, mark=square*, mark size=3, mark options={solid}]
table {%
0.25 5.16489217901429e-16
0.125 8.58985327080527e-16
0.0625 1.50323097152034e-15
0.03125 2.38352966803558e-15
0.015625 3.28551856360102e-15
};
\addplot [semithick, color0, dashed, mark=diamond*, mark size=3, mark options={solid}]
table {%
0.25 6.55007659329193e-16
0.125 1.34877206930437e-15
0.0625 2.66150100707619e-15
0.03125 4.68089177755934e-15
0.015625 5.12818275051251e-15
};
\end{axis}

\end{tikzpicture}}
        \caption{Jump error}
    \end{subfigure}
    \caption{The divergence and jump errors as a function of $h$ for the
    present method (PM) and the method from \cite{Rhebergen2018a} (RW).
    Only the present method conserves mass exactly.}
    \label{fig:square_2_conv_div_jump}
\end{figure}
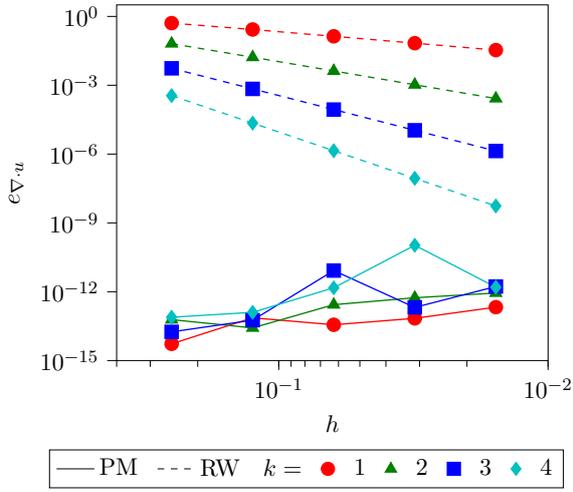
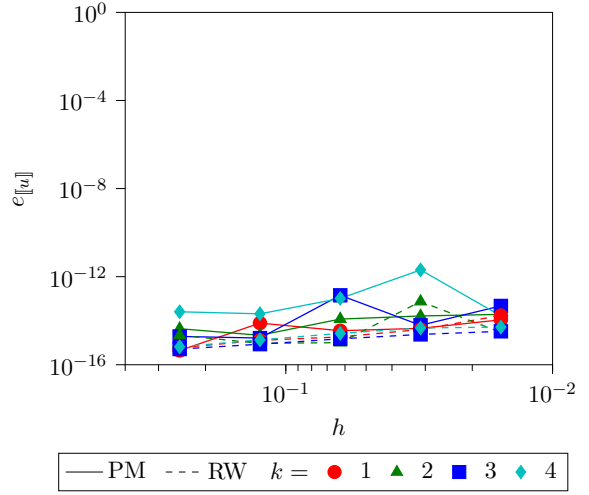
The computed velocity field for the present method is divergence-free to
machine precision, which is in agreement with the theory presented in
\cref{sec:ensuring_div_free}. By contrast, whilst the method from
\cite{Rhebergen2018a} gives a velocity field with continuous normal
component across element boundaries to machine precision, the
approximate velocity field is not exactly divergence-free.

\Cref{fig:square_2_press_rob} shows $e_u$ and $e_p$ as a function of $h$
for the present method and the scheme presented in \cite{Rhebergen2018a}
with $k=2$ and $\nu \in \cbr{1, 10^{-3}, 10^{-6}}$.
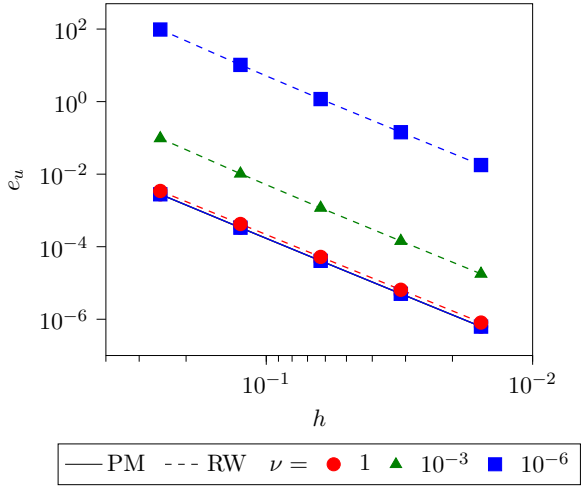
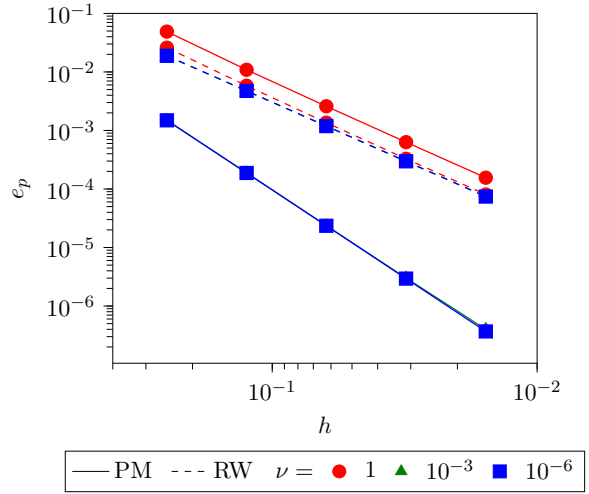
\begin{figure}
    \centering
    \begin{subfigure}[b]{0.49\textwidth}
        \centering
        \resizebox{\textwidth}{!}{
\begin{tikzpicture}

\begin{axis}[
legend style={at={(0.5, -0.25)}, anchor=north, legend columns=5},
log basis x={10},
log basis y={10},
tick align=outside,
tick pos=left,
x dir=reverse,
x grid style={white!69.0196078431373!black},
xlabel={\(\displaystyle h\)},
xmin=0.01, xmax=0.4,
xmode=log,
xtick style={color=black},
y grid style={white!69.0196078431373!black},
ylabel={\(\displaystyle e_u\)},
ymin=1e-07, ymax=5e2,
ymode=log,
ytick style={color=black}
]
\addlegendimage{black, line legend}
\addlegendentry{PM \;}
\addlegendimage{black, line legend, dashed}
\addlegendentry{RW \; $\nu = $ \;}
\addlegendimage{red, only marks, mark=*, mark size=3}
\addlegendentry{\; $1$ \;}
\addlegendimage{green!50!black, only marks, mark=triangle*, mark size=3}
\addlegendentry{\; $10^{-3}$ \;}
\addlegendimage{blue, only marks, mark=square*, mark size=3}
\addlegendentry{\; $10^{-6}$}
\addplot [semithick, red, mark=*, mark size=3, mark options={solid}]
table {%
0.25 0.00278724032049497
0.125 0.000334202224078561
0.0625 4.08050180648948e-05
0.03125 5.04122116290091e-06
0.015625 6.2648657905258e-07
};
\addplot [semithick, green!50!black, mark=triangle*, mark size=3, mark options={solid}]
table {%
0.25 0.00278724032049527
0.125 0.000334202224078471
0.0625 4.08050180646313e-05
0.03125 5.04122116280112e-06
0.015625 6.26486578997253e-07
};
\addplot [semithick, blue, mark=square*, mark size=3, mark options={solid}]
table {%
0.25 0.00278724032590327
0.125 0.00033420221098342
0.0625 4.08050179588413e-05
0.03125 5.04122115673546e-06
0.015625 6.26486568575368e-07
};
\addplot [semithick, red, dashed, mark=*, mark size=3, mark options={solid}]
table {%
0.25 0.00343908309191402
0.125 0.00042275700626943
0.0625 5.22839505798613e-05
0.03125 6.49963353352475e-06
0.015625 8.10214310622748e-07
};
\addplot [semithick, green!50!black, dashed, mark=triangle*, mark size=3, mark options={solid}]
table {%
0.25 0.0972164739357568
0.125 0.0102884001651091
0.0625 0.00117645455147224
0.03125 0.000143385546607913
0.015625 1.7849839397655e-05
};
\addplot [semithick, blue, dashed, mark=square*, mark size=3, mark options={solid}]
table {%
0.25 97.155673859173
0.125 10.2797159650293
0.0625 1.175292765653
0.03125 0.143238229193515
0.015625 0.0178314508269547
};
\end{axis}

\end{tikzpicture}}
        \caption{Velocity field}
    \end{subfigure}
    \hfill
    \begin{subfigure}[b]{0.49\textwidth}
        \centering
        \resizebox{\textwidth}{!}{
\begin{tikzpicture}

\begin{axis}[
legend style={at={(0.5, -0.25)}, anchor=north, legend columns=5},
log basis x={10},
log basis y={10},
tick align=outside,
tick pos=left,
x dir=reverse,
x grid style={white!69.0196078431373!black},
xlabel={\(\displaystyle h\)},
xmin=0.01, xmax=0.4,
xmode=log,
xtick style={color=black},
y grid style={white!69.0196078431373!black},
ylabel={\(\displaystyle e_p\)},
ymin=1-07, ymax=0.1,
ymode=log,
ytick style={color=black}
]
\addlegendimage{black, line legend}
\addlegendentry{PM \;}
\addlegendimage{black, line legend, dashed}
\addlegendentry{RW \; $\nu = $ \;}
\addlegendimage{red, only marks, mark=*, mark size=3}
\addlegendentry{\; $1$ \;}
\addlegendimage{green!50!black, only marks, mark=triangle*, mark size=3}
\addlegendentry{\; $10^{-3}$ \;}
\addlegendimage{blue, only marks, mark=square*, mark size=3}
\addlegendentry{\; $10^{-6}$}
\addplot [semithick, red, mark=*, mark size=3, mark options={solid}]
table {%
0.25 0.0488910973650589
0.125 0.0108933999819412
0.0625 0.00258948666478103
0.03125 0.000631457396590214
0.015625 0.000155893776610266
};
\addplot [semithick, green!50!black, mark=triangle*, mark size=3, mark options={solid}]
table {%
0.25 0.0014911985235232
0.125 0.000188233266139664
0.0625 2.3682541658488e-05
0.03125 3.01111408516601e-06
0.015625 3.99722463662621e-07
};
\addplot [semithick, blue, mark=square*, mark size=3, mark options={solid}]
table {%
0.25 0.00149039757139806
0.125 0.000187917885658097
0.0625 2.35405585974746e-05
0.03125 2.94416009361973e-06
0.015625 3.68069746324576e-07
};
\addplot [semithick, red, dashed, mark=*, mark size=3, mark options={solid}]
table {%
0.25 0.0259541202728887
0.125 0.00581477900404244
0.0625 0.00136027897822479
0.03125 0.000327918954154924
0.015625 8.04403917363363e-05
};
\addplot [semithick, green!50!black, dashed, mark=triangle*, mark size=3, mark options={solid}]
table {%
0.25 0.0188659550855638
0.125 0.00474453076531723
0.0625 0.00118799820484939
0.03125 0.00029709360893791
0.015625 7.42750136640536e-05
};
\addplot [semithick, blue, dashed, mark=square*, mark size=3, mark options={solid}]
table {%
0.25 0.0188659466658425
0.125 0.00474452957435822
0.0625 0.00118799802008048
0.03125 0.000297093576513079
0.015625 7.42750072426525e-05
};
\end{axis}

\end{tikzpicture}}
        \caption{Pressure field}
    \end{subfigure}
    \caption{Velocity ($e_u$) and pressure ($e_p$) errors as a
    function of $h$ with $k=2$ for the present method (PM) and the
    method presented in \cite{Rhebergen2018a} (RW). The error in the
    velocity field is independent of the viscosity for the present
    method, in contrast to the method from~\cite{Rhebergen2018a}.}
    \label{fig:square_2_press_rob}
\end{figure}
For the present method, the error in the velocity does not depend on the
viscosity, which is in agreement with the pressure robust estimate given
in \cref{th:error_estimate}. By contrast, for the method from
\cite{Rhebergen2018a}, the error in the velocity increases by several
orders of magnitude as the viscosity is varied. This is because the
computed velocity field is not exactly divergence-free, and thus the
velocity error estimate contains the norm of the error in the pressure
field scaled by the reciprocal of the viscosity \cite{Rhebergen2020}. In
the case of the pressure field, for the present method, the error
reduces as the viscosity decreases from $1$ to $10^{-3}$, but remains
constant when the viscosity is further decreased to $10^{-6}$. This is
consistent with the pressure error estimate given in
\cref{th:error_estimate}, which contains the $H^k(\Omega)$-norm of the
exact pressure field and the $H^{k+1}(\Omega)$ norm of the exact
velocity field scaled by the viscosity. For large enough values of
$\nu$, the velocity term dominates and therefore decreasing the
viscosity reduces the error. However, when $\nu$ is small enough, the
pressure term dominates, so reducing $\nu$ further has little effect. In
fact, for small viscosities, the pressure converges at a rate $k + 1$
instead of $k$. This is due to two factors: firstly, since the pressure
term dominates, the rate of convergence is not limited by the
$\mathcal{O}\del[0]{h^k}$ velocity term. Secondly, for this problem, the
exact pressure field is sufficiently regular to be in $H^{k +
1}(\Omega)$, and since the pressure space contains all polynomials of
degree at most $k$, the approximate pressure field converges at the rate
$k + 1$ (see \cite{Arnold2002b} and \cite[Remark 6.23]{DiPietro2012} for
more information). For the method from \cite{Rhebergen2018a}, varying
the viscosity has little effect on the error in the pressure field.

\subsection{A hydrostatic problem}

We consider the hydrostatic problem from \cite{John2017}. Let a fluid of
unit viscosity occupy a unit square container with no slip between the
fluid and the fixed container walls. The fluid is subjected to the force
\begin{equation}
    f(x) =
    \begin{pmatrix}
        0 \\
        c(3x_2^2 - x_2 + 1)
    \end{pmatrix},
\end{equation}
where $c \in \mathbb{R}^+$ is a parameter. The exact solution is given by
\begin{equation}
    u(x) = 0 \textnormal{ and } p(x) = c\left(x_2^3 - \frac{x_2^2}{2}
    + x_2 - \frac{7}{12}\right).
\end{equation}
Note that the applied force is exactly balanced by the pressure gradient;
the fluid is in hydrostatic equilibrium and changing the parameter $c$ only
changes the pressure field.

The solution is computed using the present method and a Taylor--Hood
scheme with $k = 2$, $c = 10^4$, and $\nu = 1$ on a trapezium partition
of $\Omega \coloneqq (0, 1)^2$. Homogeneous Dirichlet boundary
conditions are applied on $\partial \Omega$ for the velocity field.
\Cref{fig:hydrostatic_vel} shows the magnitude of the approximate
velocity field computed using each method, and the errors are tabulated
in \cref{tab:hydrostatic}.
\begin{figure}
    \centering
    \begin{subfigure}[b]{0.49\textwidth}
        \centering
        \includegraphics[width=\textwidth]{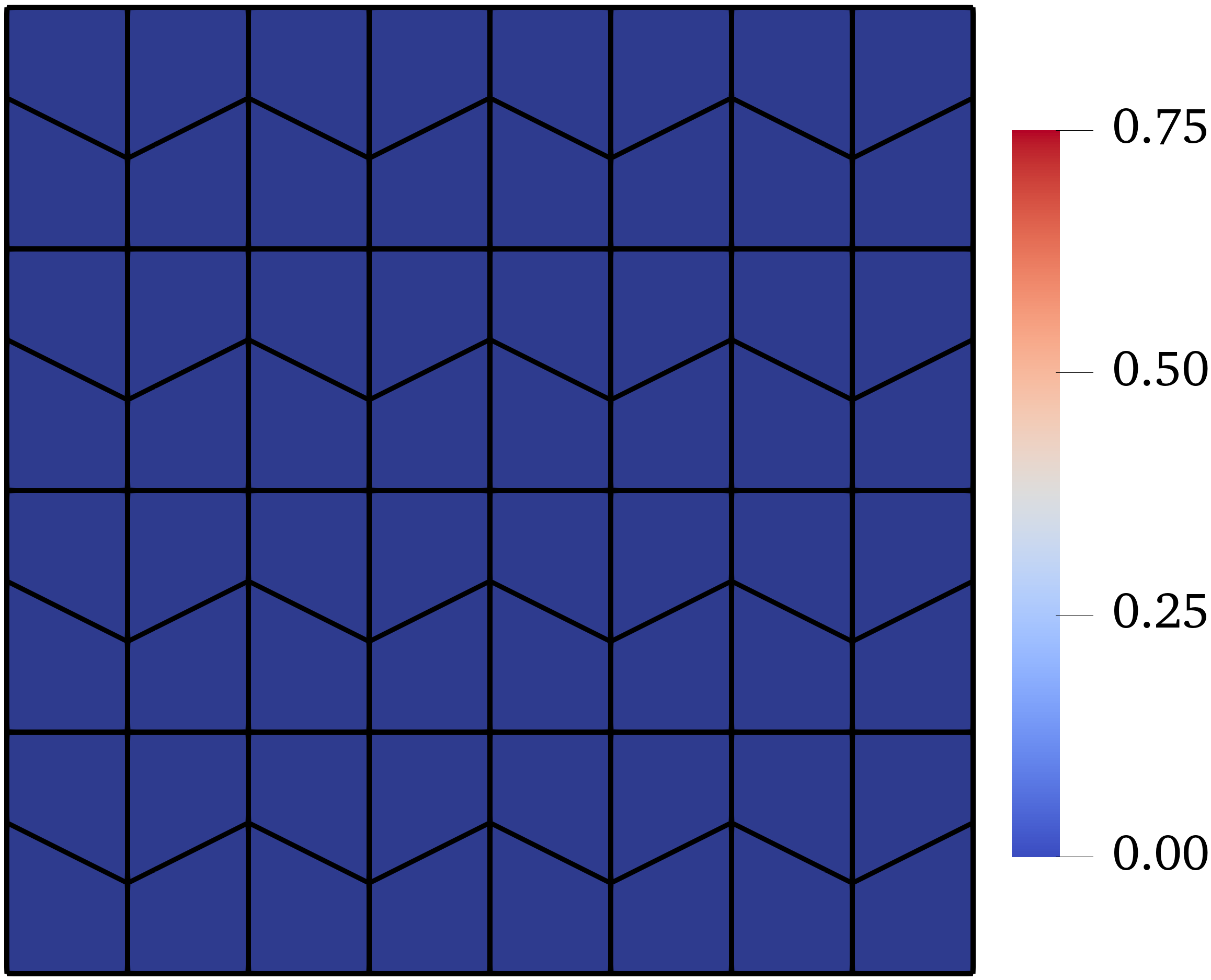}
        \caption{Present method}
    \end{subfigure}
    \hfill
    \begin{subfigure}[b]{0.49\textwidth}
        \centering
        \includegraphics[width=\textwidth]{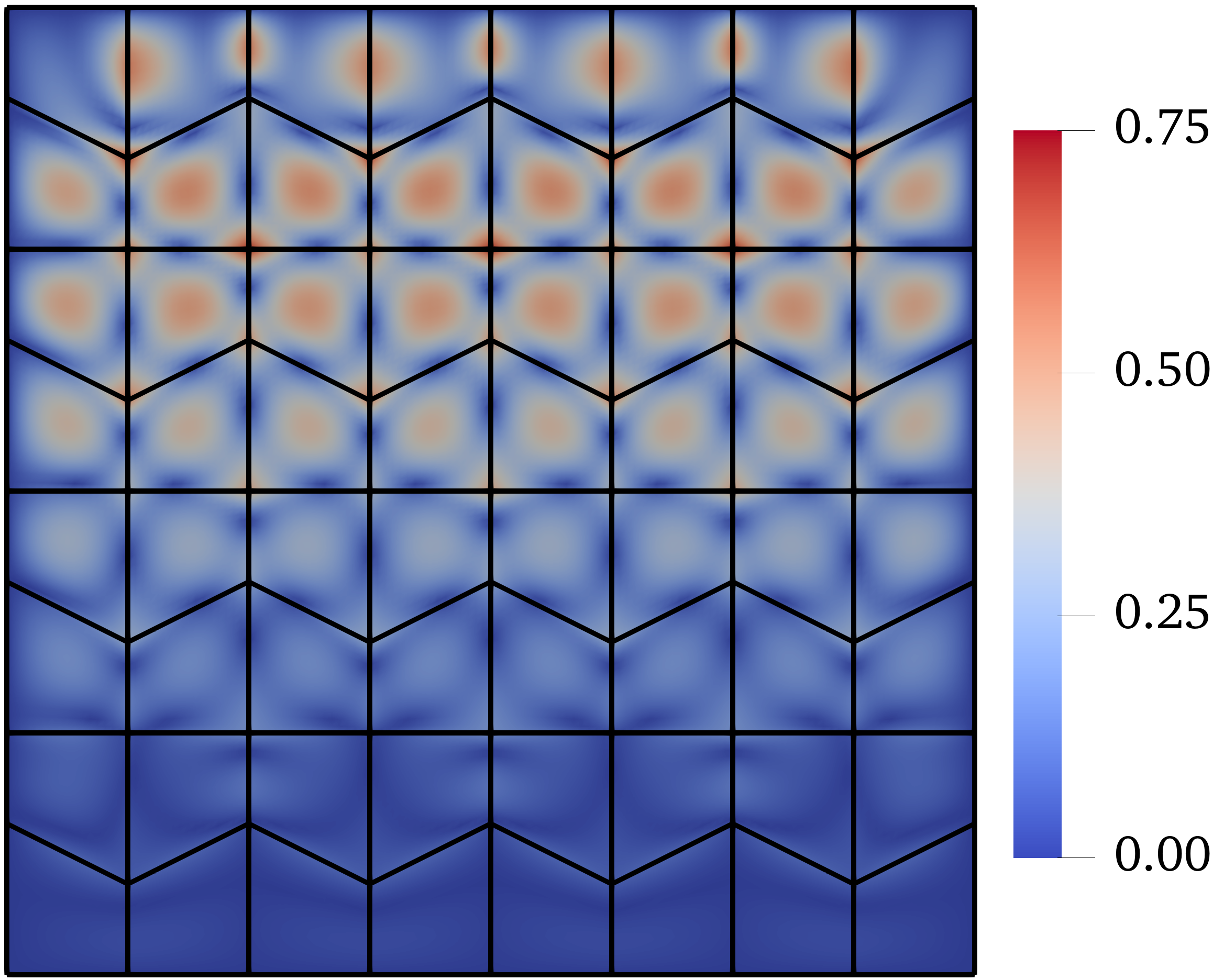}
        \caption{Taylor--Hood}
    \end{subfigure}
    \caption{Magnitude of the computed velocity field for the
    hydrostatic problem with $k = 2$ and $c = 10^4$ for the present
    method and a Taylor--Hood scheme.}
    \label{fig:hydrostatic_vel}
\end{figure}
\begin{table}
\centering
\caption{Computed errors for the hydrostatic problem using the present method and a
Taylor-Hood scheme.}
\label{tab:hydrostatic}
\begin{tabular}{|c|c|c|c|c|}
\hline
Method & $e_u$ & $e_p$ & $e_{\nabla \cdot u}$ & $e_{\llbracket u \rrbracket}$ \\ \hline
Present & $\num{1.10E-13}$ & $\num{0.447}$ & $\num{6.54E-13}$ & $\num{4.56E-14}$\\ \hline
Taylor-Hood & $\num{0.226}$ & $\num{19.3}$ & $\num{8.37}$ & $\num{1.57E-16}$\\ \hline
\end{tabular}
\end{table}
The velocity field computed by the present method is exact to machine
precision. By contrast, spurious flow can be seen in the velocity field
computed by the Taylor--Hood scheme, which unlike the present method,
does not preserve the invariance property of the Stokes equations.

\subsection{A cylindrical bearing problem}

We now present an example involving a curved boundary, which we
approximate with curved elements. Whilst we showed in
\cref{sec:ensuring_div_free} that discretely divergence-free functions
are exactly divergence-free for the curved elements we use in this
example, our theoretical analysis used to obtain the pressure robust
error estimate in \cref{sec:press_rob_error_est} does not account for
the fact that $\Omega_h$ does not coincide exactly with $\Omega$, and we
have not proved \crefrange{ass:consistency}{ass:approx_props} for these
elements. Despite this, it will be seen experimentally that the
invariance property is preserved.

Consider a two-dimensional domain $\Omega \coloneqq \left\{x_1^2 + x_2^2
< r_{\rm{o}}^2\right\} \setminus \left\{ x_1^2 + (x_2 + e)^2 <
r_{\rm{i}}^2 \right\}$ bounded by inner and outer circles of radii
$r_{\rm{i}}$ and $r_{\rm{o}}$ respectively. Let $e$ denote the offset
between their centres in the $x_2$-direction. On the inner and outer
boundaries, the tangential component of the velocity is prescribed as
$u_{\rm{i}}$ and $u_{\rm{o}}$ respectively, and the normal component is
set to zero. The applied force is taken to be zero. This type of flow
has an analytical solution which can be found in \cite{Wannier1950}.

We take $r_{\rm{i}} = 0.7$, $r_{\rm{o}} = 1$, $e = 0.15$, $u_{\rm{i}} =
1$, and $u_{\rm{o}} = 0$, and use an unstructured mesh containing both
quadrilateral and triangular cells. Polynomial geometric mappings of
degree four are used to curve the cells on the boundary to ensure it is
represented with sufficient accuracy. The computed solution for $k = 3$
is presented in \cref{fig:wannier_sol}.
\begin{figure}
    \centering
    \begin{subfigure}[b]{0.49\textwidth}
        \centering
        \includegraphics[width=\textwidth]{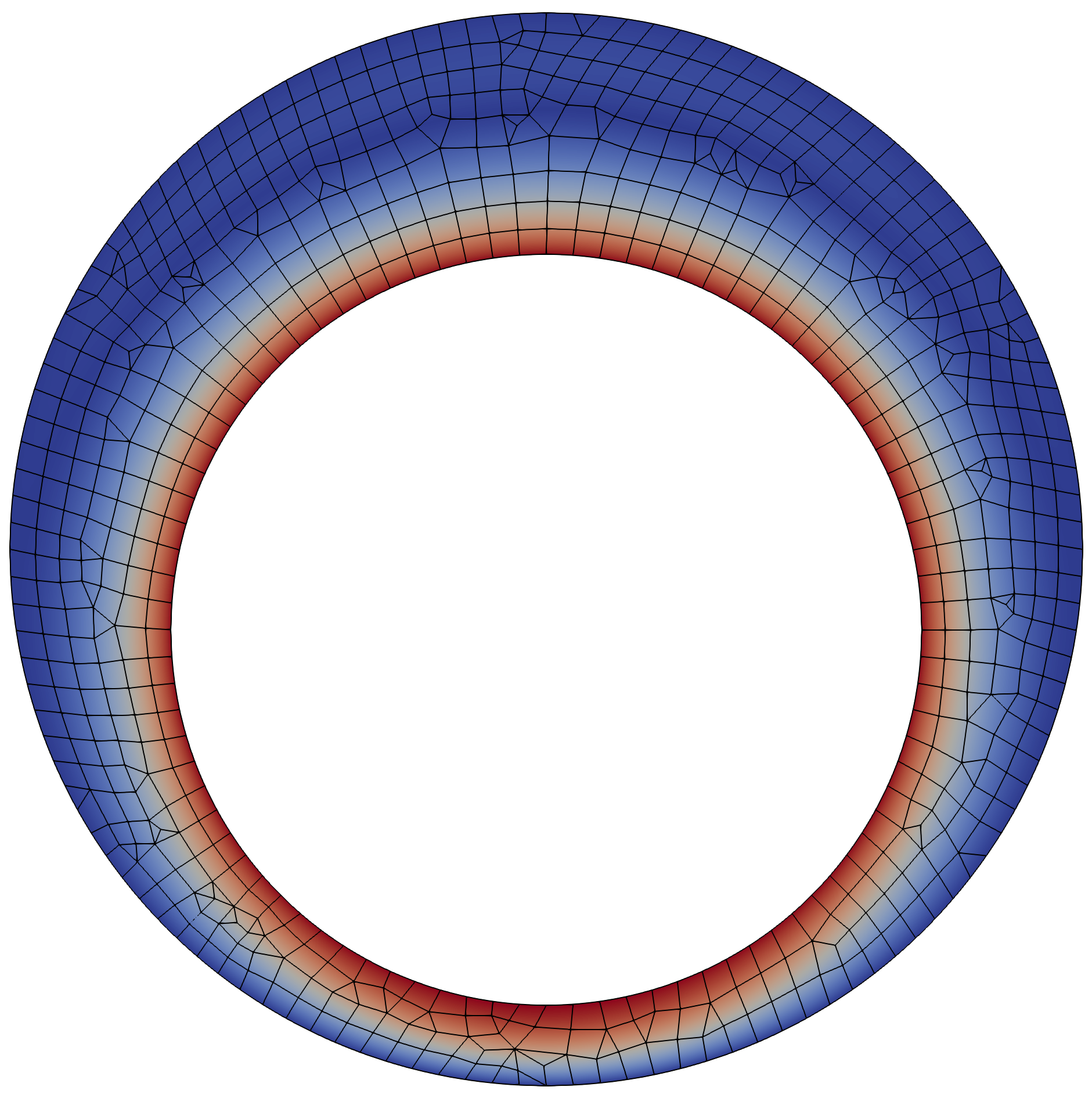}
        \caption{Velocity field magnitude}
    \end{subfigure}
    \hfill
    \begin{subfigure}[b]{0.49\textwidth}
        \centering
        \includegraphics[width=\textwidth]{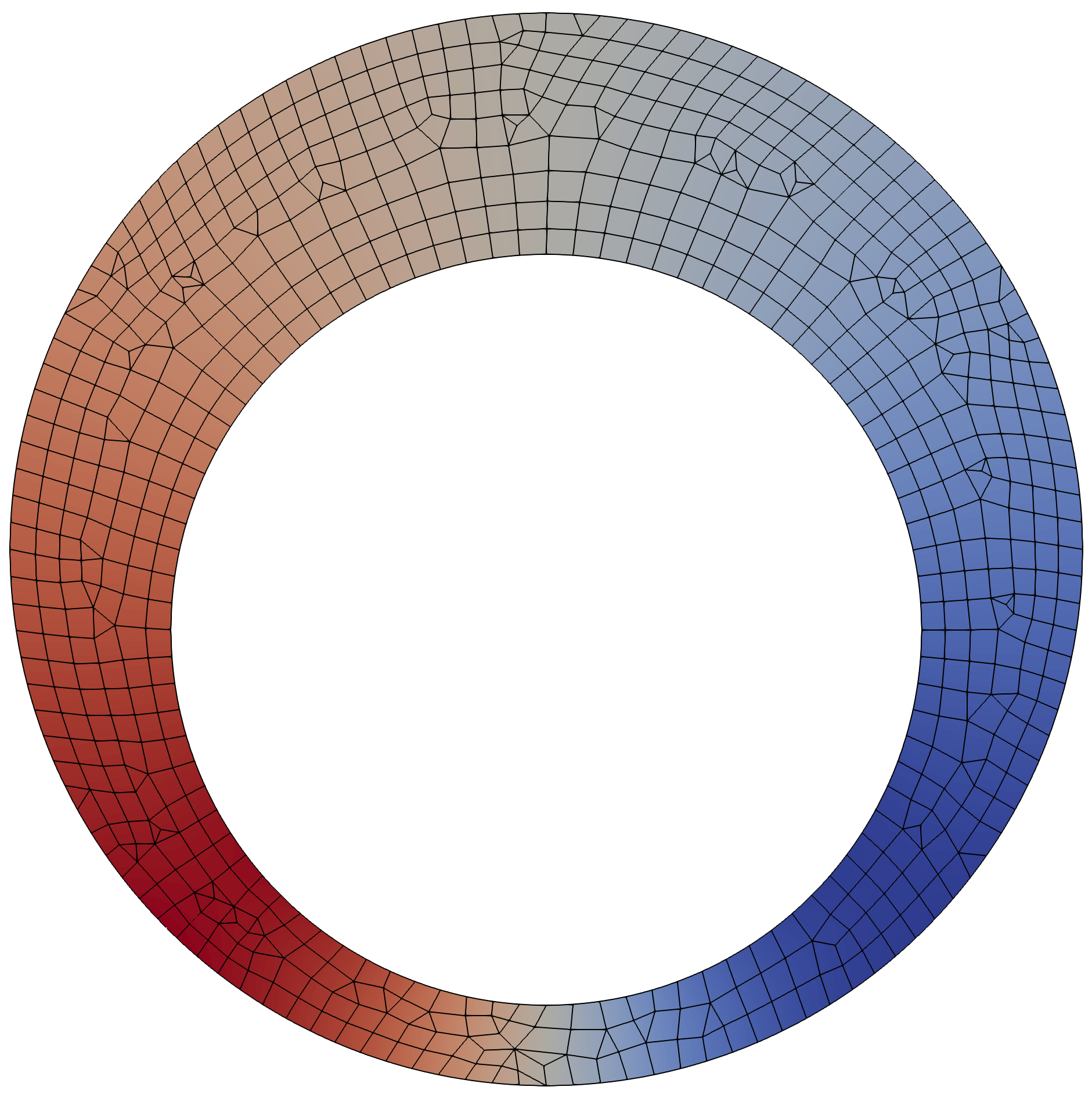}
        \caption{Pressure field}
    \end{subfigure}
    \caption{The computed velocity magnitude and pressure fields for
    the cylindrical bearing problem with $k = 3$.}
    \label{fig:wannier_sol}
\end{figure}
\Cref{fig:wannier_conv_u} shows that the computed velocity converges at
the rate $k + 1$.
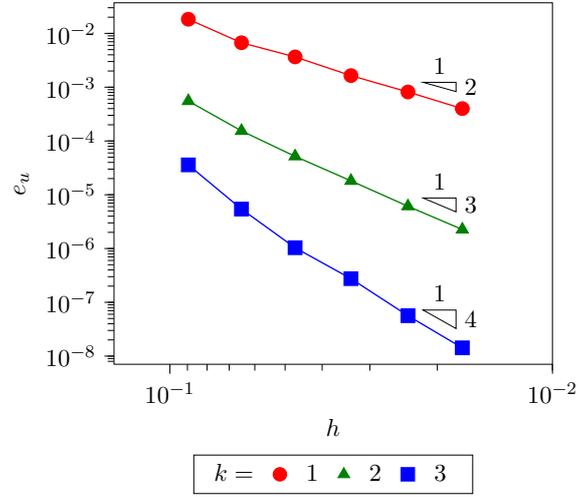
\begin{figure}
    \centering
    \resizebox{0.49\textwidth}{!}{
\begin{tikzpicture}

\begin{axis}[
legend style={at={(0.5, -0.25)}, anchor=north, legend columns=5},
log basis x={10},
log basis y={10},
tick align=outside,
tick pos=left,
x dir=reverse,
x grid style={white!69.0196078431373!black},
xlabel={\(\displaystyle h\)},
xmin=0.01, xmax=0.14,
xmode=log,
xtick style={color=black},
y grid style={white!69.0196078431373!black},
ylabel={\(\displaystyle e_u\)},
ymin=7.05073273755758e-09, ymax=0.0370480579851576,
ymode=log,
ytick style={color=black}
]
\addlegendimage{white, only marks, mark=*}
\addlegendentry{$k = $ \;}
\addlegendimage{red, only marks, mark=*, mark size=3}
\addlegendentry{\; $1$ \;}
\addlegendimage{green!50!black, only marks, mark=triangle*, mark size=3}
\addlegendentry{\; $2$ \;}
\addlegendimage{blue, only marks, mark=square*, mark size=3}
\addlegendentry{\; $3$ \;}
\addplot [semithick, red, mark=*, mark size=3, mark options={solid}]
table {%
0.089504532099007 0.018335246669856
0.0650190200165734 0.00670486516582394
0.0470750592981567 0.00365545127497897
0.0336259915320607 0.00164306360802903
0.0238657539209129 0.000815666647291653
0.0172044586123659 0.000399835959472358
};
\addplot [semithick, green!50!black, mark=triangle*, mark size=3, mark options={solid}]
table {%
0.089504532099007 0.000552480779682936
0.0650190200165734 0.00015408107888004
0.0470750592981567 5.17950876163116e-05
0.0336259915320607 1.81283496889619e-05
0.0238657539209129 6.11257850776583e-06
0.0172044586123659 2.25689139723126e-06
};
\addplot [semithick, blue, mark=square*, mark size=3, mark options={solid}]
table {%
0.089504532099007 3.59289687957562e-05
0.0650190200165734 5.41521240512503e-06
0.0470750592981567 1.03502641263977e-06
0.0336259915320607 2.75463933202373e-07
0.0238657539209129 5.63885525291281e-08
0.0172044586123659 1.42466561809792e-08
};
\logLogSlopeTriangle{0.78}{0.075}{0.78}{2}{black};
\logLogSlopeTriangle{0.78}{0.075}{0.46}{3}{black};
\logLogSlopeTriangle{0.78}{0.075}{0.15}{4}{black};
\end{axis}

\end{tikzpicture}}
    \caption{The $L^2(\Omega)$-norm of the error in the velocity
    field as a function of $h$. The convergence rate of the velocity
    is $k + 1$.}
    \label{fig:wannier_conv_u}
\end{figure}
If curved cells had not been used on the boundary, values of $k$ larger
than $1$ would not have increased the rate of convergence of the
velocity field due to the poor geometric approximation.
\Cref{fig:wannier_conv_div_jump} shows $e_{\nabla \cdot u}$ and
$e_{\llbracket u \rrbracket}$ for the present method and method from
\cite{Rhebergen2018a} as a function of~$h$.
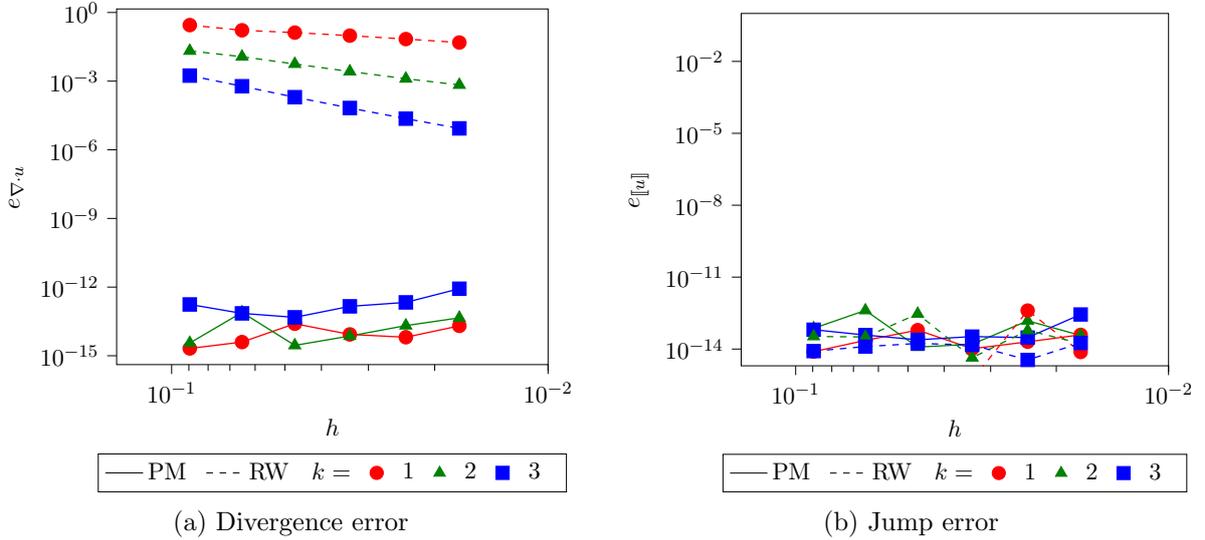
\begin{figure}
    \begin{subfigure}[b]{0.49\textwidth}
        \centering
        \resizebox{\textwidth}{!}{
\begin{tikzpicture}

\begin{axis}[
legend style={at={(0.5, -0.25)}, anchor=north, legend columns=5},
log basis x={10},
log basis y={10},
tick align=outside,
tick pos=left,
x dir=reverse,
x grid style={white!69.0196078431373!black},
xlabel={\(\displaystyle h\)},
xmin=0.01, xmax=0.14,
xmode=log,
xtick style={color=black},
y grid style={white!69.0196078431373!black},
ylabel={\(\displaystyle e_{\nabla \cdot u}\)},
ymin=4.1035249681987e-16, ymax=1.42922529226321,
ymode=log,
ytick style={color=black}
]
\addlegendimage{black, line legend}
\addlegendentry{PM \;}
\addlegendimage{black, line legend, dashed}
\addlegendentry{RW \; $k = $ \;}
\addlegendimage{red, only marks, mark=*, mark size=3}
\addlegendentry{\; $1$ \;}
\addlegendimage{green!50!black, only marks, mark=triangle*, mark size=3}
\addlegendentry{\; $2$ \;}
\addlegendimage{blue, only marks, mark=square*, mark size=3}
\addlegendentry{\; $3$ \;}
\addplot [semithick, red, mark=*, mark size=3, mark options={solid}]
table {%
0.089504532099007 2.08741626273388e-15
0.0650190200165734 3.96143284153601e-15
0.0470750592981567 2.50798879690285e-14
0.0336259915320607 8.55788147653836e-15
0.0238657539209129 6.43862660362055e-15
0.0172044586123659 2.0205964190133e-14
};
\addplot [semithick, green!50!black, mark=triangle*, mark size=3, mark options={solid}]
table {%
0.089504532099007 3.59943591909253e-15
0.0650190200165734 7.84195546769528e-14
0.0470750592981567 2.83720220397926e-15
0.0336259915320607 7.28915710720079e-15
0.0238657539209129 2.07419095132628e-14
0.0172044586123659 4.53039156473309e-14
};
\addplot [semithick, blue, mark=square*, mark size=3, mark options={solid}]
table {%
0.089504532099007 1.74395274589428e-13
0.0650190200165734 7.11053960370938e-14
0.0470750592981567 4.76536502940793e-14
0.0336259915320607 1.44029668466045e-13
0.0238657539209129 2.14542165773416e-13
0.0172044586123659 8.48047178153246e-13
};
\addplot [semithick, red, dashed, mark=*, mark size=3, mark options={solid}]
table {%
0.089504532099007 0.280962727783963
0.0650190200165734 0.166809112229147
0.0470750592981567 0.131887031689049
0.0336259915320607 0.0972617547772568
0.0238657539209129 0.0688201503310766
0.0172044586123659 0.0486307830595266
};
\addplot [semithick, green!50!black, dashed, mark=triangle*, mark size=3, mark options={solid}]
table {%
0.089504532099007 0.020981020208005
0.0650190200165734 0.0116621028968149
0.0470750592981567 0.00560136260747251
0.0336259915320607 0.00263515139446603
0.0238657539209129 0.00127463664591813
0.0172044586123659 0.000682628811992352
};
\addplot [semithick, blue, dashed, mark=square*, mark size=3, mark options={solid}]
table {%
0.089504532099007 0.00174335260976226
0.0650190200165734 0.000597762655845057
0.0470750592981567 0.000198853399177322
0.0336259915320607 6.70530403964251e-05
0.0238657539209129 2.29032232026655e-05
0.0172044586123659 8.61318033834117e-06
};
\end{axis}

\end{tikzpicture}}
        \caption{Divergence error}
    \end{subfigure}
    \hfill
    \begin{subfigure}[b]{0.49\textwidth}
        \centering
        \resizebox{\textwidth}{!}{
\begin{tikzpicture}

\begin{axis}[
legend style={at={(0.5, -0.25)}, anchor=north, legend columns=5},
log basis x={10},
log basis y={10},
tick align=outside,
tick pos=left,
x dir=reverse,
x grid style={white!69.0196078431373!black},
xlabel={\(\displaystyle h\)},
xmin=0.01, xmax=0.14,
xmode=log,
xtick style={color=black},
y grid style={white!69.0196078431373!black},
ylabel={\(\displaystyle e_{\llbracket u \rrbracket}\)},
ymin=2e-15, ymax=1,
ymode=log,
ytick style={color=black}
]
\addlegendimage{black, line legend}
\addlegendentry{PM \;}
\addlegendimage{black, line legend, dashed}
\addlegendentry{RW \; $k = $ \;}
\addlegendimage{red, only marks, mark=*, mark size=3}
\addlegendentry{\; $1$ \;}
\addlegendimage{green!50!black, only marks, mark=triangle*, mark size=3}
\addlegendentry{\; $2$ \;}
\addlegendimage{blue, only marks, mark=square*, mark size=3}
\addlegendentry{\; $3$ \;}
\addplot [semithick, red, mark=*, mark size=3, mark options={solid}]
table {%
0.089504532099007 7.78276748747794e-15
0.0650190200165734 2.29459159550089e-14
0.0470750592981567 6.17107299003714e-14
0.0336259915320607 1.03383757546775e-14
0.0238657539209129 1.99836406613366e-14
0.0172044586123659 3.91760771102905e-14
};
\addplot [semithick, green!50!black, mark=triangle*, mark size=3, mark options={solid}]
table {%
0.089504532099007 7.32018236882611e-14
0.0650190200165734 4.08491903838833e-13
0.0470750592981567 1.20502998010196e-14
0.0336259915320607 1.58957789762669e-14
0.0238657539209129 1.50976714003435e-13
0.0172044586123659 3.6997580097925e-14
};
\addplot [semithick, blue, mark=square*, mark size=3, mark options={solid}]
table {%
0.089504532099007 6.4110735344958e-14
0.0650190200165734 3.82914773472001e-14
0.0470750592981567 2.43047942974773e-14
0.0336259915320607 3.31010250008373e-14
0.0238657539209129 3.0107710875623e-14
0.0172044586123659 2.76758985764058e-13
};
\addplot [semithick, red, dashed, mark=*, mark size=3, mark options={solid}]
table {%
0.089504532099007 3.34457049254153e-16
0.0650190200165734 3.48972461813443e-16
0.0470750592981567 4.48113520419446e-16
0.0336259915320607 5.16017649590693e-16
0.0238657539209129 4.04153809033743e-13
0.0172044586123659 7.48253137974094e-15
};
\addplot [semithick, green!50!black, dashed, mark=triangle*, mark size=3, mark options={solid}]
table {%
0.089504532099007 3.36271503990098e-14
0.0650190200165734 3.17286050269835e-14
0.0470750592981567 2.9350692508445e-13
0.0336259915320607 4.34741666911224e-15
0.0238657539209129 6.00623439115477e-14
0.0172044586123659 1.71385270373076e-14
};
\addplot [semithick, blue, dashed, mark=square*, mark size=3, mark options={solid}]
table {%
0.089504532099007 8.19331623082884e-15
0.0650190200165734 1.26700001964335e-14
0.0470750592981567 1.70826088406305e-14
0.0336259915320607 1.48649845981374e-14
0.0238657539209129 3.51592972351656e-15
0.0172044586123659 1.78797004385566e-14
};
\end{axis}

\end{tikzpicture}}
        \caption{Jump error}
    \end{subfigure}
    \caption{The divergence and jump errors as a function of $h$ for the
    present method (PM) and the method from \cite{Rhebergen2018a} (RW).
    The present method conserves mass exactly, the method from
    \cite{Rhebergen2018a} does not.}
    \label{fig:wannier_conv_div_jump}
\end{figure}
Once again, the present method gives a velocity field that is exactly
divergence-free to machine precision despite the presence of curved
elements, which is in agreement with the theory presented in
\cref{sec:ensuring_div_free}. By contrast, the method from
\cite{Rhebergen2018a} produces a velocity field that is $H(\rm{div};
\Omega)$-conforming to machine precision but not exactly
divergence-free.

Finally, we demonstrate that, despite the presence of curved elements in
this test case, the present method preserves the invariance property of
the Stokes equations at the discrete level. Let the source function be
modified to
\begin{equation}
    f(x) = c \nabla \left( \sin(\pi x_2) \right),
\end{equation}
where $c \in \mathbb{R}^+$ is a parameter. Since $f$ is a gradient
field, it is irrotational and therefore is exactly balanced by the
pressure gradient in the continuous problem, leaving the velocity field
unchanged \cite{John2017}. The solution was computed for the present
method and the method from \cite{Rhebergen2018a} with $k = 2$ and $c$
ranging from $1$ to $10^6$. The results are presented in
\cref{fig:wannier_press_rob}.
\begin{figure}
    \centering
    \resizebox{0.49\textwidth}{!}{
\begin{tikzpicture}

\begin{axis}[
legend style={at={(0.5, -0.25)}, anchor=north, legend columns=5},
log basis x={10},
log basis y={10},
tick align=outside,
tick pos=left,
x dir=reverse,
x grid style={white!69.0196078431373!black},
xlabel={\(\displaystyle h\)},
xmin=0.01, xmax=0.14,
xmode=log,
xtick style={color=black},
y grid style={white!69.0196078431373!black},
ylabel={\(\displaystyle e_u\)},
ymin=1.06303299482219e-06, ymax=16.5864292366663,
ymode=log,
ytick style={color=black}
]
\addlegendimage{black, line legend}
\addlegendentry{PM \;}
\addlegendimage{black, line legend, dashed}
\addlegendentry{RW \; $c = $ \;}
\addlegendimage{red, only marks, mark=*, mark size=3}
\addlegendentry{\; $1$ \;}
\addlegendimage{green!50!black, only marks, mark=triangle*, mark size=3}
\addlegendentry{\; $10^{3}$ \;}
\addlegendimage{blue, only marks, mark=square*, mark size=3}
\addlegendentry{\; $10^{6}$}
\addplot [semithick, red, mark=*, mark size=3, mark options={solid}]
table {%
0.089504532099007 0.000552480779684595
0.0650190200165734 0.000154081078878173
0.0470750592981567 5.17950876164871e-05
0.0336259915320607 1.81283496890377e-05
0.0238657539209129 6.11257850781714e-06
0.0172044586123659 2.25689139713563e-06
};
\addplot [semithick, green!50!black, mark=triangle*, mark size=3, mark options={solid}]
table {%
0.089504532099007 0.000552480780863935
0.0650190200165734 0.000154081078959987
0.0470750592981567 5.17950876184421e-05
0.0336259915320607 1.81283496886123e-05
0.0238657539209129 6.11257850774187e-06
0.0172044586123659 2.25689139723167e-06
};
\addplot [semithick, blue, mark=square*, mark size=3, mark options={solid}]
table {%
0.089504532099007 0.000552481962763478
0.0650190200165734 0.000154081162803288
0.0470750592981567 5.17950897800648e-05
0.0336259915320607 1.81283494590692e-05
0.0238657539209129 6.11257851685209e-06
0.0172044586123659 2.25689139509503e-06
};
\addplot [semithick, red, dashed, mark=*, mark size=3, mark options={solid}]
table {%
0.089504532099007 0.000613217590498487
0.0650190200165734 0.000192896712053565
0.0470750592981567 6.6352128285796e-05
0.0336259915320607 2.26020534574948e-05
0.0238657539209129 7.59248400691962e-06
0.0172044586123659 2.83838732634471e-06
};
\addplot [semithick, green!50!black, dashed, mark=triangle*, mark size=3, mark options={solid}]
table {%
0.089504532099007 0.00784283441893469
0.0650190200165734 0.00205971054837052
0.0470750592981567 0.000677715333488168
0.0336259915320607 0.000227658887872689
0.0238657539209129 6.41568003995157e-05
0.0172044586123659 2.04460744522394e-05
};
\addplot [semithick, blue, dashed, mark=square*, mark size=3, mark options={solid}]
table {%
0.089504532099007 7.81248117795112
0.0650190200165734 2.05279500113836
0.0470750592981567 0.675073214000505
0.0336259915320607 0.226908326981389
0.0238657539209129 0.0636968922592336
0.0172044586123659 0.020250920453047
};
\end{axis}

\end{tikzpicture}}
    \caption{The error in the velocity field as a function of $h$ for
    the present method (PM) and the method from \cite{Rhebergen2018a}
    (RW). The error in the velocity is independent of $c$ for the
    present method, in contrast to the method from
    \cite{Rhebergen2018a}.}
    \label{fig:wannier_press_rob}
\end{figure}
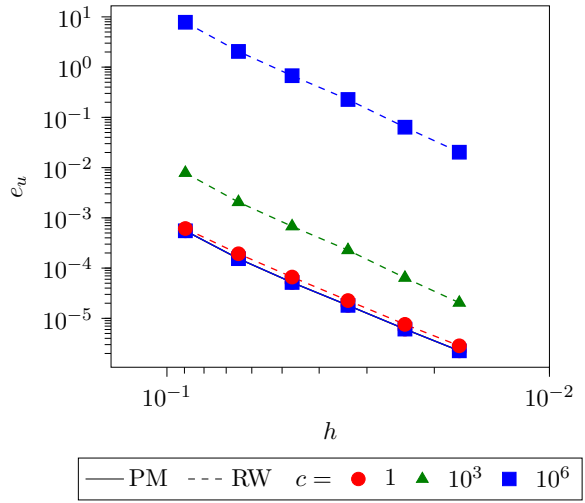
For the present method, the velocity field is unchanged as $c$ is
varied, which is in accordance with the physically correct behaviour. By
contrast, the method from \cite{Rhebergen2018a} does not enjoy this
property.

\section{Conclusions} \label{sec:conclusion}

We have developed and analysed a hybridized discontinuous Galerkin
method for the Stokes problem on non-affine cells that maintains an
exactly divergence-free velocity field. Elements are mapped using the
contravariant Piola transform, ensuring that key properties hold on both
the reference and physical (mapped) element. Optimal, pressure robust
error estimates for the velocity field are proved for flat-faced
quadrilateral cells. Our theoretical analysis is consistent with our
numerical experiments, which demonstrate that invariance properties are
preserved and the analytical error estimates achieved. The method
developed in this paper opens up the application of high-order, pressure
robust methods for problems with complex geometries, with future
possibilities being analysis of high aspect ratio tensor-product cell
meshes for boundary layers and the generalisation of the analysis
results to other non-affine cell types.

\subsection*{Acknowledgements}

Support from the Engineering and Physical Sciences Research Council for
JPD (EP/L015943/1) and GNW (EP/W026635/1, EP/W00755X/1, EP/S005072/1) is
gratefully acknowledged. SR gratefully acknowledges support from the
Natural Sciences and Engineering Research Council of Canada through the
Discovery Grant program (RGPIN-2023-03237). We further thank Aaron
Baier-Reinio for discussions on the proof of \cref{lem:stability_b_2}
resulting in an improved inf-sup constant.

\bibliographystyle{unsrtnat}
\bibliography{references}
\end{document}